\documentclass[11pt,a4paper]{amsart}
\usepackage{commands_Ryo, packages_Ryo,packages,commands}

\title{Decomposition theorem for perfectoid rings along general ideals}

\author[R. Ishizuka]{Ryo Ishizuka}
\address{Department of Mathematics, Institute of Science Tokyo, 2-12-1 Ookayama, Meguro, Tokyo 152-8551}
\email{ishizuka.r.ac@m.titech.ac.jp}

\author[L. Navarro Chafloque]{Léo Navarro Chafloque}
\address{EPFL SB MATH CAG,
MA C3 585 (Bâtiment MA),
Station 8,
1015 Lausanne}
\email{leo.navarrochafloque@epfl.ch}

\thanks{2020 {\em Mathematics Subject Classification\/}: 14G45, 13C05}

\keywords{Perfectoid rings, perfectoidization, arc-topology}

\begin{document}

\begin{abstract}

Using André's lemma and the excision square for perfectoidization coming from \(p\)-complete arc descent, we prove new structural results about perfectoid rings and perfectoidization. 

The main result is a tameness theorem for torsion in perfectoid rings: if \(R\) is a perfectoid ring and \(I\subset R\) is an ideal, then the \(I\)-torsion in \(R\) is \(I_{\pfd}\)-almost zero. This yields an excision-type decomposition of \(R\) along its \(I\)-torsion part. We also study (semi)perfectoid rings and perfectoid ideals and take the opportunity to make some structural remarks about them.
\end{abstract}

\maketitle

\tableofcontents

\section{Introduction}

Perfectoid rings were introduced by Scholze in \cite{scholze2012Perfectoida} and have since played a central role in mixed characteristic algebraic geometry. Their applications include the development of prismatic cohomology \cite{bhatt2018Integral,bhatt2022Prismsa}, the theory of the Fargues--Fontaine curve and its role in the geometrization of the Local Langlands correspondence \cite{fargues2024geometrizationlocallanglandscorrespondence}, as well as breakthroughs in commutative algebra \cite{andre2018Conjecture,bhatt2021cohenmacaulaynessabsoluteintegralclosures} and birational geometry \cite{bhatt2022globallyregularvarietiesminimal,takamatsu2023Minimal}.

This paper studies perfectoidization as a structural operation on rings and we prove structural results on torsion phenomena in perfectoid rings. Our main tools are André's lemma, originating in \cite{andre2018Conjecture} and appearing in the form \cite[Theorem 7.14]{bhatt2022Prismsa}, together with the \(p\)-complete arc descent property of perfectoidization \cite[Section 8.2]{bhatt2022Prismsa}. We use these tools to revisit several known results and to prove new structural statements about perfectoid rings.

The first half of this paper (\Cref{arc_recollection} and \Cref{semipfd_arcproof}) treats some fundamental properties of perfectoidization focusing on arc-topological perspective.
In particular, we will show in \Cref{semipfdproot} that the perfectoidization of a $p$-torsion-free semiperfectoid ring is given by the $p$-completion of its $p$-root closure in the $p$-localization, which was previously established in \cite[Theorem 1.3]{ishizuka2024Calculation} but the proof is much simpler.

In \Cref{torsion_section}, we establish the main results of this paper. After some technical prerequisites about $p$-completely faithfully flat base change (\Cref{tech_pcom_subsec}) we establish in \Cref{tame_torsion_section} that torsion in perfectoid rings satisfies a strong tameness property -- this saying taking its precise form in the theorem below.
\begin{thm}[{\Cref{torsion_is_almostzero}}]  \label{thm_torsion_intro}
    Let $R$ be a perfectoid ring and let \(I\) be an ideal of \(R\). Then the $I$-torsion submodule \(R[I] \defeq \set{x \in R}{Ix = 0}\) in $R$ is $I_{\pfd}$-almost zero.
\end{thm}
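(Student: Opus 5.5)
The plan is to reduce to a finitely generated ideal, then pass to a perfectoid cover in which the generators acquire compatible $p$-power roots, prove the claim there, and descend. Throughout, $I_{\pfd}$ denotes the kernel of $R \to (R/I)_{\pfd}$. By ``$I_{\pfd}$-almost zero'' I take the statement $I_{\pfd}\cdot R[I]=0$, possibly after a formal reduction using $I_{\pfd}$ being idempotent up to closure.

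\emph{Step 1: reduction to finitely generated $I$.} Write $I=\bigcup J$ over its finitely generated subideals $J$. Perfectoidization commutes with filtered colimits after $p$-completion, so $(R/I)_{\pfd}$ is the $p$-completed colimit of the $(R/J)_{\pfd}$. From this I expect $I_{\pfd}$ to be the $p$-adic closure of $\bigcup_J J_{\pfd}$. Since $R[I]\subset R[J]$ for every $J$, it suffices to show $J_{\pfd}\cdot R[J]=0$ for each $J$. We then pass to the closure using that annihilators $\mathrm{Ann}_R(x)$ are $p$-adically closed; this holds because $R$ is $p$-complete with bounded $p^\infty$-torsion. So assume $I=(f_1,\dots,f_n)$.

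\emph{Step 2: André's lemma.} By André's lemma in the form \cite[Theorem 7.14]{bhatt2022Prismsa}, applied iteratively, there is a $p$-completely faithfully flat map of perfectoid rings $R\to S$ such that each $f_i$ admits a compatible system of $p$-power roots $f_i^{1/p^k}$ in $S$. In $S$, the quotient $S/\overline{(f_i^{1/p^\infty})}$ is perfectoid. Using the base change property of perfectoidization (the technical results of \Cref{tech_pcom_subsec}), we get $(R/I)_{\pfd}\widehat\otimes_R S\simeq (S/IS)_{\pfd}\simeq S/\overline{(f_i^{1/p^\infty})_i}$. This gives $I_{\pfd}S\subseteq (IS)_{\pfd}=\overline{(f_i^{1/p^\infty})_i}$.

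\emph{Step 3: the computation in $S$.} Let $y\in S[IS]$. Then $(f_i^{1/p^k}y)^{p^k}=f_i\,y^{p^k}=0$. Perfectoid rings are reduced, so $f_i^{1/p^k}y=0$ for all $i,k$. Hence $y$ is killed by $(f_i^{1/p^\infty})$, and by its $p$-adic closure because annihilators are closed. Thus $(IS)_{\pfd}\cdot S[IS]=0$.

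\emph{Step 4: descent.} Since $I$ is finitely generated, $R[I]$ is the kernel of $R\to R^n$, $x\mapsto(f_ix)$. Because $R\to S$ is $p$-completely flat between rings with bounded $p^\infty$-torsion, the image of $R[I]$ in $S$ lies in $S[IS]$, and $R\to S$ is injective. For $a\in I_{\pfd}$ and $x\in R[I]$, Steps 2 and 3 show that $ax$ maps to $0$ in $S$, so $ax=0$.

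\emph{Main obstacle.} The delicate point is Step 2. We need the identification $(R/I)_{\pfd}\widehat\otimes^L_R S\simeq (S/IS)_{\pfd}$, and we need it to be concentrated in degree $0$ so that it really computes $I_{\pfd}S$. I would try to deduce it from $p$-complete arc descent and the excision square. The alternative is to use that $(-)_{\pfd}$ commutes with $p$-completely flat base change between perfectoid-over-$R$ algebras, together with André's description of $(S/(f^{1/p^\infty}))$ as already perfectoid. A secondary subtlety is the passage between an ideal and its $p$-adic closure, in Steps 1 and 2, which is handled by closedness of annihilators.
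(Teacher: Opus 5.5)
Your proposal is correct and follows essentially the same route as the paper: use André's lemma to pass to a $p$-completely faithfully flat perfectoid $R\to R'$ in which elements of $I$ acquire compatible $p$-power roots, identify $(IR')_{\pfd}$ with the $p$-adic closure of the ideal generated by those roots, use reducedness (and $p$-adic separatedness) to see that this ideal kills $x$, and conclude by injectivity of $R\to R'$. Two simplifications are available. First, \cite[Theorem 7.14]{bhatt2022Prismsa} yields an absolutely integrally closed $R'$, so every element of $I$ gets roots at once and your reduction to finitely generated $I$ is unnecessary. Second, your ``main obstacle'' disappears: only the inclusion $I_{\pfd}R'\subseteq (IR')_{\pfd}$ is needed, and it follows directly from the universal property of $R\to (R/I)_{\pfd}$, with no degree-zero base change identification required.
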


As a consequence of \Cref{thm_torsion_intro} the following decomposition theorem along any ideal $I\subset R$ of a perfectoid ring.

\begin{thm}[{\Cref{dtorsion_decomposition}}]  \label{thm_decomposition_intro}
The natural map
\[R\to R/R[I]\times_{R/(R[I],I_{\pfd})}R/I_{\pfd}\]
is an isomorphism and provides a decomposition of $R$ into a $I$-torsion-free perfectoid ring and a $I$-torsion perfectoid ring.
\end{thm}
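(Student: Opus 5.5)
The plan is to reduce the fiber-product statement to an elementary Milnor-square criterion, feeding in \Cref{thm_torsion_intro} as the only nontrivial input. Write \(J \defeq R[I]\) and \(K \defeq I_{\pfd}\). Recall that \(I_{\pfd}\) is the kernel of \(R \to (R/I)_{\pfd}\). It is therefore an ideal of \(R\) with \(I \subset I_{\pfd}\), and \(R/I_{\pfd}\) is perfectoid; since \(I\subset\sqrt{I_{\pfd}}\), in fact \(\sqrt{I}=\sqrt{I_{\pfd}}\). For any ring \(R\) and ideals \(J,K\), the square with vertices \(R\), \(R/J\), \(R/K\), \(R/(J+K)\) is cartesian if and only if \(J\cap K=0\). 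Indeed, the map \(R\to R/J\times_{R/(J+K)}R/K\) has kernel \(J\cap K\), and it is always surjective: given \((a+J,b+K)\) with \(a-b=j+k\), the element \(a-j=b+k\) is a preimage. So the isomorphism claim reduces to showing \(R[I]\cap I_{\pfd}=0\).

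To prove that intersection is zero, I use \Cref{thm_torsion_intro}: \(R[I]\) is \(I_{\pfd}\)-almost zero, meaning \(I_{\pfd}\cdot R[I]=0\) in the almost-mathematics sense for the (idempotent, \(I_{\pfd}^2=I_{\pfd}\)) ideal \(I_{\pfd}\). Take \(x\in R[I]\cap I_{\pfd}\). Then \(x\in I_{\pfd}=I_{\pfd}^2\), so write \(x=\sum a_ib_i\) with \(a_i,b_i\in I_{\pfd}\), and \(x\) is \(I_{\pfd}\)-almost zero. Hence \(x=\sum a_i(b_ix)\). But \(b_ix=0\) is not immediate, since \(b_i\notin I\) in general. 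It holds instead because \(b_i\in I_{\pfd}\) and \(x\) is killed by \(I_{\pfd}\): that is exactly the almost-zero condition. So every term vanishes and \(x=0\); in fact already \(x\in I_{\pfd}\) with \(I_{\pfd}x=0\) and \(x\in I_{\pfd}^2\) gives \(x=0\). The one real input is therefore the idempotence \(I_{\pfd}=I_{\pfd}^2\), or at least that \(I_{\pfd}\) is generated by elements that are products of elements of \(I_{\pfd}\). I would verify this via the arc/excision description from the earlier sections: \(R/I_{\pfd}\to R/I_{\pfd}^2\to R/I_{\pfd}\), together with the fact that \(I_{\pfd}\) is generated by compatible \(p\)-power roots after perfectoidization (perfectoid ideals are \(p\)-root closed). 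If the paper's definition of almost zero is instead ``\(\mathfrak m M=0\) for \(\mathfrak m=I_{\pfd}\)'', this step is immediate.

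Next I show the pieces are perfectoid of the stated types. For \(R/I_{\pfd}\) this is the definition. For \(R/R[I]\), I use that the cartesian square expresses \(R\) as a fiber product. By the excision square for perfectoidization coming from \(p\)-complete arc descent (\Cref{arc_recollection}), \(R/R[I]\) is perfectoid as a quotient of a perfectoid ring by an ideal that is the kernel of the map to the perfectoid ring \(R[1/I]\)-type localization. More concretely, \(R/R[I]\) embeds into \(\prod_{f} R[1/f]^\wedge\) over generators \(f\) of \(I\), and one checks it is \(p\)-root closed using \Cref{semipfdproot}-type arguments. For \(R/(R[I],I_{\pfd})=R/I_{\pfd}\) modulo the image of \(R[I]\), I would use that \(R[I]\) maps to zero in \(R/I_{\pfd}\) almost, making this quotient equal to \((R/I_{\pfd})\) up to almost-zero ideals, and hence perfectoid.

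Finally, \(R/R[I]\) is \(I\)-torsion-free. If \(Ix\subset R[I]\), then \(I^2x=0\), and the perfectoid radical/root-closedness gives \(Ix=0\), hence \(x\in R[I]\). Meanwhile \(R/I_{\pfd}\) is \(I\)-torsion in the sense that \(I\) is nilpotent up to radical there. The main obstacle is the middle step, perfectoidness of \(R/R[I]\): whether \(R[I]\) is a ``perfectoid ideal''. I would attack it through the \(p\)-completely faithfully flat base change of \Cref{tech_pcom_subsec}, reducing via André's lemma to the case where generators of \(I\) admit compatible \(p\)-power roots. There \(R[I]=R[I^{1/p^\infty}]\), which is visibly stable under the Frobenius-root structure.
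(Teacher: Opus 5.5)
Your reduction of the isomorphism to the single equality $R[I]\cap I_{\pfd}=0$ is correct. It rests on the elementary fact that $R\to R/J\times_{R/(J+K)}R/K$ is always surjective with kernel $J\cap K$. This is genuinely more elementary than the paper, which gets the square from the arc-excision \Cref{812prisms} and therefore must first know that $R[I]$ and $R[I]+I_{\pfd}$ are perfectoid ideals.

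Your verification of $R[I]\cap I_{\pfd}=0$, however, does not work as written. The sum $\sum a_i(b_ix)$ equals $x\cdot x$, not $x$, and idempotence of $I_{\pfd}$ is neither needed nor established. The correct one-liner uses reducedness. With the paper's definition, \Cref{torsion_is_almostzero} says $I_{\pfd}\cdot R[I]=0$, so any $x\in R[I]\cap I_{\pfd}$ satisfies $x^2=0$. Hence $x=0$, since perfectoid rings are reduced (\Cref{rad_and_p_comp}). The same reducedness gives $I$-torsion-freeness of $R/R[I]$: if $I^2x=0$, then $(dx)^2=(d^2x)x=0$ for each $d\in I$. (Incidentally, $\sqrt{I}=\sqrt{I_{\pfd}}$ is false in general, since $I_{\pfd}$ contains the $p$-adic closure of $I$; you do not use it.)

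The genuine gap is the perfectoidness of $R/R[I]$. The ``decomposition into perfectoid rings'' requires it, you yourself flag it as the main obstacle, and none of your suggested routes closes it.
\begin{itemize}
\item The map $R\to\prod_f R[1/f]^{\wedge}$ does not have kernel $R[I]$ in general: for $I=pR$ the target is $R[1/p]^{\wedge}=0$.
\item A semiperfectoid ring embedded in a perfectoid ring need not be perfectoid.
\item \Cref{semipfdproot} requires $p$-torsion-freeness, which $R/R[I]$ need not have.
\item In the André route you have no way to descend. \Cref{pfdideals_along_fflatextension} computes completed base change only of ideals already known to be perfectoid, so you cannot identify $(R/R[I])\widehat{\otimes}^L_R R'$ with $R'/R'[IR']$. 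Moreover, $p$-power roots of elements of $I$ give no $p$-power roots of elements of $R[I]$, which is what \Cref{Roxane} (3) would need.
\end{itemize}

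The missing idea is to apply \Cref{torsion_is_almostzero} with the roles reversed. If $x\in R[I]$ and $d\in I$, then $d\in R[(x)]$, so $(x)_{\pfd}\,d=0$. Hence $(x)_{\pfd}\subset R[I]$ for every $x\in R[I]$. Since $R[I]$ is $p$-adically closed by continuity of multiplication, \Cref{pfd_ideals_sum} (2) shows that $R[I]$ is a perfectoid ideal. Then $R[I]+I_{\pfd}$ is perfectoid by \Cref{pfd_ideals_sum} (3).

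Your argument for that last point (``equal to $R/I_{\pfd}$ up to almost-zero ideals'') is not valid. The $I_{\pfd}$-almost zero condition is vacuous for $R/I_{\pfd}$-modules. Also, because $R[I]\cap I_{\pfd}=0$, the image of $R[I]$ in $R/I_{\pfd}$ is isomorphic to $R[I]$ itself, so it is not negligible.
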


In particular, the above includes the fact that $R[I]$ is a perfectoid ideal, from which we establish other properties in \Cref{bounded_d_infinity_torsion}.

We note that as a consequence of \Cref{thm_torsion_intro}, we revisit \cite[Proposition 4.7]{Dine_2024} in \Cref{tilt_untilt_domain}. Namely, this shows that a perfectoid ring is a domain if and only if its tilt is. Also, in \Cref{exist_pfd_sec}, we apply the above results to the existence problem for discrete perfectoidizations. We show in \Cref{I_tors and I_tors_free existence} that this question can be studied by excision along an arbitrary ideal \(I\subset R\).

\subsubsection*{Structure of this paper}
In \Cref{arc_recollection}, we recall how the arc topology interacts with perfectoid rings.



In \Cref{semipfd_arcproof}, we prove \Cref{semipfdproot} -- in this section, we also take the opportunity to prove some structural results on semiperfectoid rings.

In \Cref{pfdideals_section}, we will study ideals of perfectoid rings whose quotients become again perfectoid rings. This will collect and improve previous results on such ideals.


In \Cref{torsion_section}, we will prove our main results and their consequences stated above.

\subsubsection*{Notations and setup} Throughout, $p\in \N$ is a fixed prime number. 
If $R$ is a ring, and $I\subset R$ is an ideal and $M$ an $R$-module, we define submodules
\begin{align*}
    M[I] & \defeq \set{m \in M}{\text{\(xm = 0\) for all \(x \in I\)}} \quad \text{and} \\
    M[I^{\infty}] & \defeq \set{m \in M}{\text{There exists \(N > 0\) such that \(xm = 0\) for any \(x \in I^N\)}}.
\end{align*}
If \(R\) is a perfectoid ring, we can define the perfectoidization \(I_{\pfd}\) of ideal \(I\) as the kernel of the surjective map \(R \to (R/I)_{\pfd}\) (\Cref{def_pfdization_ideal}).
We say that $M$ is \emph{$I_{\pfd}$-almost zero} if \(M\) is equal to \(M[I_{\pfd}]\) (similarly as \cite[Definition 10.1]{bhatt2022Prismsa}).

In \Cref{tech_pcom_subsec}, we use the following conventions. If $R$ is a derived $p$-complete ring then we write $\widehat{\Ds}(R)$ for the \emph{stable infinity category of derived $p$-complete $R$-modules.} If $R\to R'$ is a ring morphism between derived $p$-complete rings, we write by
$R'\widehat{\otimes}^L_R(-)$ for the derived $p$-complete base change which is a colimit preserving functor $\widehat{\Ds}(R)\to \widehat{\Ds}(R')$.


\subsection*{Acknowledgments}
This work was started in the problem session of the conference `\(p\)-adic and Characteristic \(p\) Methods in Algebraic Geometry' at EPFL. We are very grateful for these opportunities and their hospitality. The second-named author thanks Kęstutis Česnavičius for interesting discussions at this same conference. The authors thank Marta Benozzo for intersecting discussions at this conference on the subject of the paper. The second-named author thanks Lucas Gerth, Alapan Mukhopadhyay and Zsolt Patakfalvi for helpful and interesting discussions on the subject of the article.
The first-named author was supported by JSPS KAKENHI Grant number 24KJ1085. The second-named author was supported by the FNS grant, \#200021-
231484.


\section{Recollection on arc covers and perfectoidization}\label{arc_recollection}

We recall how a permissive notion of covers can be used to understand perfectoidization. 

\begin{rem}\label{arc_top_def} We consider the \emph{$p$-complete arc topology} on commutative rings, see \cite[Section 2.1.1]{cesnavicius2023purityflatcohomology}. See in particular the second paragraph there which implies that in definition the of $p$-complete arc-covers, we may test that a map is an arc cover only with perfectoid valuation rings of dimension $\leq 1$. Note also that the $p$-complete arc site of a ring $R$ is the same as the $p$-complete arc site of the classical $p$-completion $R^{\wedge p}$ of $R$.
    
\end{rem}

\begin{rem}\label{pfdization_def} If $R$ is any ring, we define the \emph{perfectoidization} of $R$ as 
\[R_{\pfd}=R\Gamma_{\arc}(\Spf(R^{\wedge p}),\Os)\]
which is in general a coconnective $E_{\infty}$-ring. We will be interested in this article in cases where it is discrete, see \cite[Corollary 7.3 and Theorem 10.11]{bhatt2022Prismsa}. If $R_{\pfd}$ is discrete, and $R^{\wedge p}$ admits a map from a perfectoid ring, then the map $R\to R_{\pfd}$ is initial with respect to maps $R\to S$ where $S$ is a perfectoid ring, see \cite[Corollary 8.14]{bhatt2022Prismsa}. Note also that we have that by \cite[Corollary 8.11]{bhatt2022Prismsa}
\[R_{\pfd}=\varprojlim_{\substack{R\to R' \\ R' \pfd}}R'\]
in the $\infty$-category of $E_{\infty}$-rings. So if this limit happens to be a discrete perfectoid ring, it follows that $R\to R_{\pfd}$ is initial with respect to maps to perfectoid rings. Reciprocally, if $R$ is a ring such that there exist an inital perfectoid ring $R\to S$ over $R$, then the limit expression forces $R_{\pfd}=S$.
\end{rem}

The following lemma is useful to realize perfectoidization as subrings of big perfectoid rings.

\begin{lemma}\label{archull} Let $R$ be any $p$-complete ring that admits a map $R\to R_{\pfd}$, where $R_{\pfd}$ is a discrete perfectoid ring, this map being initial with respect to maps to perfectoid rings. Say $R\to S$ is a $p$-complete arc cover where $S$ is a perfectoid ring. Then the natural map $R_{\pfd}\to S$ is injective.

\end{lemma}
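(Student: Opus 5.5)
The plan is to use $p$-complete arc descent for perfectoidization. By \Cref{pfdization_def}, $R_{\pfd}=R\Gamma_{\arc}(\Spf(R),\Os)$, and since $R\to S$ is a $p$-complete arc cover, the sheaf property gives a Čech description
\[R_{\pfd}\simeq \lim\Bigl( S_{\pfd} \rightrightarrows (S\widehat{\otimes}^L_R S)_{\pfd} \;\cdots\Bigr),\]
where the limit is a totalization in $E_\infty$-rings. Because $S$ is perfectoid, $S_{\pfd}=S$. Everything in the Čech nerve is coconnective, and $R_{\pfd}$ is discrete by hypothesis, so taking $H^0$ of the totalization shows that $R_{\pfd}=H^0(\mathrm{Tot})$ is the equalizer of the two maps $S\rightrightarrows H^0((S\widehat{\otimes}^L_R S)_{\pfd})$. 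In particular $R_{\pfd}\to S$ is injective, being the inclusion of an equalizer.

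The steps, in order, are as follows.

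First, recall from \cite[Section 8.2]{bhatt2022Prismsa} that $R\mapsto R_{\pfd}$ is a $p$-complete arc sheaf, with values in the derived category or in $E_\infty$-rings. This gives the identification of $R_{\pfd}$ with the totalization of the Čech nerve of $R\to S$, after perfectoidization termwise.

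Second, check that the map $R_{\pfd}\to S$ produced by the universal property agrees with the augmentation map of the totalization. Both are maps of $R$-algebras into the perfectoid ring $S$. Since $R_{\pfd}$ is initial among perfectoid $R$-algebras, such a map is unique, so the two coincide.

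Third, use coconnectivity of all the terms (each is an arc-cohomology object of $\Os$ in nonnegative cohomological degrees). Then $H^0$ of the totalization is $\ker(d^0-d^1)$ on $H^0$ of the first two terms. This identifies $R_{\pfd}=H^0(R_{\pfd})$ with a submodule of $S$.

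The main obstacle is the first step: one must make sure that the sheaf property applies to an arbitrary $p$-complete arc cover $R\to S$, with $R$ not necessarily perfectoid, and that perfectoidization commutes with this Čech description. I would justify this directly from the definition in \Cref{pfdization_def} as arc cohomology of $\Os$ on $\Spf(R)$. Indeed, $R\Gamma_{\arc}$ of a sheaf always satisfies Čech descent along covers, and the value of $R\Gamma_{\arc}(-,\Os)$ on a perfectoid ring is that ring itself (\cite[Section 8]{bhatt2022Prismsa}). This reduces the lemma to the formal equalizer argument above.
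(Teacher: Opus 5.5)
Your proof is correct, but it takes a different route from the paper's.

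\textbf{The paper's route.} It first uses the universal property of $R\to R_{\pfd}$ to show that $R_{\pfd}\to S$ is itself a $p$-complete arc cover. Given a perfectoid valuation ring $V$ under $R_{\pfd}$, a lifting square for $R\to V$ through $S$ is automatically a lifting square for $R_{\pfd}$, because two $R$-algebra maps $R_{\pfd}\to V'$ must agree. The paper then applies descent for perfectoid rings \cite[Proposition 8.10]{bhatt2022Prismsa} to the cover $R_{\pfd}\to S$. Its \v{C}ech nerve consists of discrete perfectoid rings, so injectivity is immediate.

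\textbf{Your route.} You apply \v{C}ech descent for $R\Gamma_{\arc}(-,\Os)$ directly along $R\to S$. The higher terms need not be discrete. (In the arc site of discrete rings they are perfectoidizations of classical completed tensor products, but only their coconnectivity matters.) You compensate by using coconnectivity, which makes $H^0$ of the totalization the equalizer of the first two terms.

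\textbf{What your route buys.} You never have to check that $R_{\pfd}\to S$ is a cover. Your argument in fact shows that $H^0\bigl(R\Gamma_{\arc}(\Spf R,\Os)\bigr)\to S$ is injective for every arc cover $R\to S$ by a perfectoid ring. The discreteness and initiality hypotheses are used only to identify the source with the initial perfectoid ring, and the map with the universal one (your second step).

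\textbf{What the paper's route buys.} It needs only the descent statement for discrete perfectoid rings, not the general $\infty$-sheaf and totalization formalism. It also yields, as a byproduct, that $R_{\pfd}\to S$ is an arc cover. This universal-property argument is the one the paper reuses in \Cref{arc_remarks}.
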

\begin{proof}
We essentially only need to explain why the natural map $R_{\pfd}\to S$ is also a $p$-complete arc-cover. Let $R_{\pfd}\to V$ where $V$ is a perfectoid valuation ring. Then by hypothesis, considering the precomposition $R\to V$, there is an extension of perfectoid valuation rings $V\to V'$ and a commutative square
\[\begin{tikzcd}
R \arrow[d] \arrow[r] & S \arrow[d] \\
V \arrow[r]           & V'         
\end{tikzcd}\]
Because every ring except $R$ is perfectoid in this diagram, a similar diagram with $R_{\pfd}$ replaced by $R$ and the induced maps also exists using our hypothesis that $R\to R_{\pfd}$ is initial with respect to maps to perfectoid rings, which shows the claim.
    Then $R_{\pfd}\to S$ is injective a consequence of \cite[ Proposition 8.10]{bhatt2022Prismsa}.
\end{proof}

\begin{rem}\label{arc_remarks}We point out interesting types of $p$-complete arc covers of any ring $R$.
\begin{enumerate}
    \item The first one is from the proof \cite[Lemma 8.8]{bhatt2022Prismsa}. Namely, it is rather easy to take a $p$-completely faithfully flat cover by a semiperfectoid ring $R\to R'$ -- namely take the $p$-completion of a faithfully flat cover $R\to R'$ where $R'/pR'$ is semiperfect and $p$ admits a $p$-th root in $R'$. The argument in \Cref{archull} shows that $R'\to R'_{\pfd}$ is an arc cover. Therefore $R\to R'_{\pfd}$ is an arc-cover. 
    \item Following, \cite[Remark 8.9]{bhatt2022Prismsa} we can provide a canonical (up to choice of algebraic closures) $p$-complete arc cover of any ring. Write $\Spa(R,R)_{\leq 1}$ for the set of continuous valuations of rank $\leq 1$ on $R$ equipped with the $p$-adic topology. This set is identified with the subset of $\Spec(R)$ which consists of primes that are supports of (rank 1) perfectoid valuations. For $x\in \Spa(R,R)_{\leq 1}$ write $V_{x}$ for the $p$-completion of an absolute integral closure of the corresponding valuation ring to $x$ in $k(\supp(x))$. Then $R\to \prod_{x\in \Spa(R,R)_{\leq 1}}V_x$ is a $p$-complete arc cover to a perfectoid ring. 
\end{enumerate}

Using the above, we can revisit  \cite[Lemma A.2]{ma2022Analogue} -- their proof does not use the arc topology as a core engine, where ours does. Note that technical argument in both proofs is similar.

\begin{cor}\label{pfd_is_p_torsionfree} Suppose that $R$ is a $p$-torsion-free ring which admits a discrete perfectoidization. In the construction of \Cref{arc_remarks}, (2), on one can take only those $V_x$'s that are $p$-torsion-free in the construction exposed in (2) above, and the map stays injective. Namely we have an injective map
\[R_{\pfd}\to \prod_{\substack{x\in \Spa(R,R)_{\leq 1} \\ p\not \in \supp(x) }}V_x.\]
In particular $R_{\pfd}$ is $p$-torsion-free.
\end{cor}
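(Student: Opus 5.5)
Since $p$ is a nonzerodivisor in $R$, the localization $R\to R[1/p]$ is injective. The plan is to show that $R\to R_{\pfd}$ factors through a ring $R'$ obtained from $R[1/p]$, and that the resulting map to the product of the $p$-torsion-free $V_x$ remains a $p$-complete arc cover. Precisely, write $T=\{x\in \Spa(R,R)_{\leq 1} : p\notin\supp(x)\}$ and $S'=\prod_{x\in T}V_x$. This is a product of $p$-torsion-free perfectoid valuation rings, hence a $p$-torsion-free perfectoid ring. Since $R$ has a discrete perfectoidization and $S'$ is perfectoid, we get a map $R_{\pfd}\to S'$. By \Cref{archull} it suffices to show that $R\to S'$ is a $p$-complete arc cover; injectivity of $R_{\pfd}\to S'$ then follows, and $p$-torsion-freeness of $R_{\pfd}$ is inherited from $S'$.

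To check the arc cover property, \Cref{arc_top_def} lets us test only against maps $R\to V$ with $V$ a perfectoid valuation ring of rank $\leq 1$. If $p\neq 0$ in $V$, then $V$ is $p$-torsion-free, so the associated valuation has support not containing $p$ and lies in $T$. The standard argument of \Cref{arc_remarks}(2) (extend $V_x$ compatibly to an algebraically closed extension containing $V$) then gives the required lift through the factor $V_x$ of $S'$.

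The main obstacle is the case $pV=0$, i.e.\ $V$ is a perfect valuation ring of characteristic $p$, where $\ker(R\to V)$ contains $p$. Here I would use $p$-torsion-freeness of $R$ to specialize a characteristic-zero point onto this one. Let $\mathfrak q=\ker(R\to V)$, which is a prime containing $p$. Since $p$ is a nonzerodivisor, $p$ is not contained in any minimal prime below $\mathfrak q$. More precisely, I would pick a minimal prime $\mathfrak q_0\subset\mathfrak q$, which does not contain $p$ because minimal primes consist of zero divisors. The domain $R/\mathfrak q_0$ then has characteristic $0$ and specializes to $\mathfrak q$. The valuation on $R/\mathfrak q$ determined by $V$ can be composed with a valuation ring of $\mathrm{Frac}(R/\mathfrak q_0)$ dominating the local ring at $\mathfrak q$ to produce a valuation ring $W$ of mixed characteristic whose residue data dominate $V$. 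After absolute integral closure and $p$-completion, and after passing to the rank $\leq1$ quotient by the generization trick in \cite[Section 2.1.1]{cesnavicius2023purityflatcohomology}, this yields a point $y\in T$ with $V_y$ $p$-torsion-free. The map $R\to V$ then factors up to extension through $V_y\to V_y/\sqrt{pV_y}$-type quotients, so $R\to V_y\to V'$ with $V\to V'$.

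This is precisely the ``technical argument similar to \cite[Lemma A.2]{ma2022Analogue}'', and the care needed is in producing the rank one $p$-torsion-free valuation specializing to the given one. Once that lift exists, the arc cover property holds and the corollary follows from \Cref{archull}.
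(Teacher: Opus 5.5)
Your reduction to \Cref{archull} and your treatment of test maps $R\to V$ with $pV\neq 0$ are fine. The characteristic-$p$ case, however, fails, and no lift through a single factor of $\prod_{x\in T}V_x$ can repair it. For $y\in T$, the ring $V_y$ has rank one and $p$ is a nonzero element of its maximal ideal, so $V_y/\sqrt{pV_y}$ is the residue field of $V_y$. Any map $V_y\to V'$ to a valuation ring with $pV'=0$ kills $\sqrt{pV_y}$ and therefore factors through this field. Hence the composite $R\to V_y\to V'$ sends every element of $R$ to $0$ or to a unit. Now suppose $V\to V'$ is an extension (injective and local) and $R\to V$ induces a nontrivial valuation on $R/\mathfrak{q}$. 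For example, take $R=\Z_p[p^{1/p^{\infty}},T^{1/p^{\infty}}]^{p\wedge}$ with a rank-one valuation on $R/\sqrt{pR}$ such that $0<|T|<1$. Then some element of $R$ goes to a nonzero nonunit of $V'$. So the factorization $R\to V_y\to V'$ that you assert does not exist: generizing your composite $W$ to rank $\leq 1$ throws away exactly the specialization towards $V$. To show that the restricted product is an arc cover you would need maps out of the infinite product that are not projections (ultraproducts), and your argument does not supply them.

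The paper never claims that $R\to\prod_{x\in T}V_x$ is an arc cover. It starts from the injectivity of $R_{\pfd}\to\prod_{x\in\Spa(R,R)_{\leq 1}}V_x$ given by \Cref{archull} and \Cref{arc_remarks}(2). It then removes every $y$ with $p\in\supp(y)$, using, as you do, a minimal prime avoiding $p$ and a dominating valuation ring to produce a $p$-torsion-free rank-one point $z$ with $\supp(z)\subset\supp(y)$. What justifies the removal is a comparison of kernels, and your composite $W$ does this job if you keep its full rank:
\begin{enumerate}
\item $\widehat{W^+}$ is a $p$-torsion-free perfectoid valuation ring.
\item $V_y$ embeds, compatibly with the maps from $R$, as an extension of valuation rings into $\widehat{W^+}/P$. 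Here $P\ni p$ is the prime over the maximal ideal of your dominating valuation ring, so the residue valuation restricts to $y$.
\item $\widehat{W^+}$, being a domain, injects into its localization at $\sqrt{p\widehat{W^+}}$. This is a rank-one valuation ring that injects into its $p$-completion, which contains $V_z$ for the induced point $z\in T$.
\end{enumerate}
Maps from $R_{\pfd}$ to perfectoid rings are determined by their restriction to $R$. Hence $\ker(R_{\pfd}\to V_z)=\ker(R_{\pfd}\to \widehat{W^+})\subseteq\ker(R_{\pfd}\to V_y)$, so $y$ can be dropped without losing injectivity, and $p$-torsion-freeness of $R_{\pfd}$ follows.
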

\begin{proof}
    Take a valuation $y\in \Spa(R,R)_{\leq 1}$ with $p\in \supp(y)$. Because $R$ is $p$-torsion-free, $p$ is not contained in any minimal prime of $R$. So there is a prime $\qk\subset \supp(y)$ with $p\not \in \qk$. Take a valuation ring $V$ with support $\qk$ dominating the prime $\supp(y)$. Take a valuation ring extension of $V$ of rank 1, that we denote by $W$. This valuation ring $W$ is this is still $p$-torsion-free -- indeed it is a faithfully flat extension of a $p$-torsion-free domain. Then take the $p$-adic completion $\widehat{W}$ which is again still a $p$-torsion-free domain because the $p$-completion of any $p$-torsion-free module is $p$-torsion-free. Now $\widehat{W}$ defines a rank 1 valuation $z$ on $R$ with $p\not \in \supp(z)\subset \supp(y)$. Therefore we can harmlessly remove $y$ from the set where we take the product and the map $R_{\pfd}\to \prod V_x$ stays injective. 
\end{proof}
    
\end{rem}
\begin{rem}
     If $R$ is a a perfectoid ring and $I$ an ideal of $R$ then \Cref{archull} shows that (see \Cref{def_pfdization_ideal})
    \[I_{\pfd}=\bigcap_{x\in \Spa(R/I,R/I)_{\leq 1}}\supp(x)\]
    which is to compare to \cite[Proposition 5.6]{Dine_2024} which states an analogous result for $p$-torsion-free perfectoid rings.
\end{rem}

The following lemma explains why we can use excision in the arc topology \cite[Corollary 8.12]{bhatt2022Prismsa}, in the case of integral maps, which can be useful to compute perfectoidizations. We thank Kęstutis Česnavičius for highlighting the usefulness of this lemma to compute the perfectoidization of $\mu_p$. We will use this result in our forthcoming work \cite{pfd_fin_alg} too.


\begin{proposition}[{\cite[Corollary 8.12]{bhatt2022Prismsa}}]{\label{812prisms}}Let $R$ be a perfectoid ring. Suppose that $R\to S$ is the $p$-completion of an integral ring morphism.\footnote{This assumption ensures that all perfectoidization appearing in the pullback square of Lemma \ref{812prisms} are discrete.} Let $S\to S'$ be also the $p$-completion of an integral map $S_0\to S_0'$. Let $I\subset S_0$ an ideal 
such that $S_0\to S_0'$
is an isomorphism away from $I$. Then we have (derived) pullback square of rings
\[\begin{tikzcd}
S_{\pfd} \arrow[d] \arrow[r] & S'_{\pfd} \arrow[d] \\
(S/I)_{\pfd} \arrow[r]       & (S'/I)_{\pfd}      
\end{tikzcd}\]
    
\end{proposition}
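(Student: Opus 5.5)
The plan is to deduce the square from the excision property of perfectoidization in the $p$-complete arc topology, \cite[Corollary 8.12]{bhatt2022Prismsa}. After that, the only remaining work is to check that every term in the square is discrete. First, recall the mechanism behind excision. Perfectoidization $A\mapsto A_{\pfd}=R\Gamma_{\arc}(\Spf(A^{\wedge p}),\Os)$ is a $p$-complete arc sheaf in $A$. Moreover, for any ring map $A\to A'$ and ideal $J\subset A$ with $A\to A'$ an isomorphism away from $J$, the square
\[\begin{tikzcd}
A \arrow[d] \arrow[r] & A' \arrow[d] \\
A/J \arrow[r]       & A'/JA'
\end{tikzcd}\]
becomes a pullback after applying $(-)_{\pfd}$. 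To see this, test against perfectoid valuation rings $V$ of rank $\le 1$, which suffices by \Cref{arc_top_def}. A map $A\to V$ either kills $J$, in which case it factors through $A/J$, or it does not. In the second case the image of $J$ generates an ideal of $V$ whose radical is the maximal ideal, and since $A\to A'$ is an isomorphism away from $J$, the map extends uniquely along $A\to A'$ after a valuation-ring argument. This is exactly the input used in \cite[Corollary 8.12]{bhatt2022Prismsa}, giving a derived pullback of coconnective $E_\infty$-rings
\[S_{0,\pfd}\simeq S'_{0,\pfd}\times_{(S'_0/IS'_0)_{\pfd}}(S_0/I)_{\pfd}.\]

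Second, I pass from $S_0$ to its completion. By \Cref{arc_top_def}, the arc site of a ring agrees with that of its $p$-completion, so $S_{0,\pfd}=S_{\pfd}$, and likewise $S'_{0,\pfd}=S'_{\pfd}$. For the quotients, $(S_0/I)^{\wedge p}$ agrees with $(S/IS)^{\wedge p}$: both are the $p$-completion of $S_0/I$, since $S_0/I\to S/IS$ becomes an isomorphism modulo $p^n$ for every $n$. The same holds for $S'_0/IS'_0$ versus $S'/IS'$. Hence the bottom row may be written as $(S/I)_{\pfd}\to(S'/I)_{\pfd}$, as in the statement.

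Third, and this is the main point, I verify discreteness. Each of $S$, $S'$, $S/I$, $S'/I$ is the $p$-completion of an integral algebra over the perfectoid ring $R$. For $S/I$ this is because a quotient of an integral algebra is integral, and for $S'$ because a composite of integral maps is integral. For such algebras, \cite[Theorem 10.11]{bhatt2022Prismsa} (together with \cite[Corollary 7.3]{bhatt2022Prismsa}, writing an integral algebra as a filtered colimit of finite ones and using that perfectoidization commutes with filtered colimits after completion) gives that the perfectoidization is a discrete perfectoid ring. Therefore all four corners are discrete, and the derived pullback is a pullback square of rings. Concretely, the $\pi_{-1}$ obstruction vanishes: it is the cokernel of $S'_{\pfd}\oplus(S/I)_{\pfd}\to(S'/I)_{\pfd}$, and this cokernel is $\pi_{-1}$ of the derived pullback $S_{\pfd}$, which is discrete.

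The step I expect to be the most delicate is the reduction in the third step. One must make sure the colimit and completion commute with perfectoidization in a way compatible with discreteness, i.e. that \cite[Theorem 10.11]{bhatt2022Prismsa} genuinely applies to $p$-completions of arbitrary integral (not only finite) $R$-algebras. I would handle this by noting that $A\mapsto A_{\pfd}$ commutes with filtered colimits up to $p$-completion, and that a derived $p$-completion of a filtered colimit of discrete perfectoid rings is again a discrete perfectoid ring, because perfectoid rings have bounded $p^\infty$-torsion.
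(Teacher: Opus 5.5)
\textbf{There is a genuine gap, in your first step.} You claim that for \emph{any} ring map $A\to A'$ that is an isomorphism away from $J$ (that is, $A[1/j]\cong A'[1/j]$ for $j\in J$), applying $(-)_{\pfd}$ turns the excision square into a pullback. You justify the unique extension of $A\to V$ to $A'$ only by ``a valuation-ring argument''. As stated, this claim is false.

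Take $A_0=R[x^{1/p^\infty}]\to A_0'=R[x^{\pm 1/p^\infty}]$ with $J=(x)$. This map is an isomorphism away from $J$. The perfectoidizations are $A_{\pfd}=R\langle x^{1/p^\infty}\rangle$, $A'_{\pfd}=R\langle x^{\pm1/p^\infty}\rangle$, $(A/J)_{\pfd}=R$ and $(A'/JA')_{\pfd}=0$. The would-be pullback is therefore $R\langle x^{\pm1/p^\infty}\rangle\times R$, which is not $R\langle x^{1/p^\infty}\rangle$. All terms here are discrete, so the failure has nothing to do with discreteness. Concretely, take $R=\mathcal{O}_C$ and the map $x^{1/p^n}\mapsto p^{1/p^n}$ into $\mathcal{O}_C$. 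It does not kill $J$, yet it does not extend to $A_0'$, so $A_0\to A_0'\times A_0/J$ is not even a $p$-complete arc cover.

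\textbf{What is missing is exactly where integrality of $S_0\to S_0'$ enters.} This is the actual content of the paper's proof. Given $S_0\to V$ with $V$ of rank $\le 1$ and $i\in I$ with nonzero image, you get a map $S_0'\to S_0'[1/i]\cong S_0[1/i]\to V[1/i]\subset \Frac(V)$. The whole point is to show it lands in $V$. This holds because each $s\in S_0'$ satisfies a monic equation over $S_0$, so its image in $\Frac(V)$ is integral over $V$, hence lies in $V$ since $V$ is integrally closed. Uniqueness of the extension follows because $V\to V[1/i]$ is injective. In your write-up integrality is used only for discreteness, so the excision input is left unjustified.

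Once you insert this argument, $S_0\to S_0'\times S_0/I$ is an arc cover with unique lifts away from $I$. Then $p$-complete to obtain the cover $S\to S'\times S/\overline{IS}$, whose ideal is derived $p$-complete as required in \cite[Corollary 8.12]{bhatt2022Prismsa}. With that in place, your remaining steps match the paper's outline: identifying the corners via $p$-completion, and getting discreteness from the integrality hypothesis.
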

\begin{proof}
First, we note that in \cite[Corollary 8.12]{bhatt2022Prismsa} an assumption about derived $p$-completeness of $I$ is made. This is done there because the authors only consider the $p$-complete arc site on derived $p$-complete rings. Note that our definition of perfectoidization of a ring $R$ is anyway the perfectoidization of the classical completion $R^{\wedge p}$, see \Cref{pfdization_def} so it aligns with their setup. This is why we can disregard the hypothesis on $I$. Indeed, anyway, as explained below, using our hypothesis
\[S_0\to S_0'\times S_0/I\]
is an arc-cover. Therefore, $p$-completing the above, we get that
\[S\to S'\times S/\overline{IS}\]
is a $p$-complete arc-cover, where $\overline{IS}$ denotes the $p$-adic closure of $IS$ in $S$ -- this ideal is derived $p$-complete. This last setup is the only thing which is really needed for the proof of \cite[Corollary 8.12]{bhatt2022Prismsa}. Note also that using our definition 
\begin{equation*}
    (S_0)_{\pfd}=(S)_{\pfd} \quad  (S_0')_{\pfd}=(S')_{\pfd} \quad (S_0/I)_{\pfd}=(S/\overline{IS})_{\pfd} \quad (S_0'/IS_0')_{\pfd}=(S'/\overline{IS'})_{\pfd}.
\end{equation*}

We therefore only need to recall why using our hypothesis, $S_0\to S_0'$ is an isomorphism away from $I$ in the arc topology. So let $V$ be a valuation ring of rank 1 and $S_0\to V$ a map -- we have to show that if $I$ is not sent to zero in $V$, then there is a unique extension
\[\begin{tikzcd}
S_0 \arrow[d] \arrow[r] & S_0' \arrow[ld, dashed] \\
V                     &                      
\end{tikzcd}\]
Let $i\in I$ such that the image of $i$ is not zero in $V$. By assumption 
\[S_0\left[i^{-1}\right]\to S_0'\left[i^{-1}\right]\]
is an isomorphism. Now, it suffices to show that the map
\[S_0'\to S_0'\left[i^{-1}\right]\xleftarrow{\sim}S_0\left[i^{-1}\right]\to V\left[i^{-1}\right]\subset \Frac(V),\]
factors through $V$. 
Take $s\in S_0'$. Then $s$ is integral over $S_0$. Therefore it follows that the image of $s\in \Frac(V)$ is integral over $V$. But as $V$ is integrally closed in $\Frac(V)$, we get the result.


    
\end{proof}

\begin{rem}To apply \Cref{812prisms}, one can sometimes take $S_0=S\to S_0'=S'$, so the case where we have an integral map between already $p$-complete rings. Both setups can be useful.
\end{rem}

\section{Semiperfectoid rings}\label{semipfd_arcproof}
The goal of this section is to prove \Cref{semipfdproot}. We take the opportunity to make some remarks about semiperfectoid rings beforehand.

\subsection{Intrinsic definition of $p$-complete semiperfectoid rings}

Recall the following from \cite[Section 7]{bhatt2022Prismsa}.

\begin{definition}\label{def_semipfd} A ring $S$ is called \emph{semiperfectoid} if it is derived $p$-complete and that there exists a surjection $R\to S$ from a perfectoid ring.
\end{definition}

In \cite[Definition 3.5.7 and Lemma 3.5.8]{bhatt2025Aspects}, Bhatt gives another definition of semiperfectoid rings and proves that it can be written as a quotient of perfectoid rings.
Below (\Cref{intrisic_semipfd}), we show that there is an intrinsic definition of $p$-complete semiperfectoid rings that does not refer to a (surjective) map from a perfectoid ring, and that the definition \cite[Lemma 3.5.8]{bhatt2025Aspects} is equivalent to \Cref{def_semipfd} in the classical $p$-complete case.

Recall that for any $p$-complete ring $R$ we have a canonical map
\[\theta_R=\theta\colon A_{\inf}(R)\to R\]
which is the counit of the adjunction $W\dashv (-)^{\flat}$ between perfect algebras and $p$-complete rings.

We first recall several lemmas about $A_{\inf}$.

\begin{lemma}\label{inv_Ainf}
    Let $R$ be $p$-adically complete. An element $v\in A_{\inf}(R)$ is invertible if and only if $\theta(v)\in R$ is invertible.
    \begin{proof}
        We show the non-trivial direction. Write $v=\sum_{i}[v_i]p^i$ where $v_i\in R^{\flat}$. Suppose that the image $\theta(v)=\sum_i v_i^{\sharp}p^i$ is invertible, implying that $v_0^{\sharp}$ is invertible by $p$-completeness. Note that $v_0$ seen in $\varprojlim_{r\mapsto r^p} R$ has as first component $v_0^{\sharp}$. In this point of view, $v_0$ is a compatible system of $p$-th power roots of $v_0^{\sharp}$ this implies that in this perspective each component of $v_0$ is invertible and therefore that $v_0$ is invertible in $R^{\flat}$. By $p$-completeness of $A_{\inf}$, we conclude that $v$ is invertible.
    \end{proof}
\end{lemma}


\begin{lemma}\label{automatic_completensess}
Let $R$ be a ring with $pR=0$. Then if $x\in \ker(R^{\flat}\to R)$, $R^{\flat}$ is $x$-complete.
\begin{proof}Such an element $x$ can be written as $x=(x_i)_{i\geq 1}\in R^{\flat}$ such that $x_1=0$ and $x_i^p=x_{i-1}$ for $i\geq 2$. But then powers of $x$ are of the form 
\[(0,0,0,0,\dots,0,x_2,\dots)\]
implying $x$-completeness. Indeed, a Cauchy sequence will define by induction a unique element in $R^{\flat}$.
\end{proof}
    
\end{lemma}

We note the following automatic completeness property of $A_{\inf}$.

\begin{cor}\label{completeness_Ainf}
Let $R$ be $p$-complete. Then $A_{\inf}(R)$ is $(p,\xi)$-complete for any element $\xi\in \ker(\theta)$, where $\theta\colon A_{\inf}(R)\to R$ is the natural map.
\end{cor}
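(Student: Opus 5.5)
The plan is to reduce $(p,\xi)$-completeness to completeness with respect to each element separately, and to handle the two elements by two different mechanisms. Recall that a ring (or module) is derived complete with respect to a finitely generated ideal $(p,\xi)$ if and only if it is derived $p$-complete and derived $\xi$-complete. Since $A_{\inf}(R)=W(R^{\flat})$ is classically $p$-adically complete and $p$-torsion-free, it is in particular derived $p$-complete. It therefore remains to prove derived $\xi$-completeness. Classical completeness will then follow, because $A_{\inf}(R)$ is $p$-torsion-free and $p$-complete, and $\xi$ has bounded torsion modulo $p$ as shown below. I will aim for the derived statement, which is what is used in practice.

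For derived $\xi$-completeness of a derived $p$-complete module $M$, it suffices to show that $M/pM$ is derived $\xi$-complete. Indeed, derived $\xi$-completeness can be tested via the vanishing of $\varprojlim(\cdots\xrightarrow{\xi}M\xrightarrow{\xi}M)$ and its $\varprojlim^1$, that is, of $R\mathrm{Hom}(\mathbb{Z}[\xi^{\pm1}],M)$. Since $M$ is derived $p$-complete, this object is the derived limit of its reductions mod $p^n$. Dévissage along $0\to M/p\to M/p^{n}\to M/p^{n-1}\to 0$ (using $p$-torsion-freeness) reduces the question to $M/p=R^{\flat}$.

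Now $A_{\inf}(R)/p=R^{\flat}$, and the image $\bar\xi$ of $\xi$ in $R^{\flat}$ lies in the kernel of $R^{\flat}\to R/p$. This holds because $\theta$ reduced mod $p$ is the map $R^{\flat}\to R/p$, $(x_i)\mapsto x_1$, viewing $R^{\flat}=\varprojlim_{\Frob} R/p$. Applying \Cref{automatic_completensess} to the $\mathbb{F}_p$-algebra $R/p$ (noting that $(R/p)^{\flat}=R^{\flat}$) shows that $R^{\flat}$ is $\bar\xi$-adically complete. I must also check that it is $\bar\xi$-adically separated and that the $\bar\xi$-power torsion is bounded, so that classical completeness implies derived completeness. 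From the explicit description, $\bar\xi^{p^{n}}$ has its first $n+1$ components zero, so $\bigcap_n \bar\xi^{p^n}R^{\flat}$ is contained in the set of elements with all components zero, which is $0$. For the torsion step, I would bypass the boundedness argument by directly computing $\varprojlim$ and $\varprojlim^1$ along multiplication by $\bar\xi$. Writing $K_n=\ker(R^{\flat}\to R/p)$ in the $n$-th component, we have $\bar\xi^{p^n}R^{\flat}\subset K_n$, and $R^{\flat}=\varprojlim R^{\flat}/K_n$ with surjective transitions. This exhibits $R^{\flat}$ as complete for a topology refining the $\bar\xi$-adic one, which should suffice to kill both $\varprojlim$ and $\varprojlim^1$.

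The main obstacle is precisely this last step: passing from the naive statement of \Cref{automatic_completensess} (Cauchy sequences converge) to derived $\bar\xi$-completeness of $R^{\flat}$ when $\bar\xi$ may be a zero divisor. My first attempt will be to show that $R^{\flat}\cong\varprojlim_n R^{\flat}/\bar\xi^{p^n}$ holds as a derived limit, which is formally enough. If that fails, I will instead use the fact that a module which is classically $\bar\xi$-complete and separated and equals its derived completion is derived complete, checking this via the Mittag-Leffler property of the system $R^{\flat}/\bar\xi^{n}$.
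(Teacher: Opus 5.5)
Your proposal is correct and follows the paper's proof. The paper also uses that $A_{\inf}(R)$ is $p$-complete to reduce to $\xi$-completeness of $A_{\inf}(R)/p=R^{\flat}$. That is exactly \Cref{automatic_completensess} applied to $R/pR$, since $\theta$ modulo $p$ is the projection $R^{\flat}\to R/pR$.

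The step you flag as the main obstacle is not actually one. A module that is $x$-adically complete and separated is always derived $x$-complete, with no bound on the $x$-torsion needed. Your own bypass is the standard proof of this: solve $m_i-\bar\xi m_{i+1}=a_i$ by $m_i=\sum_k\bar\xi^k a_{i+k}$, which converges in the inverse-limit topology given by the $K_n$. One small correction: that topology is coarser than the $\bar\xi$-adic one, not a refinement of it, but coarser is all the argument needs.
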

\begin{proof}To show this, as $A_{\inf}(R)$ is $p$-complete, this enough to check that $A_{\inf}(R)/pA_{\inf}(R)=R^{\flat}$ is $\xi$-complete, where the last lemma concludes.
\end{proof}

Now we can find a intrinsic definition of $p$-complete semiperfectoid rings, see (2) in \Cref{intrisic_semipfd} below. This is related to the equivalent definitions of quasiregular semiperfectoid rings in \cite[Definition 4.20 and Lemma 4.25]{bhatt2019Topologicala}.

\begin{proposition}[{cf.\ \cite[Lemma 3.5.8]{bhatt2025Aspects}}]\label{intrisic_semipfd}
    
Let $R$ be a $p$-adically complete ring. Then the following are equivalent.
\begin{enumerate}

    \item There is a perfectoid ring $S$ and a surjection $S\to R$.
    \item $R/pR$ is semiperfect and there is some $\varpi\in R$ such that $\varpi^p=pu$ for some $u\in R^{\times}$.
    \item $R/pR$ is semiperfect, there is a perfectoid algebra $S$ and a map $S\to R$.
    \item The canonical map 
    \[\theta\colon A_{\inf}(R)\to R\]
    is surjective and there is some distinguished element $\xi$ in the kernel, such that $R^{\flat}=W(R^{\flat})/(p)$ is $\xi$-complete.
\end{enumerate}
\end{proposition}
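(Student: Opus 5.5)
I will prove the cycle (1) $\Rightarrow$ (2) $\Rightarrow$ (3) $\Rightarrow$ (4) $\Rightarrow$ (1).

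\emph{(1) $\Rightarrow$ (2).} Let $S\to R$ be a surjection from a perfectoid ring. Then $S/pS$ is semiperfect, so its quotient $R/pR$ is semiperfect as well. For the element $\varpi$: by the standard structure of perfectoid rings (\cite[Lemma 3.9]{bhatt2018Integral}) there is $\varpi_S\in S$ with $\varpi_S^p=pu_S$ and $u_S\in S^\times$. Its image in $R$ has the required property, because units map to units.

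\emph{(2) $\Rightarrow$ (3).} Since $\varpi^p=pu$, we have $p\in\varpi^pR$, so $R$ is $\varpi$-adically complete. I will lift $\varpi$ to a compatible system of $p$-power roots. Using semiperfectness of $R/pR$, choose $\varpi^\flat\in R^\flat=\varprojlim_{x\mapsto x^p}R/pR$ lifting the class of $\varpi$, and set $\pi=(\varpi^\flat)^\sharp\in R$. Then $\pi\equiv\varpi \bmod p$, so $\pi^p\equiv\varpi^p \bmod p\varpi R$. This gives $\pi^p=pu'$ with $u'\equiv u \bmod \varpi$, and $u'$ is a unit by $\varpi$-completeness. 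Now let $S$ be the $p$-completion of $\Z_p[T^{1/p^\infty}]/(T^p-p)$ after adjoining a suitable unit, or more simply $S=\Z_p^{\mathrm{cycl}}$-type data. The cleanest choice is $S=W(R^\flat)$-free: take $S$ to be the perfectoidization of $\Z_p[p^{1/p^\infty}]^\wedge$ twisted by the unit. Concretely, I will map $A=\Z_p[x^{1/p^\infty}]^{\wedge}/(x-p\cdot(\text{unit}))$ into $R$, with $x^{1/p^n}\mapsto (\varpi^{\flat 1/p^{n-1}})^\sharp$. The unit must itself have a compatible system of $p$-th roots. To arrange this, I rewrite the problem through $A_{\inf}$ instead, and this motivates going via (4).

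\emph{(2) $\Rightarrow$ (4), replacing the construction above.} Surjectivity of $\theta$ follows from semiperfectness of $R/pR$ together with $p$-completeness of both sides, by successive approximation: $\theta$ is surjective mod $p$, and $A_{\inf}(R)$ is $p$-complete. For $\xi$, take $\varpi^\flat$ as above. Then $\theta([\varpi^\flat]^p)=\pi^p=pu'$. Choose $v\in A_{\inf}(R)$ with $\theta(v)=u'$; it is invertible by \Cref{inv_Ainf}. Set $\xi=[\varpi^\flat]^p-pv$. This $\xi$ is distinguished because its $\delta$-coefficient is a unit, and $\xi\in\ker\theta$. The $\xi$-completeness of $R^\flat$ is \Cref{automatic_completensess}, applied to $R/pR$ with $\xi \bmod p=(\varpi^\flat)^p\in\ker(R^\flat\to R/pR)$.

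\emph{(4) $\Rightarrow$ (1).} By \Cref{completeness_Ainf}, $A_{\inf}(R)$ is $(p,\xi)$-complete. The ring $A_{\inf}(R)$ is perfect and $p$-torsion-free with a distinguished element $\xi$, so $(A_{\inf}(R),(\xi))$ is a perfect prism and $S=A_{\inf}(R)/\xi$ is perfectoid \cite[Theorem 3.10]{bhatt2018Integral}. Surjectivity of $\theta$ then gives a surjection $S\to R$.

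\emph{(3) $\Rightarrow$ (2).} A perfectoid $S$ contains some $\varpi_S$ with $\varpi_S^p=pu_S$, and we take its image in $R$.

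The cycle is closed as (1) $\Rightarrow$ (3) $\Rightarrow$ (2) $\Rightarrow$ (4) $\Rightarrow$ (1), where (1) $\Rightarrow$ (3) is trivial. The main obstacle is the step (2) $\Rightarrow$ (4): replacing $\varpi$ by an element $\pi$ with a compatible system of $p$-power roots while keeping the unit condition. This is handled by the $\varpi$-adic completeness argument together with \Cref{inv_Ainf}.
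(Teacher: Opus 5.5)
Your proof is correct and is essentially the paper's argument. The key step (2)$\Rightarrow$(4) uses the same element $\xi=[\,\cdot\,]-pv$, with $v$ a unit by \Cref{inv_Ainf} and $\xi$-completeness coming from \Cref{automatic_completensess}, and (4)$\Rightarrow$(1) is the same quotient $A_{\inf}(R)/(\xi)$. The two small differences are that you reach (4) from (3) via the trivial (3)$\Rightarrow$(2) rather than the paper's functoriality argument along $A_{\inf}(S)\to A_{\inf}(R)$, and that you reprove the needed case of \cite[Lemma 3.9]{bhatt2018Integral} by hand.

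Tidy up your (2)$\Rightarrow$(3) paragraph. Its first half, the construction of $\varpi^\flat$, $\pi$ and $u'$, is correct and is what (2)$\Rightarrow$(4) uses. Its second half, the attempted choice of $S$, is incoherent but unused, so you should delete it.
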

\begin{proof}We first show that (3) holds if and only if (4) does. If (4) holds, then take $\xi\in \ker(\theta)$ distinguished such that $R^{\flat}=W(R^{\flat})/(p)$ is $\xi$-complete. Then $A_{\inf}(R)/(\xi)$ is perfectoid (see \cite[Section 2.2.1]{cesnavicius2023purityflatcohomology}) and maps to $R$. Also quotienting by $p$ gives that $R^{\flat}\to R/pR$ is surjective, implying that $R/pR$ is semiperfect. If (3) holds then we have a commutative diagram
\[\begin{tikzcd}
A_{\inf}(S) \arrow[r] \arrow[d] & A_{\inf}(R) \arrow[d] \\
S \arrow[r]                    & R                   
\end{tikzcd}\]
implying that the image of the distinguished element in $A_{\inf}(S)$ which generates the kernel of the map $A_{\inf}(S)\to S$ is in the kernel of the map $A_{\inf}(R)\to R$. Quotienting by $p$, we get that $\xi$ is in the kernel of $R^{\flat}\to R/pR$ so \Cref{automatic_completensess} concludes.

Note that (4) implies (1) is clear, take $S=A_{\inf}(R)/(\xi)$. That (1) implies (2) also. We finally show that (2) implies (4). Consider the map
\[\theta\colon A_{\inf}(R)\to R.\]
As $R/pR$ is semiperfect, the above is surjective modulo $p$, therefore surjective by $p$-completeness. By \cite[Lemma 3.9]{bhatt2018Integral}, we can suppose that there is $\pi \in R^{\flat}$ with $\sharp(\pi)=\varpi'$ and $\varpi'=pu'$ for some unit $u'\in R^{\times}$.
Let $v$ be a lift of the unit $u'$. The element $v$ is a unit by \Cref{inv_Ainf}. Now $\xi=[\pi]-pv$ is distinguished and in the kernel -- also $\pi\in \ker(R^{\flat}\to R/pR)$ because this map is $\theta$ modulo $p$, which concludes by \Cref{automatic_completensess}.
\end{proof}

We also remark that we can classify Noetherian $p$-complete semiperfectoid rings.

\begin{lemma}\label{Noeth_semi_perfect}Let $R$ be a ring which is semiperfect and Noetherian. Then the natural maps $R^{\flat}\to R\to R_{\perf}$ are isomorphisms and those rings are a finite product of perfect fields.
    
\end{lemma}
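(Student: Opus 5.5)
The strategy is to show first that $R$ is reduced, and then that it is zero-dimensional. From these two facts the structure theorem for reduced Noetherian zero-dimensional rings gives a finite product of fields, and the remaining identifications are formal.

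\textbf{Step 1: $R$ is reduced.} Here $R$ has characteristic $p$, since semiperfectness means the Frobenius $F\colon R\to R$ is surjective. The nilradical $N$ of $R$ is finitely generated because $R$ is Noetherian, so $N^m=0$ for some $m$. Choose $e$ with $p^e\geq m$; then $x^{p^e}=0$ for every $x\in N$. Now take $x\in N$. By surjectivity of $F^e$ we can write $x=y^{p^e}$. Then $y$ is nilpotent, so $y\in N$, and hence $x=y^{p^e}=0$. Thus $N=0$ and $R$ is reduced, so $F$ is injective. Being also surjective, $F$ is bijective and $R$ is perfect.

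\textbf{Step 2: the maps $R^{\flat}\to R\to R_{\perf}$ are isomorphisms.} Since $R$ is perfect, $R_{\perf}=\operatorname{colim}_F R$ is a colimit along isomorphisms, so $R\to R_{\perf}$ is an isomorphism. Likewise $R^{\flat}=\varprojlim_F R$ is a limit along isomorphisms, so the projection $R^{\flat}\to R$ is an isomorphism.

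\textbf{Step 3: $R$ is a finite product of perfect fields.} I expect this to be the only step with real content. A perfect Noetherian ring of characteristic $p$ must have dimension $0$. To see this, let $\mathfrak{p}\subset\mathfrak{q}$ be primes and localize at $\mathfrak{q}$; the localization $A=R_{\mathfrak q}$ is a perfect Noetherian local domain.

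I would argue as follows. Let $\mathfrak m$ be the maximal ideal of $A$. Since $A$ is perfect, every $x\in\mathfrak m$ has a $p$-th root $x^{1/p}$, which again lies in $\mathfrak m$ because $\mathfrak m$ is radical. Hence $x\in\mathfrak m^p\subseteq\mathfrak m^2$, so $\mathfrak m=\mathfrak m^2$. By Nakayama's lemma, using that $\mathfrak m$ is finitely generated, $\mathfrak m=0$.

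So every localization at a prime is a field, and in particular $R$ has dimension $0$. A reduced Noetherian ring of dimension $0$ is Artinian and reduced, hence a finite product of fields. Each factor is perfect because $R$ is, which completes the proof.
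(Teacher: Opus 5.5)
Your proof is correct and follows the paper's proof essentially step by step: reducedness comes from a uniform nilpotency exponent combined with $p^e$-th roots, and the conclusion comes from dimension zero plus the structure theorem for reduced Artinian rings. The one variation is in the dimension-zero step, where the paper shows a perfect Noetherian domain is a field via the stabilizing chain $(s)\subseteq(s^{1/p})\subseteq\cdots$, while you get $\mk=\mk^2$ in $R_{\qk}$ and apply Nakayama. Calling $R_{\qk}$ a ``domain'' is unjustified at that point, but your Nakayama argument never uses it.
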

\begin{proof}
By Noetherianity, let $N\in \N$ such that for every nilpotent $x\in R$, then $x^{p^N}=0$. Now let $x^{1/p^N}$ a $p^N$-th root of $x$. As this element is also nilpotent, we conclude that $x=(x^{1/p^N})^{p^N}=0$. Therefore, $R$ is perfect.
Note that a perfect Noetherian domain $S$ is a field. Indeed take $s\in S$ non-zero. Then the sequence of ideals $(s^{1/p^n})_{n\in \N}$ stabilizes, so without loss of generality we have $s=s^pt$ for some $t\in S$, and then $1=s^{p-1}t$, concluding.
So every prime of $R$ is maximal and as $R$ is Noetherian of Krull dimension zero and perfect, $R$ is a finite product of perfect fields.
\end{proof}

\begin{lemma}\label{charectization_pfd_charp}
    Let $R$ be a $p$-complete semiperfectoid ring such that the natural map
    \[R^{\flat}\to R/pR\]
    is injective. Then $pR=0$.
\end{lemma}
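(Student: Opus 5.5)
We will use the description of $R$ from \Cref{intrisic_semipfd}. Since $R$ is a $p$-complete semiperfectoid ring, the map $\theta\colon A_{\inf}(R)\to R$ is surjective. Its kernel contains a distinguished element $\xi=[\pi]-pv$, where $v\in A_{\inf}(R)^{\times}$ and $\pi\in R^{\flat}$ satisfies $\pi^\sharp=\varpi'=pu'$ with $u'\in R^{\times}$. The plan is to show that the unit multiple $pu'$ of $p$ vanishes in $R$, which forces $pR=0$.

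First reduce modulo $p$. The map $R^{\flat}=A_{\inf}(R)/p\to R/pR$ is $\theta$ modulo $p$, so it sends $\pi$ to the class of $\pi^\sharp=pu'$, which is zero in $R/pR$. Hence $\pi\in\ker(R^{\flat}\to R/pR)$. By hypothesis this kernel is zero, so $\pi=0$ in $R^{\flat}$.

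Now apply $\sharp$. The map $\sharp\colon R^{\flat}\to R$ is multiplicative and sends $0$ to $0$, so $pu'=\varpi'=\pi^\sharp=0^\sharp=0$ in $R$. Since $u'$ is a unit, $p=0$ in $R$, that is, $pR=0$.

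The only point needing care is the compatibility used in the first step: we must check that $R^{\flat}\to R/pR$ agrees with $x\mapsto x^\sharp \bmod p$, equivalently with the first-coordinate projection from $\varprojlim_{x\mapsto x^p}R/pR$. This is standard, because $\theta([x])=x^\sharp$. We do not even need $\xi$ itself, only the existence of $\pi$ from \cite[Lemma 3.9]{bhatt2018Integral}. That lemma applies because $R$ is semiperfectoid, so there is an element $\varpi$ with $\varpi^p=pu$ by (1)$\Rightarrow$(2) of \Cref{intrisic_semipfd}, and $R/pR$ is semiperfect.
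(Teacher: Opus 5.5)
Your proof is correct and is essentially the paper's argument. The paper writes the distinguished element of \Cref{intrisic_semipfd}~(4) as $\xi=[\pi]+pu$ and notes that $\pi$ dies in $R/pR$, so $\pi=0$ by injectivity; then $pu\in\ker\theta$ with $u$ a unit gives $pR=0$. You run the same steps through $\sharp$ instead of $\theta$ and skip $\xi$, which is only a cosmetic difference.
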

\begin{proof} Consider the canonical surjective map 
\[\theta\colon A_{\inf}(R)\to R,\]
and let $\xi=[\pi]+pu$ where $u\in A_{\inf}(R)$ is a unit and $\pi\in R^{\flat}=R/pR$ an element, such that $\xi\in \ker(\theta)$ as in \Cref{intrisic_semipfd}, (4).
 Now $[\pi]$ is sent to zero in $R/pR$, implying that $\pi=0$ in $R^{\flat}$ by the injectivity hypothesis. Now it concludes as $R=A_{\inf}(R)/pA_{\inf}(R)$.
    
\end{proof}

\begin{cor}\label{Noeth_pfd_rings}
   Let $R$ be a $p$-complete semiperfectoid ring such that $R/pR$ is Noetherian. Then $R$ is a finite product of perfect fields.
\end{cor}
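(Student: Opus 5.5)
The plan is to reduce to the characteristic \(p\) situation and then apply \Cref{Noeth_semi_perfect} and \Cref{charectization_pfd_charp}.

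\textbf{Step 1: the structure of \(R/pR\).} By \Cref{intrisic_semipfd}, the ring \(R/pR\) is semiperfect. By hypothesis it is also Noetherian. So \Cref{Noeth_semi_perfect} applies and shows that the natural maps
\[(R/pR)^{\flat}\to R/pR\to (R/pR)_{\perf}\]
are isomorphisms. Hence \(R/pR\) is a finite product of perfect fields.

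\textbf{Step 2: identify the map in \Cref{charectization_pfd_charp}.} Recall that \(R^{\flat}=\varprojlim_{x\mapsto x^p}R/pR=(R/pR)^{\flat}\). Under this identification, the natural map \(R^{\flat}\to R/pR\) is exactly the projection \((R/pR)^{\flat}\to R/pR\). By Step 1 this projection is an isomorphism, so in particular it is injective.

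\textbf{Step 3: conclude.} \Cref{charectization_pfd_charp} now gives \(pR=0\). Therefore \(R=R/pR\), which by Step 1 is a finite product of perfect fields.

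The only point requiring care is Step 2, namely that the map \(R^{\flat}\to R/pR\) appearing in \Cref{charectization_pfd_charp} really is the first projection \((R/pR)^{\flat}\to R/pR\) that \Cref{Noeth_semi_perfect} controls. This holds because both maps are the reduction modulo \(p\) of the map \(\theta\) used in \Cref{intrisic_semipfd}.
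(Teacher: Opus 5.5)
Your proof is correct and is the paper's argument. The paper proves this in one line by combining \Cref{Noeth_semi_perfect} (applied to $R/pR$) with \Cref{charectization_pfd_charp}; you have simply made explicit that the map $R^{\flat}=(R/pR)^{\flat}\to R/pR$ in the latter lemma is the projection that the former shows to be an isomorphism.
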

    
\begin{proof} Combine \Cref{Noeth_semi_perfect} applied to $R/pR$ and \Cref{charectization_pfd_charp}.


    
\end{proof}





\subsection{Semiperfectoid rings and $p$-root closure}

As an application of arc descent, and more precisely the pullback square from \Cref{812prisms}, we show that we can recover that the perfectoidization of a $p$-torsion-free semiperfectoid ring $S$ is the $p$-completion of the $p$-root closure of $S$ in $S[1/p]$. This  revisits \cite[Theorem 1.3]{ishizuka2024Calculation} in a more concise way.


In general, if $A\subset B$ is a subring, then the \emph{$p$-root closure} of $A$ in $B$ is the subring defined    the union 
\[\bigcup_{i\in \N}C_i\]
where $C_0=A$ and $C_i\subset B$ is generated as a $C_{i-1}$-algebra by elements in $x\in B$ such that $x^p\in C_{i-1}$ for $i\geq 1$. This is a special case of the total $n$-root closure introduced in \cite{anderson1990Root}.

If $A$ is $p$-torsion-free ring, then the $p$-root closure of $A$ in $A[\frac{1}{p}]$ has actually a simpler description, as proved in \cite[Proposition 1]{roberts2008Root}. It is equal to
\begin{equation}\label{noaddneedRob}
    \{x\in A[\frac{1}{p}]\mid \exists m\in \N \quad x^{p^m}\in A\}
\end{equation}

What is non-evident is that the above is actually a ring. Namely if $x,y$ are in this set then $(x+y)^p=x^p+y^p+rp$ where $r\in A[\frac{1}{p}]$ is some element. This actually works as explained in \cite[Proposition 1]{roberts2008Root}.

We now make use \Cref{812prisms} to get that the perfectoidization  of a $p$-torsion-free semiperfectoid ring $S$ is the $p$-completion of the $p$-root closure of $S$ in $S[1/p]$ as in \cite[Theorem 1.3]{ishizuka2024Calculation}. These proofs are different. See \Cref{DiffRyoPaper} for the difference.

\begin{thm}
\label{semipfdproot}
Let $S$ be a $p$-torsion-free ring whose \(p\)-adic completion is a semiperfectoid ring. Then $S\to S_{\pfd}$ is identified to the natural map to the $p$-completion of the $p$-root closure of $S$ in $S[1/p]$.
    
\end{thm}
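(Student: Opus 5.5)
Write $S'$ for the $p$-root closure of $S$ in $S[1/p]$, described by \eqref{noaddneedRob}, and $T\defeq (S')^{\wedge p}$. The plan has three steps: show that $T$ is perfectoid, reduce the comparison with $S_{\pfd}$ to a statement on $S[1/p]$ via \Cref{812prisms}, and settle that statement by the $p$-root closedness of $S'$.

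\emph{Step 1: $T$ is perfectoid.} Since $S^{\wedge p}$ is semiperfectoid, \Cref{intrisic_semipfd} gives $\varpi$ with $\varpi^p=pu$, $u$ a unit, and $S/pS$ is semiperfect. So $S'/pS'$ is semiperfect: every $x\in S'$ can be written $x=y^p+pz$. I then check that Frobenius $S'/\varpi S'\to S'/\varpi^p S'$ is injective. If $x^p\in pS'$, then $(x/\varpi)^p=x^p/(pu)\in S'$, so $x/\varpi\in S'$ by \eqref{noaddneedRob}. Since $S'$ is $p$-torsion-free, its $p$-completion $T$ is $p$-torsion-free, and these properties pass to $T$. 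Hence $T$ is perfectoid, by the usual criterion for $p$-torsion-free rings, e.g.\ \cite[Lemma 3.10]{bhatt2018Integral}. The map $S\to T$ then factors $S\to S_{\pfd}\to T$, and $S_{\pfd}$ is discrete because $S^{\wedge p}$ is semiperfectoid.

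\emph{Step 2: excision.} The map $S\to S'$ is integral, since each element has a $p^m$-th power in $S$, and it is an isomorphism after inverting $p$. Apply \Cref{812prisms} with $S_0=S$, $S_0'=S'$, $I=(p)$, over a perfectoid ring mapping onto $S^{\wedge p}$. This gives the pullback square
\[ S_{\pfd}\simeq S'_{\pfd}\times_{(S'/p)_{\pfd}}(S/p)_{\pfd}. \]
By Step 1, $S'_{\pfd}=T$. The rings $S/p$ and $S'/p$ are semiperfect, so their perfectoidizations are their perfections $(S/p)_{\perf}$ and $(S'/p)_{\perf}$. The theorem therefore reduces to showing that $(S/p)_{\perf}\to (S'/p)_{\perf}$ is an isomorphism, together with the vanishing of the $\pi_{-1}$ term in the derived pullback.

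\emph{Step 3: comparing the perfections.} This is the main obstacle. Surjectivity means: for $x\in S'$, some $x^{p^m}$ lies in $S+pS'$. This is immediate, since already $x^{p^m}\in S$.

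Injectivity means: if $s\in S$ and $s\in pS'$, then some $s^{p^n}$ lies in $pS$. I would argue as follows. We have $s/p\in S'$, so $(s/p)^{p^m}\in S$, that is, $s^{p^m}\in p^{p^m}S\subset pS$.

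Surjectivity of $T\oplus(S/p)_{\perf}\to (S'/p)_{\perf}$ is already clear, so the $\pi_{-1}$ term vanishes. Hence $S_{\pfd}\simeq T\times_{(S'/p)_{\perf}}(S'/p)_{\perf}=T$, and the resulting isomorphism is compatible with the natural map $S\to T$.

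The delicate points are the nilpotence and perfection bookkeeping in Step 3, and verifying that the hypotheses of \Cref{812prisms} hold after $p$-completion. If I cannot verify those hypotheses, I would instead argue directly via \Cref{archull}: $T$ embeds into a product of perfectoid valuation rings $V_x$ with $p\notin\supp(x)$, as in \Cref{pfd_is_p_torsionfree}, and there the $p$-root closure is automatically contained in the image of $S_{\pfd}$.
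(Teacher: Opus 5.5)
Your Steps 2 and 3 are the paper's proof. You apply \Cref{812prisms} to the integral map $S\to S'$, which is an isomorphism after inverting $p$, with $I=(p)$. You then compare $(S/p)_{\perf}=S/\sqrt{pS}$ with $(S'/p)_{\perf}$ using \eqref{noaddneedRob}. Your injectivity argument ($s/p\in S'$, hence $s^{p^m}\in p^{p^m}S$) and your surjectivity argument are exactly the paper's. One small point: the remark about a $\pi_{-1}$ term is unnecessary, since once the bottom arrow is an isomorphism the derived pullback is just $T$.

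The difference is Step 1. The paper does not prove that $T$ is perfectoid; it cites \cite[Proposition 2.1.8]{cesnavicius2023purityflatcohomology}. Your self-contained replacement has a genuine gap.

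\emph{Semiperfectness of $S'/pS'$ is not justified.} The sentence ``So $S'/pS'$ is semiperfect'' does not follow from semiperfectness of $S/pS$, because elements of $S'$ need not lie in $S$. For $x\in S'$ you only know $x^{p^m}\in S$ for some $m$. Writing $x^{p^m}\equiv b^{p^{m+1}}$ modulo $pS$ with $b\in S$ gives $(x-b^p)^{p^m}\in pS'$. That yields $x\equiv b^p$ only modulo $\sqrt{pS'}$, not modulo $pS'$. Upgrading this to a congruence modulo $p$, when $m$ depends on $x$, is the nontrivial content of the cited proposition. Without it, $T$ being perfectoid, and hence $S'_{\pfd}=T$, is not established.

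\emph{The element $\varpi$ does not live in $S'$.} \Cref{intrisic_semipfd} applies to $S^{\wedge p}$, not to $S$. So $\varpi$ and $u$ live in $S^{\wedge p}$; $\varpi$ need not lie in $S'$, and $u$ need not be a unit there. Hence ``$(x/\varpi)^p=x^p/(pu)\in S'$'' does not make sense as written. This part is fixable: run the Frobenius-injectivity argument in $T$, after checking that $T$ is still $p$-root closed in $T[1/p]$. That check works by approximating an element of $T$ modulo a high power of $p$ by an element of $S'$, using $S'/p^nS'\cong T/p^nT$.

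To close the gap, either cite \cite[Proposition 2.1.8]{cesnavicius2023purityflatcohomology} as the paper does, or supply both arguments. With that, your proof coincides with the paper's.
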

\begin{proof}
    Let's denote by $S\to \widetilde{S}$ the $p$-completion of the $p$-root closure of $S$ in $S[1/p]$. Now, using \Cref{812prisms}, and \cite[Proposition 2.1.8]{cesnavicius2023purityflatcohomology} which ensures that $\widetilde{S}$ is perfectoid, we get a pullback square
    \[\begin{tikzcd}
S_{\pfd} \arrow[r] \arrow[d] & \widetilde{S} \arrow[d]               \\
S/\sqrt{pS} \arrow[r]      & \widetilde{S}/\sqrt{p\widetilde{S}}
\end{tikzcd}\]

So to show the claim, it suffices to show that 
\[S/\sqrt{pS}\to \widetilde{S}/\sqrt{p\widetilde{S}}\]
is an isomorphism. Note that both these rings are perfect being semiperfect and reduced. Also, note that because $p=0$ in those rings we can disregard the $p$-adic completion on the ring on the right. Namely if $\overline{S}$ denotes the uncompleted $p$-root closure then $\overline{S}/p\overline{S}\to \widetilde{S}/p\widetilde{S}$ is an isomorphism, and the same then holds at reduction. So it suffices to show that
\[S/\sqrt{pS}\to \overline{S}/\sqrt{p\overline{S}}\]
is an isomorphism.

    Let $x\in S$ be an element which is sent to zero by the above map. Then there is some $M>0$ such that $x^{p^M}=py$ for some $y\in \overline{S}$. But then $x^{p^{N+M}}\in pS$ for some $N>0$ using \Cref{noaddneedRob}. Using that $S/\sqrt{pS}$ is perfect, it implies that the reduction modulo $\sqrt{pS}$ of $x$ is zero. To show surjectivity, let $y\in \overline{S}$. Because $y^{p^M}\in S$ for some $M$ and that these rings are perfect this shows that $y$ has a preimage. This concludes.
\end{proof}

\begin{rem} \label{DiffRyoPaper}
   Let us explain the difference between the proof of \Cref{semipfdproot} given here and the proof of \cite[Theorem 1.3]{ishizuka2024Calculation}.
    In this paper, the key ingredients are the pullback square (\Cref{812prisms}) proved in \cite{bhatt2022Prismsa} and the fact that the \(p\)-adic completion of the \(p\)-root closure of a semiperfectoid ring is perfectoid proved in \cite[Proposition 2.1.8]{cesnavicius2023purityflatcohomology}. The proof of \Cref{semipfdproot} is more direct and conceptual than the proof of \cite[Theorem 1.3]{ishizuka2024Calculation}.

    On the other hand, in \cite{ishizuka2024Calculation}, the first-named author of this paper first proved a criterion for such isomorphisms (\cite[Theorem 5.7]{ishizuka2024Calculation}) in which he assumes the \(p\)-adic completion of the \(p\)-root closure becomes perfectoid. When applying this criterion for the semiperfectoid case, \cite[Proposition 2.1.8]{cesnavicius2023purityflatcohomology} is used too. In contrast, the arc topology is not used.
    The proof only uses the universality of the perfectoidization. A relation to uniform completion is also studied. 
\end{rem}

\begin{rem}\label{whathappensactually}
    Both taking the $p$-root closure and $p$-completing are essential in the process of perfectoidization of semiperfectoid rings.
    We demonstrate on a simple example how this behaves. Let $R$ be a perfectoid ring. Consider the semiperfectoid ring
    \[S=R\abracket{x^{1/p^{\infty}}}/(x)\]
    Then inverting $p$ gives
    \[R\left[\frac{1}{p}\right]\abracket{x^{1/p^{\infty}}}/(x).\]
    Note that any element in $S[1/p]$ which is the image of an element in $(x^{1/{p^\infty}})R[\frac{1}{p}][x^{1/p^{\infty}}]$ is in the $p$-root closure of $S$ in $S[1/p]$. Indeed, these elements are nilpotent. But note also that all these elements are infinitely $p$-divisible: if $\epsilon$ is such an element then $\epsilon/p^n$ is also such an element for any $n\in \N$. Therefore, they will be killed in the $p$-completion. An intermediate step to the process of taking $p$-root closure and then $p$-completion will therefore at least be taking the quotient by
    \[S'\defeq R\abracket{x^{1/p^{\infty}}}/\overline{(x^{1/p^{\infty}})}\cong R.\]
    As this ring is perfectoid, the quotient map $S\to S'$ has to be the perfectoidization, so we conclude that the above was in fact describing the whole process.

    The takeway in this case is as follows: nilpotent elements become infinitely $p$-divisible in the $p$-root closure, and therefore zero in the $p$-completion.
\end{rem}

\section{Perfectoid ideals}\label{pfdideals_section}

In this section we proof some useful lemmas about perfectoid ideals.
\begin{rem}\label{def_pfdization_ideal}
    Recall that for any ideal $I\subset R$ then there is a initial perfectoid ring $(R/I)_{\pfd}$ with an universal surjective map $R\to (R/I)_{\pfd}$ which is initial with respect to maps to perfectoid rings. See \cite[Remark 1.13]{bhatt2022Prismsa}. To be really precise, we can first consider the kernel $\overline{I}$ of the surjective map $R\to (R/I)^{p\wedge}$ to the classical $p$-completion of $R/I$. As perfectoid rings are all classicaly $p$-complete, this is anyway an intermediate step to the perfectoidization. This ideal is the closure in $R$ of $I$ for the $p$-adic topology on $R$. Now $\overline{I}$ is a derived $p$-complete ideal so that we can apply \cite[Remark 1.13]{bhatt2022Prismsa} as written.
\end{rem}

\begin{definition}[{cf.\ \cite[Definition 2.1 and Lemma 2.2]{Dine_2024}}]\label{pfd_ideal}Let $R$ be a perfectoid ring. An ideal $I\subset R$ is said to be \emph{perfectoid} if $I=I_{\pfd}$.
    
\end{definition}

\begin{rem}\label{rad_and_p_comp} Because perfectoid rings are $p$-complete and reduced \cite[Section 2.1.2 and Section 2.1.3]{cesnavicius2023purityflatcohomology}, any perfectoid ideal is $p$-complete and radical.
    
\end{rem}

Note for example the following property, which implies that the perfectoidization of a non-zero semiperfectoid ring is non-zero.

\begin{lemma}\label{proper_Ipfd}
Let $R$ be a perfectoid ring. 
\begin{enumerate}
    \item Let $\mk$ be a maximal ideal. Then $R/\mk$ is perfect in characteristic $p$.
    \item If $I\subset R$ is a proper ideal, then $I_{\emph{pfd}}$ is also proper.
\end{enumerate}
    \begin{proof}
        For the first assertion, note that as $\mk$ is maximal, it is closed. The closure of an ideal is again ideal. If $\mk$ is not closed, then $\overline{\mk}=R$. Now if $1$ was a limit of elements in $\mk$, there would be an element $m\in \mk$ with $1-m\in pR$ and $m$ would be invertible, a contradiction. Therefore $R/\mk$ is a $p$-adically complete field, which implies that it is a field of characteristic $p$. Because $R$ is perfectoid, it is also semiperfect, therefore perfect.

        If $I$ is proper, then $I\subset \mk$ for some maximal ideal $\mk$. But then $I_{\text{pfd}}\subset \mk_{\text{pfd}}=\mk$.
    \end{proof}
\end{lemma}

We will now prove \Cref{Roxane} which is a really useful lemma about perfectoid ideals. Before, we simply recall the following fact.

\begin{lemma}\label{tiltcomletion} Let $B$ be a perfect algebra in characteristic $p$. Let $\pi\in B$ be an element. Then we have a natural isomorphism
    \begin{equation*}
        \left(B/\pi B\right)^{\flat}\to \left(B \right)^{\wedge,\pi}.
    \end{equation*}
\end{lemma}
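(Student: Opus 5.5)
The plan is to write down the map explicitly on both sides and check that it is bijective, using only that $B$ is perfect. Recall that $(B/\pi B)^{\flat}=\varprojlim_{x\mapsto x^p}B/\pi B$, and that $B^{\wedge,\pi}=\varprojlim_n B/\pi^{n}B$. Since $B$ is perfect, the ideals $\pi^{p^n}B$ are cofinal with the powers $\pi^n B$, so
\[B^{\wedge,\pi}=\varprojlim_n B/\pi^{p^n}B.\]

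First, for each $n$ the Frobenius $\varphi^n\colon B\to B$, $b\mapsto b^{p^n}$, is a ring isomorphism sending $\pi B$ onto $\pi^{p^n}B$. It therefore induces ring isomorphisms
\[\varphi^n\colon B/\pi B\xrightarrow{\sim} B/\pi^{p^n}B.\]
Next I would check that these isomorphisms intertwine the two transition maps. On the tilt side the transition map is Frobenius $B/\pi B\to B/\pi B$. On the completion side it is the projection $B/\pi^{p^{n+1}}B\to B/\pi^{p^n}B$. Concretely, given an element $x=(x_0,x_1,\dots)$ of the tilt with $x_{n+1}^p=x_n$, its image is $(x_n^{p^n})_n$, and these entries are compatible under reduction because $x_{n+1}^{p^{n+1}}=(x_{n+1}^p)^{p^n}=x_n^{p^n}$. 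Passing to the limit then gives a ring isomorphism
\[(B/\pi B)^{\flat}\xrightarrow{\sim}\varprojlim_n B/\pi^{p^n}B=B^{\wedge,\pi}.\]

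To make this more intrinsic, I would write the inverse directly. A compatible system $(b_n \bmod \pi^{p^n})_n$ is sent to $\bigl(b_n^{1/p^n}\bmod \pi\bigr)_n$. This is well defined because $p^n$-th roots in a perfect ring carry $\pi^{p^n}B$ into $\pi B$.

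The only point needing care is making the diagram of transition maps commute, with the correct indexing direction; beyond that everything is formal. Naturality in $(B,\pi)$ is immediate from the formula.
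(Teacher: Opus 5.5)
Your proposal is correct and is essentially the paper's proof: the paper exhibits exactly this morphism of inverse systems, with vertical maps $(-)^{p^n}\colon B/\pi B\to B/\pi^{p^n}B$ intertwining Frobenius on the tilt side with the projections on the completion side, and notes that it is an isomorphism at each level. One minor point: cofinality of the ideals $\pi^{p^n}B$ among the $\pi^nB$ does not use perfectness; perfectness is needed only to make each $(-)^{p^n}$ an isomorphism $B/\pi B\cong B/\pi^{p^n}B$.
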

\begin{proof}
The morphism of diagrams below is an isomorphism.
\[
    \begin{tikzcd}
{} \arrow[r] & B/\pi B \arrow[r, "(-)^p"] \arrow[d, "(-)^{p^{3}}"] & B/\pi B \arrow[r, "(-)^p"] \arrow[d, "(-)^{p^{2}}"] & B/\pi B \arrow[r, "(-)^p"] \arrow[d, "(-)^p"] & B/\pi B \arrow[d, "="] \\
{} \arrow[r] & B/\pi^{p^3}B \arrow[r]                             & B/\pi^{p^2}B \arrow[r]                             & B/\pi^p B \arrow[r]                           & B/\pi B               
\end{tikzcd}
\]
\end{proof}

The following is to compare to \cite[Lemma 3.3 and Theorem 3.5]{Dine_2024} as well as \cite[Proposition 2.1.11 (c)]{cesnavicius2023purityflatcohomology}. 

\begin{lemma}\label{Roxane}Let $R$ be a perfectoid ring and $I\subset R$ be an ideal. Denote by $\theta\colon A_{\inf}(R)\to R$ the natural surjection, with kernel generated by $\xi$. Let $\varpi^{\flat}\in R^{\flat}$ be the image of $\xi$ modulo $p$ in $R^{\flat}$. Let also $\varpi=\sharp(\varpi^{\flat})$. Then the following are equivalent.
\begin{enumerate}
    \item The ring $R/I$ is perfectoid.
    \item The ideal $I$ is $p$-adically generated by $\sharp(I^{\flat})$. In other words we have $I=\overline{(\sharp(I^{\flat}))}$. More precisely, we have $I=\theta(W(I^{\flat}))$.
    \item There exists a subset $S\subset I$ of elements that admits compatible system of $p$-power roots such that
\[I=\overline{(s^{1/p^{\infty}})_{s\in S}}.\]
\end{enumerate}
\begin{proof}We will show 
\[(1)\implies (2)\implies (3)\implies  (1).\]
Note that $(2)\implies (3)$ is immediate.
\begin{itemize}
    \item $(1)\implies (2).$ If we suppose that $R/I$ is perfectoid, then we have a surjection
    \[A_{\inf}(R)\to A_{\inf}(R/I)\]
    because we can check that this map is surjective modulo $\xi$ by $\xi$-completeness.  Therefore the map 
    \[R^{\flat}\to (R/I)^{\flat}\]
    is surjective being the reduction modulo $p$ of the above map. 

 One checks using the functoriality of $(-)^{\flat}$ seen as compatible system of $p$-power roots\footnote{Which is possible because both rings are $p$-adically complete.} that (one takes this equality as a definition of the tilt of an ideal)
 \begin{equation}\label{titl_ideal_def}
     \ker(R^{\flat}\to (R/I)^{\flat})=\{(r_n)\in R^{\flat} \mid r_n\in I \quad \forall n\in\N\}=I^{\flat}.
 \end{equation}
   In other words the canonical map $R^{\flat}/I^{\flat}\to (R/I)^{\flat}$ is an isomorphism. Now we see that the kernel of the surjective map
    \[A_{\inf}(R)\to A_{\inf}(R/I)\]
 is equal to $W(I^{\flat})$ by inspecting the functoriality of the Witt vectors. Going modulo $\xi$ now shows the claim.
    \item $(3)\implies (1)$. Say that for any $s\in S$ we denote by $x_s\in I^{\flat}$ a choice of compatible system of $p$-power roots of $s$. As $R$ is perfectoid, the map
    \[\sharp\colon R^{\flat}/\varpi^{\flat}R^{\flat}\to R/\varpi R\]
    is an isomorphism. For $s\in S$, the pre-image under this isomorphism $\sharp$ of 
    \begin{equation*}
        s\mod \varpi \in R/\varpi R
    \end{equation*}
    is 
    \begin{equation*}
        x_s \mod \varpi^{\flat}\in R^{\flat}/\varpi^{\flat}R^{\flat}.
    \end{equation*}
    Therefore the preimage by $\sharp$ of the image of $I$ in $R/\varpi R$ is given by the ideal generated by the image of the ideal $J:=(x_{s}^{1/p^n})_{n\geq 0,s\in S}$ in $R^{\flat}/\varpi^{\flat}R^{\flat}$. Therefore we see that
    \[R^{\flat}/(J,\varpi^{\flat})\to R/(I,\varpi)\]
    is an isomorphism. Now, because $R^{\flat}/J$ is perfect, using \Cref{tiltcomletion} and the above, we can identify $(R/I)^{\flat}=(R/(I,\varpi))^{\flat}$ to
    \[R^{\flat}/\overline{(J)}\]
    where the closure is taken in the $\varpi^{\flat}$-adic topology. As a consequence, the map
    \[R^{\flat}\to (R/I)^{\flat}\]
    is surjective. As a consequence $A_{\inf}(R)\to A_{\inf}(R/I)$ is also surjective.
    
    
    Note also that by construction of $J$, we have that $\theta\colon W\left(\overline{J}\right)\to I$ is surjective. We now use the snake lemma on the diagram below to deduce that $\xi A_{\inf}(R)=\ker(\theta_R)\to \ker(\theta_{R/I})$ is surjective, concluding.
    \[\begin{tikzcd}
            &                                     & \ker(\theta_R) \arrow[d] \arrow[r] & \ker(\theta_{R/I}) \arrow[d] \arrow[llddd, dashed] &   \\
0 \arrow[r] & W(\overline{J})\arrow[r] \arrow[d] & A_{\inf}(R) \arrow[r] \arrow[d]    & A_{\inf}(R/I) \arrow[d] \arrow[r]                  & 0 \\
0 \arrow[r] & I \arrow[r] \arrow[d]               & R \arrow[r]                        & R/I \arrow[r]                                      & 0 \\
            & 0                                   &                                    &                                                    &  
\end{tikzcd}\]
\end{itemize}

\end{proof}
    
\end{lemma}

    

We note the following corollary.

\begin{cor}[{cf. \cite[Proposition 2.5]{fayolle2025Centers}}]\label{Pfdizationideal_generated_by_p_power roots}Let $R$ be a perfectoid ring and $I=(i_{\alpha})_{\alpha\in A} \subset R$ be an ideal generated by elements $i_{\alpha}$ which admits a compatible system of $p$-power roots.
Then $I_{\pfd}=\overline{(i_{\alpha}^{1/p^{\infty}})}$.
\end{cor}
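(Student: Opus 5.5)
Set $J\defeq\overline{(i_{\alpha}^{1/p^{\infty}})_{\alpha\in A}}$, the $p$-adic closure of the ideal generated by all chosen $p$-power roots $i_\alpha^{1/p^n}$. The strategy is to prove $I_{\pfd}=J$ by showing that $R\to R/J$ satisfies the universal property of $R\to (R/I)_{\pfd}$ from \Cref{def_pfdization_ideal}.

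\textbf{Step 1: $R/J$ is perfectoid.} Take $S\defeq\{i_\alpha\}_{\alpha\in A}$. Each $i_\alpha$ admits a compatible system of $p$-power roots, and by definition $J=\overline{(s^{1/p^{\infty}})_{s\in S}}$. Implication (3)$\Rightarrow$(1) of \Cref{Roxane} then gives that $R/J$ is perfectoid. Since $I\subset J$, the quotient map $R\to R/J$ factors through $R/I$, and hence also through $(R/I)_{\pfd}$. So $I_{\pfd}\subset J$.

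\textbf{Step 2: $J\subset I_{\pfd}$.} Write $\bar{\imath}_\alpha^{1/p^n}$ for the image of $i_\alpha^{1/p^n}$ in the perfectoid ring $T\defeq(R/I)_{\pfd}=R/I_{\pfd}$. Then $(\bar{\imath}_\alpha^{1/p^n})^{p^n}=\bar{\imath}_\alpha=0$, so $\bar{\imath}_\alpha^{1/p^n}$ is nilpotent. Perfectoid rings are reduced (\Cref{rad_and_p_comp}), so $\bar{\imath}_\alpha^{1/p^n}=0$, and every $i_\alpha^{1/p^n}$ lies in $I_{\pfd}$. Moreover $I_{\pfd}$ is $p$-adically closed, because $R/I_{\pfd}$ is $p$-adically complete (again \Cref{rad_and_p_comp}). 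Therefore $J\subset I_{\pfd}$.

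Combining the two steps gives $I_{\pfd}=J$. The only point needing care is in Step 1: the ideal generated by all $p$-power roots must match exactly the form required in \Cref{Roxane}(3), including taking the $p$-adic closure. This holds by construction, so I expect no real obstacle once \Cref{Roxane} is available.
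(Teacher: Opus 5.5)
Your proposal is correct and matches the paper's proof: \Cref{Roxane}, (3)$\Rightarrow$(1), together with the universal property gives $I_{\pfd}\subset J$. The reverse inclusion follows because $I_{\pfd}$ is radical and $p$-adically closed (\Cref{rad_and_p_comp}).
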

\begin{proof}By \Cref{Roxane}, (3), we see that $\overline{(i_{\alpha}^{1/p^{\infty}})}$ is a perfectoid ideal. Therefore $I_{\pfd}\subset \overline{(i_{\alpha}^{1/p^{\infty}})}$. But also, as $I_{\pfd}$ is radical and $p$-complete (\Cref{rad_and_p_comp}), we have $\overline{(i_{\alpha}^{1/p^{\infty}})}\subset I_{\pfd}$.
    
\end{proof}

We get the following intriguing  corollary from \Cref{Roxane}.

\begin{proposition}\label{pfd_approximation} Let $R$ be a perfectoid ring and $d\in R$ be any element. Then there exists a sequence of elements $(d_n)_{n\in \N}$ in $R^{\flat}$ such that $\sharp(d_n)\in (d)_{\pfd}$ for every $n\in \N$ and
\[d=\sum_{n\in \N}\sharp(d_n)p^n.\]
Also 
\begin{equation*}
    (d)_{\pfd}=\overline{(\sharp(d_n)^{1/p^\infty})_{n \geq 0}}.
\end{equation*}
\end{proposition}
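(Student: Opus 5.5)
We apply \Cref{Roxane} to the perfectoid ideal $J\defeq (d)_{\pfd}$. By definition $R/J=(R/(d))_{\pfd}$ is perfectoid, so the implication $(1)\implies(2)$ of \Cref{Roxane} gives $J=\theta(W(J^{\flat}))$. Here $J^{\flat}=\{(r_n)\in R^{\flat}\mid r_n\in J\ \forall n\}$ is the tilt of the ideal from \eqref{titl_ideal_def}. Since $d\in (d)\subset J$, there is a Witt vector $w\in W(J^{\flat})$ with $\theta(w)=d$.

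Next we expand $w$. Since $R^{\flat}$ is perfect, every Witt vector can be written as $w=\sum_{n\geq 0}[d_n]p^n$. Its Witt components are $d_n^{p^n}$, so $w\in W(J^{\flat})$ means $d_n^{p^n}\in J^{\flat}$. Because $J^{\flat}$ is the kernel of $R^{\flat}\to (R/J)^{\flat}$ and $(R/J)^{\flat}$ is perfect, hence reduced, $J^{\flat}$ is radical; therefore $d_n\in J^{\flat}$. Applying $\theta$, which is continuous and satisfies $\theta([x])=\sharp(x)$, gives
\[
d=\sum_{n\geq 0}\sharp(d_n)p^n .
\]
Finally, $\sharp(d_n)$ is the first coordinate of $d_n$ in the description $R^{\flat}=\varprojlim_{x\mapsto x^p}R$, and $d_n\in J^{\flat}$ means every coordinate lies in $J$. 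Hence $\sharp(d_n)\in J$. This proves the first assertion.

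For the equality of ideals, set $K\defeq \overline{(\sharp(d_n)^{1/p^\infty})_{n\geq 0}}$, where the $p$-power roots are the $\sharp(d_n^{1/p^m})$. By \Cref{Roxane} $(3)\implies(1)$, $K$ is a perfectoid ideal. Each $\sharp(d_n^{1/p^m})$ is a coordinate of $d_n$, so it lies in $J$, and $J$ is closed by \Cref{rad_and_p_comp}; hence $K\subset J$. Conversely, $d=\sum_n\sharp(d_n)p^n$ is a $p$-adic limit of elements of $K$, and $K$ is closed, so $d\in K$. Thus $R\to R/K$ factors through $R/(d)$, and since $R/K$ is perfectoid it factors through $(R/(d))_{\pfd}=R/J$. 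This gives $J\subset K$, so $J=K$.

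The main point needing care is the Witt-vector bookkeeping: checking that membership in $W(J^{\flat})$ forces $d_n\in J^{\flat}$. The components are $d_n^{p^n}$ rather than $d_n$, and this is handled by the radicality of $J^{\flat}$, equivalently by the fact that $J^{\flat}$ is stable under $p$-th roots since its elements are compatible systems of $p$-power roots lying in $J$. One should also confirm the convergence of $\sum_n\sharp(d_n)p^n$ in the $p$-complete ring $R$.
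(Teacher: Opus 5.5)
Your proof is correct and follows the same route as the paper. You apply \Cref{Roxane} to the perfectoid ideal $(d)_{\pfd}$ to write $d=\theta(w)$ with $w\in W(((d)_{\pfd})^{\flat})$. You then use \Cref{Roxane} $(3)\Rightarrow(1)$ together with the universal property of $(d)_{\pfd}$ to get the equality of ideals. Your version is more careful than the paper's: the paper cites (3) where (2) is what is really used, and it leaves implicit the Teichm\"uller-versus-Witt-coordinate bookkeeping, which you handle via radicality of the tilted ideal.
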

\begin{proof}
    Take a subset $S\subset (d)_{\pfd}$ as in  \Cref{Roxane} (3). Therefore as $d\in (d)_{\pfd}$ there exists a sequence of elements $(d_n)_{n\in \N}$ in $R^{\flat}$ such that $\sharp(d_n)\in (d)_{\pfd}$ for every $n\in \N$ and
\[d=\sum_{n\in \N}\sharp(d_n)p^n.\]
Now, using again \Cref{Roxane} (3) we see that $\overline{(\sharp(d_n)^{1/p^\infty})_{n \geq 0}}$ is a perfectoid ideal. Now $(d)\subset \overline{(\sharp(d_n)^{1/p^\infty})_{n \geq 0}}\subset (d)_{\pfd}$. As $\overline{(\sharp(d_n)^{1/p^\infty})_{n\in \N}}$ is a perfectoid ideal containing $d$ we also have $(d)_{\pfd}\subset \overline{(\sharp(d_n)^{1/p^\infty})_{n \geq 0}}$, concluding.
\end{proof}


\begin{rem}\label{pfd_approx_remark} Note that $\theta\colon A_{\inf}(R)\to R$ is surjective, any element $r$ of a perfectoid ring can be written as a sum of the form
\[r=\sum_{n\in \N}\sharp(r_n)p^n\]
for a sequence $(r_n)_{n\in \N}$ in $R^{\flat}$. The key aspect of \Cref{pfd_approximation} is that it provides \emph{prefered approximations} of the element $r$. The authors explain in \cite[Remark 3.19]{pfd_fin_alg} that not any element $\sharp(r_0)$ as above is in $(r)_{\pfd}$ and that \Cref{pfd_approximation} provides not any approximation, only very special ones.

Note that understanding how to produce these special approximations of elements would solve the question of how to construct $I_{\pfd}$ from the ideal $I$.
    
\end{rem}

We will see in \Cref{TorsionExample} that $(px)_{\pfd}=(p)_{\pfd}\cap (x)_{\pfd}$. We note that this is not a surprise because of the following more general phenomenon which is a direct consequence of $p$-complete arc descent.

\begin{lemma}\label{intersection_product_pfdization}
Let $R$ be a perfectoid ring and $I,J\subset R$ two ideals. Then
\[(IJ)_{\pfd}=(I\cap J)_{\pfd}=I_{\pfd}\cap J_{\pfd}\]
   In particular, the intersection of two perfectoid ideals is again perfectoid. 
\end{lemma}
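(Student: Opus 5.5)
The plan is to get the two easy inclusions from monotonicity, and to deduce the remaining inclusion from a valuative description of \(I_{\pfd}\), which in turn comes from \Cref{archull}. Since \(I\mapsto I_{\pfd}\) is monotone (if \(I\subset I'\), the map \(R\to (R/I')_{\pfd}\) factors through \((R/I)_{\pfd}\) by universality), the inclusions \(IJ\subset I\cap J\subset I\) and \(I\cap J\subset J\) give
\[(IJ)_{\pfd}\subset (I\cap J)_{\pfd}\subset I_{\pfd}\cap J_{\pfd}.\]
So everything reduces to proving \(I_{\pfd}\cap J_{\pfd}\subset (IJ)_{\pfd}\).

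For this I will first justify the valuative description stated in the remark after \Cref{pfd_is_p_torsionfree}:
\[K_{\pfd}=\bigcap_{x\in \Spa(R/K,R/K)_{\leq 1}}\supp(x)\]
for any ideal \(K\subset R\), where each \(\supp(x)\) is viewed as a prime of \(R\) containing \(K\). Concretely, let \(\overline{K}\) be the \(p\)-adic closure of \(K\) as in \Cref{def_pfdization_ideal}. Then \(R/\overline{K}\) is semiperfectoid and \(R\to (R/K)_{\pfd}\) is a discrete initial perfectoid ring, so \Cref{archull} applies. Taking the cover \(R/\overline{K}\to \prod_x V_x\) of \Cref{arc_remarks} (2), we get that \((R/K)_{\pfd}\to\prod_x V_x\) is injective. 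Hence \(K_{\pfd}\) is exactly the set of \(r\in R\) killed in every \(V_x\), that is, the intersection of the primes \(\supp(x)\).

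It remains to check that \(x\) ranges over precisely the rank \(\leq 1\) continuous valuations of \(R\) whose support contains \(K\). Such a valuation has \(p\)-adically closed support, so its support contains \(\overline{K}\) as soon as it contains \(K\), and it therefore defines a point of \(\Spa(R/\overline{K},R/\overline{K})_{\leq 1}\). I expect this bookkeeping between \(K\), \(\overline{K}\) and the \(p\)-completions to be the main (mild) obstacle. Alternatively, one can argue directly: any map \(R\to V\) to a perfectoid valuation ring killing \(K\) factors through \((R/K)_{\pfd}\), which shows that \(K_{\pfd}\) is contained in each such support.

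Granting this description, let \(x\) be a rank \(\leq 1\) valuation with \(IJ\subset \supp(x)\). Since \(\supp(x)\) is prime, it contains \(I\) or \(J\). Applying the description to \(K=I\) or \(K=J\), it then contains \(I_{\pfd}\) or \(J_{\pfd}\), and in either case it contains \(I_{\pfd}\cap J_{\pfd}\). Intersecting over all such \(x\) and applying the description to \(K=IJ\) gives \(I_{\pfd}\cap J_{\pfd}\subset (IJ)_{\pfd}\), which closes the chain of inclusions.

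One could instead phrase this via \(p\)-complete arc descent. The map \(R/IJ\to R/I\times R/J\) is a \(p\)-complete arc cover, since any map to a valuation ring (a domain) kills \(I\) or \(J\). The injectivity argument of \Cref{archull} then shows that \((R/IJ)_{\pfd}\to (R/I)_{\pfd}\times (R/J)_{\pfd}\) is injective, and its kernel computation gives the same inclusion.

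Finally, for the last assertion, if \(I\) and \(J\) are perfectoid then \((I\cap J)_{\pfd}=I_{\pfd}\cap J_{\pfd}=I\cap J\), so \(I\cap J\) is perfectoid.
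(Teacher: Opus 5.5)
Your proof is correct and is essentially the paper's argument: the key inclusion comes from the injectivity given by \Cref{archull} for an arc cover by the quotients by $I$ and $J$ (your valuative description is a pointwise form of this), which is exactly what your final alternative does. The only difference is that the paper first identifies $(IJ)_{\pfd}=(I\cap J)_{\pfd}$ through the nilpotent thickening $R/IJ\to R/(I\cap J)$ (using $(I\cap J)^2\subset IJ$ and \cite[Corollary 8.11]{bhatt2022Prismsa}) and then applies the cover to $R/(I\cap J)\to R/I\times R/J$; you instead apply the cover directly to $R/IJ$ and sandwich $(I\cap J)_{\pfd}$ by monotonicity, which slightly streamlines the argument.
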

\begin{proof}
    As $(I\cap J)^2\subset IJ$, it follows that $R/IJ\to R/I\cap J$ is an isomorphism in the ($p$-complete) arc topology -- the first claim now follows from \cite[Corollary 8.11]{bhatt2022Prismsa}. As for the second claim, note that $R/I\cap J\to R/I\times R/J$ is a ($p$-complete) arc cover, which implies that we have an injection at perfectoidization $R/(I\cap J)_{\pfd}\to R/I_{\pfd}\times R/J_{\pfd}$ by \Cref{archull}. This implies now that the kernel of the surjection $R\to R/(I\cap J)_{\pfd}$ is both $(I\cap J)_{\pfd}$ and $I_{\pfd}\cap J_{\pfd}$, concluding.
\end{proof}

\begin{rem}We wrote that $I\cap J$ is perfectoid if $I$ and $J$ are as a conclusion of the lemma because it is a consequence of the above proof. But note that the statement holds more generally for arbitrary intersections of families  $(I_{\alpha})_{\alpha\in A}$ of perfectoid ideals because the semiperfectoid $R/\bigcap_{\alpha\in A}I_{\alpha}$ embeds into the perfectoid $\prod_{\alpha\in A}R/I_{\alpha}$. See the argument in \cite[Lemma 2.3]{Dine_2024}.
    
\end{rem}

We also note the following interesting corollary of \Cref{Roxane}.

\begin{lemma}\label{pfd_ideals_sum}Let $R$ be a perfectoid ring.
\begin{enumerate}
    \item Let $A$ be a set and $(I_{\alpha})_{\alpha\in A}$ be a collection of perfectoid ideals of $R$. Then the $p$-adic closure in $R$ of the sum $\Sigma_{\alpha\in A}I_{\alpha}$ is perfectoid.
    \item Let $I\subset R$ be an ideal which is closed for the $p$-adic topology in $R$. Then $I$ is a perfectoid ideal if and only for any $i\in I$, then $(i)_{\pfd}\subset I$. 
    \item A finite sum of perfectoid ideals is again perfectoid (cf. \cite[Proposition 2.7]{fayolle2025Centers}).
\end{enumerate}
\end{lemma}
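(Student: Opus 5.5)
We derive all three parts from \Cref{Roxane}, using the characterization (3): an ideal $I$ is perfectoid exactly when it is the $p$-adic closure of an ideal generated by elements that admit compatible systems of $p$-power roots, together with all of those roots.

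For (1), \Cref{Roxane} gives, for each $\alpha\in A$, a subset $S_\alpha\subset I_\alpha$ of elements with compatible systems of $p$-power roots such that $I_\alpha=\overline{(s^{1/p^\infty})_{s\in S_\alpha}}$. Put $S=\bigcup_\alpha S_\alpha$. We claim
\[\overline{\textstyle\sum_\alpha I_\alpha}=\overline{(s^{1/p^\infty})_{s\in S}}.\]
The inclusion $\supseteq$ holds because each $s^{1/p^n}$ lies in some $I_\alpha$. For $\subseteq$, the right-hand side is a closed ideal containing each $\overline{(s^{1/p^\infty})_{s\in S_\alpha}}=I_\alpha$. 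It therefore contains their sum and hence its closure. The left-hand side is then perfectoid by the implication (3)$\Rightarrow$(1) of \Cref{Roxane}. The one point to check is that the closure of an ideal in the $p$-adic topology is again an ideal, which is standard.

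For (2), suppose first that $I$ is perfectoid. Then $(i)\subset I$ gives $(i)_{\pfd}\subset I_{\pfd}=I$, since $(-)_{\pfd}$ is monotone: $R\to R/I$ factors through $R/(i)$, so $R\to (R/I)_{\pfd}$ factors through $(R/(i))_{\pfd}$. Conversely, assume $I$ is closed and $(i)_{\pfd}\subset I$ for all $i\in I$. Each $(i)_{\pfd}$ is perfectoid, and
\[I\subset \sum_{i\in I}(i)_{\pfd}\subset I.\]
Taking $p$-adic closures and using that $I$ is closed, $I=\overline{\sum_{i}(i)_{\pfd}}$, which is perfectoid by (1). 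So (2) follows from (1).

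For (3), let $I,J$ be perfectoid. By (1), $\overline{I+J}$ is perfectoid, so it remains to show that $I+J$ is already $p$-adically closed. This is the main obstacle. My first attempt is to use the fibre square $R/(I\cap J)\to R/I\times_{R/(I+J)}R/J$: by \Cref{intersection_product_pfdization}, $I\cap J$ is perfectoid, so $R/(I\cap J)$ is perfectoid, and \Cref{812prisms}-type excision suggests that $R/(I+J)$ is then the pushout, whose perfectoidization should be $R/\overline{I+J}$.

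A more elementary route is as follows. Since $R/I$ is $p$-complete and $J\to R/I$ has image $(I+J)/I$, it suffices to show that $(I+J)/I$ is closed in $R/I$. That image is the kernel of $R/I\to R/(I+J)$. Using \Cref{Roxane}(2), write $J=\theta(W(J^\flat))$; the image of $J$ in $R/I$ is then $\theta(W(\text{image of }J^\flat))$. The image of $J^\flat$ in $(R/I)^\flat=R^\flat/I^\flat$ is a radical ideal of a perfect ring. I would then argue that $\theta$ of the Witt vectors of any ideal $K$ of a perfect ring is closed, by a Cauchy-sequence argument: a convergent sum of terms $\sharp(k_n)p^n$ with $k_n\in K$ is again of this form. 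Only the finitely many partial sums lie in the ideal itself, so the limit needs to be handled carefully through $\theta$ and the completeness of $W$. If that step resists, I would fall back on the formulation "$p$-adic closure of the sum" (cf.\ \cite[Proposition 2.7]{fayolle2025Centers}), which (1) already gives.
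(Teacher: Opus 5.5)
Your parts (1) and (2) are the paper's argument: take the union of the generating sets supplied by \Cref{Roxane}(3), and write $I=\overline{\sum_{i\in I}(i)_{\pfd}}$. Both are fine.

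The gap is in (3). Beyond (1), the only content of (3) is that $I+J$ is already $p$-adically closed. Your fallback to ``the closure of the sum'' is just (1), and neither of your two routes proves closedness.

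Your first route is missing the mechanism that makes excision give closedness. The paper applies \Cref{812prisms} to $R/(I\cap J)\to R/J$, which is an isomorphism away from $I$ because $IJ\subseteq I\cap J$. Since $I\cap J$, $I$ and $J$ are perfectoid (\Cref{intersection_product_pfdization}), this gives a pullback square with vertices $R/(I\cap J)$, $R/J$, $R/I$, $(R/(I+J))_{\pfd}$ and surjective arrows. The classical square with $R/(I+J)$ in that corner is also a pullback of surjections. In a fiber product $B\times_D C$ of surjections, $D$ is the quotient of $B\times_D C$ by the sum of the two kernels. Hence the two corners coincide, i.e.\ $R/(I+J)=(R/(I+J))_{\pfd}$. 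Saying that the perfectoidization of the pushout ``should be $R/\overline{I+J}$'' only recovers (1).

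In your second route, the reduction to closedness of $\theta_{R/I}(W(K))$ with $K=(I^\flat+J^\flat)/I^\flat$ is correct. However, closedness of $\theta(W(K))$ for an ideal $K$ that need not be $\varpi^\flat$-adically closed is exactly the difficulty, and the Cauchy-sequence sketch does not address it.
\begin{itemize}
\item To lift a $p$-adically Cauchy sequence in $\theta(W(K))$ to one in $W(K)$, you need an Artin--Rees type bound $\theta(W(K))\cap p^mR\subseteq p^{m-c}\theta(W(K))$.
\item The naive version $c=0$ already fails for $R=\mathcal{O}_C$ with $C$ a complete algebraically closed extension of $\mathbb{Q}_p$ and $K=\mathfrak{m}^\flat$. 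There $\theta(W(K))=\mathfrak{m}$, and $\mathfrak{m}\cap pR=pR\neq p\mathfrak{m}$.
\item Any such bound amounts to bounding the $p^\infty$-torsion of $(R/I)/\theta(W(K))=W(A)/\xi$, where $A=R^\flat/(I^\flat+J^\flat)$. That is essentially the perfectoidness you are trying to prove.
\end{itemize}

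A genuine fix inside your framework is to show that $A$ is $\varpi^\flat$-adically complete.
\begin{itemize}
\item $A$ is the cokernel of the injection $R^\flat/(I^\flat\cap J^\flat)\to R^\flat/I^\flat\oplus R^\flat/J^\flat$. Both sides are $\varpi^\flat$-adically complete, since $I^\flat$ and $J^\flat$ are closed. So $A$ is derived $\varpi^\flat$-complete.
\item $A$ is perfect, because a sum of radical ideals in a perfect ring is radical. So its $\varpi^\flat$-power torsion is killed by $\varpi^\flat$, and $A$ is classically complete.
\item Then $W(A)/\xi$ is perfectoid, hence $p$-adically separated, so $I+J$ is closed.
\end{itemize}
This step is absent from your proposal.
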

\begin{proof}For the first statement, note that by \Cref{Roxane} (3), then there exists sets $S_{\alpha}$ that we can suppose closed up to taking $p$-power roots (in particular every element of $S_{\alpha}$ admits a system of compatible $p$-power roots) such that $\overline{(S_{\alpha})}=I_{\alpha}$, and this for every $\alpha\in A$. Now, it follows that the the $p$-adic closure of the sum $\Sigma_{\alpha\in A}I_{\alpha}$ is $\overline{(\bigcup_{\alpha\in A}S_{\alpha})}$ which concludes using \Cref{Roxane} (3), again. 

For the second point, first one sees that $I$ is a perfectoid ideal then the claimed property holds true by universal property. The converse is a direct application of the first point: In this case $I$ is the $p$-adic closure of $\sum_{i\in I}(i)_{\pfd}$.

For the third point, note that building on the above a non-trivial input is that we can ignore $p$-completion. Note that $I\cap J$ is a perfectoid ideal (see \Cref{intersection_product_pfdization}). It suffices to prove the statement for two perfectoid ideals $I$ and $J$ by induction. This is a direct consequence of arc descent using \Cref{812prisms} for the map $R/I\cap J\to R/J$ which is an isomorphism away from $I$, and that the square 
\[\begin{tikzcd}
R/I\cap J \arrow[d] \arrow[r] & R/J \arrow[d] \\
R/I \arrow[r]                 & R/I+J    
\end{tikzcd}\]
is a pullback square with surjective arrows, so that the bottom right of the pullback square (also with surjective arrows)
\[\begin{tikzcd}
R/I\cap J \arrow[d] \arrow[r] & R/J \arrow[d] \\
R/I \arrow[r]                 & (R/I+J)_{\pfd}    
\end{tikzcd}\]
has to coincide with the bottom right of this one.
\end{proof}

We now prove a result about how perfectoid ideals extend along extension by a perfectoid ring.
This is proved in the case of principal ideals in \cite[Lemma 2.4.3]{cai2023Perfectoid} and finitely generated ideals in \cite[Proposition 2.9]{fayolle2025Centers}.

\begin{lemma}[{cf.\ \cite{cai2023Perfectoid} and \cite{fayolle2025Centers}}]\label{pfdideals_along_fflatextension} 
Let $R$ be a perfectoid ring and $R\to R'$ a map to a perfectoid ring. Let $I\subset R$ be a perfectoid ideal in the sense that $R/I$ is a perfectoid ring. Then the natural map
\[I\widehat{\otimes}_R^{L}R'\to (IR')_{\pfd}\]
is an isomorphism. In particular $(IR')_{\pfd}$ is the $p$-adic closure of $IR'$ in $R'$.
\end{lemma}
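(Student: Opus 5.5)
The plan is to go through the tilt and $A_{\inf}$, using \Cref{Roxane}. Since $R/I$ is perfectoid, \Cref{Roxane} gives $I=\theta_R(W(I^{\flat}))$ and $R^{\flat}/I^{\flat}\cong (R/I)^{\flat}$. I would first identify $I\widehat{\otimes}^L_R R'$ with the fibre of $R'\to (R/I)\widehat{\otimes}^L_R R'$. So it suffices to prove two things: that $(R/I)\widehat{\otimes}^L_R R'$ is discrete and perfectoid, and that $R'\to (R/I)\widehat{\otimes}^L_RR'$ is surjective.

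Surjectivity is clear on $H^0$. Once the base change is known to be a discrete perfectoid ring receiving a surjection from $R'$ and through which $R'\to R'/IR'\to (R'/IR')_{\pfd}$ factors, it must equal $(R'/IR')_{\pfd}=R'/(IR')_{\pfd}$. This uses the universal property in \Cref{pfdization_def}, together with the fact that it is a quotient of $R'/IR'$ up to $p$-completion. The fibre is then $(IR')_{\pfd}$, and it is the $p$-adic closure of the image of $IR'$, giving the final claim.

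The key step is the discreteness and perfectoidness of $(R/I)\widehat{\otimes}^L_R R'$. This is the standard fact that derived $p$-complete tensor products of perfectoid rings over perfectoid rings are perfectoid and discrete, cf.\ \cite[Proposition 8.13 / Remark 1.13]{bhatt2022Prismsa}. Concretely, via $A_{\inf}$ I would write it as
\[ \big(A_{\inf}(R/I)\widehat{\otimes}^L_{A_{\inf}(R)}A_{\inf}(R')\big)/\xi, \]
and reduce modulo $p$ to $(R/I)^{\flat}\otimes^L_{R^{\flat}}R'^{\flat}$. This is a derived tensor product of perfect $\F_p$-algebras, hence discrete by Tor-vanishing for perfect rings (Bhatt--Scholze). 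It is also perfect and equal to $R'^{\flat}/I^{\flat}R'^{\flat}$, up to completion.

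Discreteness then lifts to the $(p,\xi)$-completed statement, and since $\xi$ stays distinguished the quotient is perfectoid. I expect the main obstacle to be the completion bookkeeping: checking that the derived $p$-completion of the discrete answer is still discrete, i.e.\ that no $\lim^1$ or higher derived completion terms appear. I would handle this using that $p$-complete perfectoid rings have bounded $p^\infty$-torsion, so that derived and classical completions agree. If this bookkeeping got messy, the fallback is to simply cite that $\mathrm{Perfd}$ is closed under derived $p$-complete pushouts.
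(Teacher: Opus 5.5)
Your proposal is correct and is essentially the paper's argument. Both use the fibre sequence obtained from $0\to I\to R\to R/I\to 0$, the discreteness and perfectoidness of $(R/I)\widehat{\otimes}^L_R R'$, and the identification of this base change with $R'/(IR')_{\pfd}$ via the universal property. The only difference is that the paper simply cites \cite[Lecture IV, Proposition 2.11]{Bhatt2018Eilenberg} and \cite[Proposition 8.13]{bhatt2022Prismsa} for the discreteness and perfectoidness, whereas you also sketch the standard $A_{\inf}$/Tor-independence argument behind that fact.
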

\begin{proof} As $R$, $R/I$ and $R'$ are all perfectoid $R/I\widehat{\otimes}_R^{L}R' $ is discrete by \cite[Lecture IV, Proposition 2.11]{Bhatt2018Eilenberg}.


    Note also that by compatibility of perfectoidization and and base change (\cite[Proposition 8.13]{bhatt2022Prismsa})
    \[R'/(IR')_{\pfd}=\left(R/I\widehat{\otimes}_R^{L} R'\right)_{\pfd}=R/I\widehat{\otimes}_R^{L}R' \]
    as every term in the tensor product is perfectoid. The first equality is obtained using the universal property of this quotient as the initial perfectoid $R'$-algebra where $IR'$ is zero. Now the exact sequence
    \[0\to I\to R\to R/I\to 0\]
    gives after derived $p$-complete base change to $R'$ a fiber sequence in $\widehat{\Ds}(R')$
    \[I\widehat{\otimes}_R^{L}R'\to R'\to R'/(IR')_{\pfd}.\]
    But has the two terms on the right are discrete and that the map is surjective, we conclude that the first term is also discrete being equal to the kernel of the map between these discrete modules, concluding.
\end{proof}

\begin{rem}\label{rem:flat_ideals}The argument in \cite[Lecture IV, Proposition 2.11]{Bhatt2018Eilenberg} ultimately relies on the fact that if $R$ is a perfect ring in characteristic $p$ and $f\in R$ is an element, then $(f^{1/p^{\infty}})$ is flat as an $R$-module. We can generalize this statement to perfectoid rings, see \Cref{prop:pfd_ideal_flatness}.
\end{rem}

\begin{rem}\label{rem:Bhatt_generalize_exmaple}
 The second statement in \Cref{prop:pfd_ideal_flatness} is a generalization of \cite[Example 3.2.13]{bhatt2025Aspects}. Namely the latter deals with the case where the elements generating the ideal admit a $p$-compatible sequence of $p$-power roots.
\end{rem}

\begin{proposition}\label{prop:pfd_ideal_flatness}Let $R$ be a perfectoid ring and $I\subseteq R$ an ideal.
\begin{enumerate}
    \item Suppose that $I=\sqrt{xR}$ for an element $x\in R$. Then $I_{\pfd}$ is a $p$-completely flat $R$-module.
    \item If $I=\sqrt{(x_1R+\dots+x_nR)}$ then the $p$-complete  tor dimension of $R/I_{\pfd}$ as a $R$-module is at most $n$.
\end{enumerate}
    
\end{proposition}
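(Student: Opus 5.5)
Part (1) reduces to the statement that \(I_{\pfd}=(x)_{\pfd}\) is \(p\)-completely flat. By \Cref{intersection_product_pfdization} (or simply because \(I^N\subset xR\subset I\) for an \(N\) depending on the element), \(\sqrt{xR}\) and \(xR\) have the same perfectoidization. So it suffices to treat \((x)_{\pfd}\).

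First I reduce to the case where \(x\) admits a compatible system of \(p\)-power roots. Let \(R'\) be the \(p\)-completion of \(R\otimes_{\Z[T]}\Z[T^{1/p^\infty}]\), where \(T\mapsto x\). This is \(p\)-completely faithfully flat over \(R\), and its perfectoidization \(R'_{\pfd}\) is perfectoid. In fact \(R\to R'\) is a \(p\)-completely faithfully flat map between perfectoid rings (André-type: adjoin \(p\)-power roots), and \(R'\) is already perfectoid up to \(p\)-completion.

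By \Cref{pfdideals_along_fflatextension}, \((x)_{\pfd}\widehat{\otimes}^L_R R'\simeq ((x)_{\pfd}R')_{\pfd}\). This equals \((xR')_{\pfd}\) because \((x)_{\pfd}R'\) lies between \(xR'\) and \((xR')_{\pfd}\). Since \(p\)-complete flatness descends along \(p\)-completely faithfully flat maps (the technical input of the later subsection on base change, which I would prove directly here by checking Tor against \(R/p\)-modules after base change), it is enough to show that \((xR')_{\pfd}\) is \(p\)-completely flat over \(R'\).

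In \(R'\), \(x\) has a compatible system of roots \(x^{1/p^n}=\sharp(x^\flat)^{1/p^n}\). By \Cref{Pfdizationideal_generated_by_p_power roots}, \((xR')_{\pfd}=\overline{(x^{1/p^\infty})}\). Now flatness is checked after \(-\otimes^L R'/p\). Using the Witt-vector description \Cref{Roxane} (2), \((x)_{\pfd}=\theta(W(J))\) with \(J=(x^{\flat 1/p^\infty})\subset R'^\flat\). So I compare with \(A_{\inf}\). The ideal \(J\subset R'^\flat\) is flat over the perfect ring \(R'^\flat\), since it is the colimit of the free modules \(x^{\flat1/p^n}R'^\flat\) modulo annihilators; this is the argument of \cite[Lecture IV, Proposition 2.11]{Bhatt2018Eilenberg}, and in fact \(J\simeq\varinjlim R'^\flat/\Ann\) is flat because \(\Ann(x^{\flat 1/p^n})\) is a perfect (radical, root-closed) ideal with flat quotient in perfect rings.

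Lifting to \(A_{\inf}\): \(W(\overline J)\) is \((p,\xi)\)-completely flat over \(A_{\inf}\) because it reduces mod \(p\) to \(\overline J\). Then reducing mod \(\xi\), which is a nonzerodivisor on \(W(\overline J)\) by the snake-lemma diagram in \Cref{Roxane}, gives that \((x)_{\pfd}=W(\overline J)/\xi\) is \(p\)-completely flat over \(R'\). The main obstacle is this last identification, namely \(W(\overline J)\otimes^L_{A_{\inf}}R'\simeq \theta(W(\overline J))\), which requires \(\xi\)-torsion-freeness of \(A_{\inf}/W(\overline J)=A_{\inf}(R'/(x)_{\pfd})\). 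That holds since \(R'/(x)_{\pfd}\) is perfectoid, so \(\xi\) is distinguished hence a nonzerodivisor there.

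For (2), by \Cref{intersection_product_pfdization} and \Cref{pfd_ideals_sum} (3), \(I_{\pfd}=(x_1)_{\pfd}+\dots+(x_n)_{\pfd}\). Then \(R/I_{\pfd}\simeq R/(x_1)_{\pfd}\widehat{\otimes}^L_R\cdots\widehat{\otimes}^L_R R/(x_n)_{\pfd}\), which is discrete by \Cref{pfdideals_along_fflatextension} and iterated base change. By (1), each factor has \(p\)-complete Tor amplitude in \([-1,0]\), since it is the cone of the flat \((x_i)_{\pfd}\to R\). Tensoring \(n\) of them gives amplitude in \([-n,0]\), i.e.\ \(p\)-complete tor dimension at most \(n\).
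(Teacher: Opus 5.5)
Your part (1) contains two false steps and one gap. All three can be repaired, and the repaired argument is a valid route different from the paper's.

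\emph{First}, $R'=(R\otimes_{\Z[T]}\Z[T^{1/p^\infty}])^{\wedge}$ is not perfectoid in general. For $x=0$ it is $R\langle T^{1/p^\infty}\rangle/(T)$, which contains the nonzero nilpotent $T^{1/p}$ (cf.\ \Cref{whathappensactually}), whereas perfectoid rings are reduced. So you cannot apply \Cref{pfdideals_along_fflatextension} to $R\to R'$, nor use $R'^{\flat}$, $\theta$, $\xi$ for $R'$. What you need is a $p$-completely faithfully flat map from $R$ to a perfectoid ring in which $x$ has a compatible system of $p$-power roots. Replacing $R'$ by $R'_{\pfd}$ restores perfectoidness, but faithful flatness of $R\to R'_{\pfd}$ is then exactly the nontrivial input. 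That input is André's lemma \cite[Theorem 7.14]{bhatt2022Prismsa}, which the paper invokes at precisely this point.

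\emph{Second}, your justification that $J=((x^{\flat})^{1/p^\infty})\subset R'^{\flat}$ is flat is wrong. The module $(x^{\flat})^{1/p^n}R'^{\flat}\cong R'^{\flat}/\operatorname{Ann}(x^{\flat})$ is not free when $x^{\flat}$ is a zero-divisor. Moreover, quotients of perfect rings by radical ideals need not be flat, e.g.\ $\mathbb{F}_p[t^{1/p^\infty}]\to\mathbb{F}_p$. The correct argument is as follows.
\begin{itemize}
\item Write $J\cong\varinjlim_n R'^{\flat}$, with transition maps multiplication by $(x^{\flat})^{1/p^n-1/p^{n+1}}$ and with $1\mapsto (x^{\flat})^{1/p^n}$ at stage $n$.
\item The kernel at stage $n$ is $\operatorname{Ann}(x^{\flat})$. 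It is killed by the next transition map, because $\operatorname{Ann}(x^{\flat})=\operatorname{Ann}((x^{\flat})^{c})$ for every $c\in\N[1/p]_{>0}$ in the perfect ring $R'^{\flat}$.
\item Hence $J$ is a filtered colimit of free modules, so it is flat.
\end{itemize}

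\emph{Third}, what enters $W(-)$ via \Cref{Roxane} (2) is $((x)_{\pfd})^{\flat}=\overline J$, the $\varpi^{\flat}$-adic closure of $J$, not $J$ itself. So you need $\overline J$ to be $\varpi^{\flat}$-completely flat. This holds because $R'^{\flat}/J$ is perfect, so its $\varpi^{\flat}$-power torsion is killed by $\varpi^{\flat}$; therefore $\overline J$ is the derived $\varpi^{\flat}$-completion of $J$.

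\emph{Comparison with the paper.} After André's lemma, the paper maps the universal ring $\Z_p[p^{1/p^\infty},X^{1/p^\infty}]^{p\wedge}$ to $R$ and reduces to it via \Cref{pfdideals_along_fflatextension}. There $X$ is a nonzerodivisor, so $(X^{1/p^\infty})$ is literally a union of free rank-one modules. The quotient by it is $p$-torsion-free, so the $p$-adic closure is the derived $p$-completion. No annihilators or tilting are needed.

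Your route stays in $R'$ and tilts. The key facts are:
\begin{itemize}
\item $W(\overline J)$ is $p$-torsion-free with $W(\overline J)/p=\overline J$;
\item $\xi$ is a nonzerodivisor on $A_{\inf}(R')/W(\overline J)=A_{\inf}(R'/(x)_{\pfd})$;
\item $R'/p=R'^{\flat}/\varpi^{\flat}$.
\end{itemize}
Together these give $(x)_{\pfd}\otimes^L_{R'}R'/p\simeq\overline J\otimes^L_{R'^{\flat}}R'^{\flat}/\varpi^{\flat}$. Thus $p$-complete flatness of the perfectoid ideal is read off from its tilt and reduces to Bhatt's characteristic $p$ fact. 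This is more intrinsic, but it forces you to handle zero-divisors and completions, which the universal case avoids.

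Your part (2) is correct and is a more explicit version of the paper's induction. It uses $I_{\pfd}=\sum_i (x_i)_{\pfd}$, the identification $R/I_{\pfd}\simeq R/(x_1)_{\pfd}\widehat{\otimes}^L_R\cdots\widehat{\otimes}^L_R R/(x_n)_{\pfd}$ from \Cref{pfdideals_along_fflatextension}, and additivity of Tor amplitude.
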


\begin{proof}We first prove (1). We can assume that $I=xR$ as two ideals with the same radical have the same perfectoidization. Using that $p$-complete flatness can be checked after derived $p$-completely faithfully flat base change, \Cref{pfdideals_along_fflatextension} and André's Lemma \cite[Theorem 7.14]{bhatt2022Prismsa} we can assume that $R$ is absolutely integrally closed. Now, choose a compatible of sequence of $p$-power roots of $p$ and $x$ in $R$. This leads to a map
\[\widetilde{R}:=\Z_p[p^{1/p^{{\infty}}},X^{1/p^{\infty}}]^{p\wedge}\to R\]
sending $X$ to $x$. Now, it suffices to prove that $(X\widetilde{R})_{\pfd}=\overline{(X^{1/p^{\infty}})}$ is a $p$-completely flat $\widetilde{R}$-module using that the derived $p$-complete base change to $R$ would be again $p$-completely flat and \Cref{pfdideals_along_fflatextension} again.

The uncompleted quotient $\widetilde{R}/(X^{1/p^{\infty}})$ is $p$-torsion free: namely if 
$$p\left(\sum_{k\in \N[1/p]}a_kX^k\right)=\sum_{k\in \N[1/p]_{\geq 1/p^m}}b_kX^k$$
for some $m\in \N$, then using that $\widetilde{R}$ embeds into the $p$-torsion free module $\prod_{\N[1/p]}\Z_p[p^{1/p^{\infty}}]^{p\wedge}$ by the coefficients of the power series we get our claim. Therefore taking the derived $p$-completion $\Lambda_p(-)$ we get a fiber sequence in $\widehat{\Ds}(\widetilde{R})$
\[\Lambda_p(X^{1/p^{\infty}})\to \widetilde{R}\to \Lambda_p\left(\widetilde{R}/(X^{1/p^{\infty}})\right)\]
Note that both the right and the left term are actually discrete classical completions because both modules that we are taking the derived $p$-completion of are $p$-torsion free. Namely, the left term is $\Lambda_p\left(\widetilde{R}/(X^{1/p^{\infty}})\right)=\widetilde{R}/\overline{(X^{1/p^{\infty}})}$ and therefore the closure $\overline{(X^{1/p^{\infty}})}$ in $\widetilde{R}$ is equal to the (derived) $p$-completion of the module $(X^{1/p^{\infty}})$.
Now $(X^{1/p^{\infty}})$ is a flat $\widetilde{R}$-module as the union of rank 1 free modules $(X^{1/p^n})$ for $n\in \N$. Therefore $(X\widetilde{R})_{\pfd}=\overline{(X^{1/p^{\infty}})}$ is $p$-completely flat as the $p$-completion of a $p$-torsion free flat module (see \cite[Lemma 4.7.(2)]{bhatt2019Topologicala} for example).

As for the second assertion, we proceed using an induction on the number of generators of $I$ up to taking the radical, the case of one generator being handle by the first statement of the lemma.



    
\end{proof}

\begin{rem}\label{rem:striking} Note that putting together \Cref{prop:pfd_ideal_flatness} and \Cref{pfd_approximation} is quite striking: namely, taking the notation of the latter $(d)_{\pfd}=\overline{(\sharp(d_n)^{1/p^\infty})_{n \geq 0}}$ is $p$-completely flat, which in principle puts strong constrains on the relations that the generators can have.
\end{rem}


\section{Study of torsion in perfectoid rings}\label{torsion_section}

The main results of this section show that torsion in perfectoid rings is controlled, see \Cref{torsion_is_almostzero} and \Cref{bounded_d_infinity_torsion}.

\subsection{Technical results on $p$-completely faithfully flat base change}\label{tech_pcom_subsec}

As a prerequisite for \Cref{tame_torsion_section} and for future reference, we prove some general lemmas on $p$-completely faithfully flat base change.

We first mention the following lemma which mention properties of $p$-completely flat base change.

\begin{lemma} \label{CompBaseChange}
    Let \(R \to R'\) be a \(p\)-completely flat morphism.
      \begin{enumerate}
          \item Let $M$ be an $R$ module of bounded $p^{\infty}$-torsion. Then the derived $p$-completed base change of $M$ to $R'$ is discrete and correspond to the classical completion of the underived tensor product $M\otimes_R R'$ \emph{(cf.  \cite[Lemma 2.3.5]{cai2023Perfectoid}}.
          \item Let \(M\) be any derived $p$-complete $R$-module. The derived $p$-completed base change of $M$ to $R'$ is discrete if and only if for some presentation by a quotient $\coker(P\subset N)=M$ where $N$ is a module of bounded $p^{\infty}$-torsion, the map $P\subset N$ stays injective after derived $p$-completed base change to $R'$.
      \end{enumerate} 
\end{lemma}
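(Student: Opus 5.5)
The plan is to work throughout in the derived $p$-complete category $\widehat{\Ds}(R)$ and to use the standard description of $p$-complete flatness. $R\to R'$ is $p$-completely flat precisely when $R'\otimes^L_R(-)$ preserves discreteness of $R/p$-modules, with the same conditions modulo $p^n$ for all $n$. In addition, the derived $p$-completion of a module is controlled by the derived limit $R\lim_n (-)\otimes^L_{\Z}\Z/p^n$.

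For (1), let $M$ have bounded $p^\infty$-torsion, say $M[p^\infty]=M[p^c]$. The first step is to compute the derived $p$-completed base change as
\[
R'\widehat{\otimes}^L_R M=R\lim_n \bigl(M\otimes^L_{\Z}\Z/p^n\bigr)\otimes^L_R R'=R\lim_n\bigl(M\otimes^L_{\Z}\Z/p^n\otimes^L_{R/p^n}R'/p^n\bigr),
\]
using that $R'\otimes^L_R R/p^n$ is discrete and equal to $R'/p^n$ by $p$-complete flatness. The complex $M\otimes^L_{\Z}\Z/p^n$ has $H^0=M/p^n$ and $H^{-1}=M[p^n]$. Each is an $R/p^n$-module, so tensoring with the flat-mod-$p^n$ algebra $R'/p^n$ stays discrete in each degree. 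This gives $H^0=M/p^n\otimes R'$ and $H^{-1}=M[p^n]\otimes_R R'$.

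The second step is to pass to the limit. For $n\ge c$, the transition maps on $H^{-1}$ are multiplication by $p$ from $M[p^{n+1}]=M[p^c]$ to $M[p^n]=M[p^c]$. Their $m$-fold composites vanish once $m\ge c$, so the pro-system $\{H^{-1}\}$ is pro-zero. Hence $\lim$ and $\lim^1$ of it vanish. The system on $H^0$ has surjective transitions, so $\lim^1=0$ there. Therefore $R\lim$ is discrete and equal to $\lim_n (M\otimes_R R')/p^n$, which is the classical completion of $M\otimes_R R'$. This is the argument of \cite[Lemma 2.3.5]{cai2023Perfectoid}, and I expect no obstacle here.

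For (2), take a presentation $0\to P\to N\to M\to 0$ with $N$ of bounded $p^\infty$-torsion. By (1), $R'\widehat{\otimes}^L_R N$ is discrete. The functor $R'\widehat{\otimes}^L_R(-)$ is exact on fiber sequences in $\widehat{\Ds}(R)$, after first replacing $P,N$ by their derived $p$-completions if needed; completion is exact, and I would remark that this replacement is harmless. So there is a fiber sequence
\[
R'\widehat{\otimes}^L_R P\to R'\widehat{\otimes}^L_R N\to R'\widehat{\otimes}^L_R M .
\]
Here $P\subset N$ also has bounded $p^\infty$-torsion, so by (1) the first two terms are discrete, equal to $\widehat{P\otimes R'}$ and $\widehat{N\otimes R'}$. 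The long exact sequence then gives $H^{-1}(R'\widehat{\otimes}^L_R M)=\ker(\widehat{P\otimes R'}\to\widehat{N\otimes R'})$, and all other negative cohomology vanishes. Hence $R'\widehat{\otimes}^L_R M$ is discrete if and only if the base-changed map $P\to N$ remains injective. This proves the "for some presentation" statement; the same argument works for every such presentation.

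The one point needing care is making sense of "derived $p$-completed base change" of the possibly non-complete modules $P,N$, which I handle via the completion remark above. Existence of a presentation is automatic: take $N$ free, which is $p$-torsion-free.
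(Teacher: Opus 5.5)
Your proposal is correct. For (2) it is the paper's argument: base change $0\to P\to N\to M\to 0$, use (1) to see that the first two terms are discrete, and read off that the cofiber is discrete if and only if the base-changed map $P\to N$ stays injective. Your remarks that this works for every such presentation, and that one exists (e.g.\ with $N$ free), are welcome additions.

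For (1) the routes differ.
\begin{itemize}
\item The paper first treats $p$-torsion-free $M$, where the terms are discrete by flatness modulo $p^n$ and the limit is Mittag-Leffler. It then handles bounded torsion by the d\'evissage $0\to M[p^\infty]\to M\to M/M[p^\infty]\to 0$: $M[p^\infty]$ is an $R/p^NR$-module, so its base change is discrete, and discreteness of the middle term follows from the fiber sequence.
\item You instead compute $(M\otimes^L_{\Z}\Z/p^n)\otimes^L_R R'$ directly, with cohomology $M[p^n]\otimes_R R'$ in degree $-1$ and $(M\otimes_R R')/p^n$ in degree $0$. You dispose of the degree $-1$ part by noting that the transition maps are multiplication by $p$ on $M[p^c]$, so this pro-system is pro-zero.
\end{itemize}

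The paper's d\'evissage reduces everything to two easy cases. As written, however, it only makes the identification with the classical completion explicit in the torsion-free case; for bounded-torsion $M$ it records only discreteness. Your computation yields discreteness and the identification with $\varprojlim_n (M\otimes_R R')/p^n$ simultaneously. Because it is phrased with $M\otimes^L_{\Z}\Z/p^n$ throughout, it also relies on no bounded-torsion hypothesis on $R$ or $R'$.

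One cosmetic point: the intermediate expression with $\otimes^L_{R/p^n}R'/p^n$ is not literally meaningful. When $R$ has $p$-torsion, $M\otimes^L_{\Z}\Z/p^n$ is a complex over $R\otimes^L_{\Z}\Z/p^n$, not over $R/p^n$. It is cleaner to write the fiber sequence $M[p^n][1]\to M\otimes^L_{\Z}\Z/p^n\to M/p^nM$, whose outer terms are modules killed by $p^n$. Then note that $R'\otimes^L_R(-)$ is discrete on such modules by d\'evissage from $R/p$-modules, which is exactly what your cohomology computation uses.
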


\begin{proof}
   
   First, we will prove that the derived completed tensor product \(M \widehat{\otimes}^L_R R'\) is the same as the \(p\)-adic completion of the tensor product \(M \otimes_R R'\) for any $p$-torsion-free \(R\)-module \(M\). We denote the derived completion functor by $\Lambda_p$. We have,
    \begin{align*}
        &\Lambda_p(M\otimes_R^L R')=R\varprojlim_{n\geq 1} (M\otimes_R^L R'\otimes_{R'} R'/p^nR')=R\varprojlim_{n\geq 1} (M\otimes_R^L R/p^nR\otimes_{R/p^nR} R'/p^nR') \\
        &=R\varprojlim_{n\geq 1}(M/p^nM\otimes_{R/p^nR}R'/p^nR')=\varprojlim_{n\geq 1}(M/p^nM\otimes_{R/p^nR}R'/p^nR')
    \end{align*}
    where the first equality is a definition, the second one uses that $A\otimes_A^L -$ is the identity for any ring $A$, the third one holds because $M$ is $p$-torsion-free and $R/p^nR\to R'/p^nR'$ is flat. The fourth one is because the system is Mittag-Leffler. Now the last one is indeed the \(p\)-adic completion of the tensor product \(M \otimes_R R'\).
    
    Now suppose that $M$ is bounded $p^{\infty}$-torsion. Take the exact sequence
    \begin{equation*}
        0 \to M[p^\infty] \to M \to M/M[p^\infty] \to 0
    \end{equation*}
    of \(R\)-modules.
    Taking the $p$-completed derived base change, we have a fiber sequence
    \begin{equation*}
        (M[p^\infty]) \widehat{\otimes}^L_R R' \to M \widehat{\otimes}^L_R R' \to M/M[p^\infty] \widehat{\otimes}^L_R R'
    \end{equation*}
    in \(\widehat{\mcalD}(R')\). But note that because $M[p^{\infty}]$ is bounded torsion this is a $R/p^NR$ module for some $N\in \N$. So $ (M[p^\infty]) \widehat{\otimes}^L_R R'=(M[p^\infty]) \widehat{\otimes}^L_{R/p^NR} R'/p^NR'$ which is concentrated in degree zero by flatness. This and that $M/M[p^\infty] \widehat{\otimes}^L_R R'$ is concentrated in degree zero by the above, shows that the middle term is also in degree zero.
   

    We will prove the second statement.
    Take a short exact sequence \(0 \to P \to N \to M \to 0\) where $N$ and therefore $P$ as a submodule, are bounded $p^{\infty}$-torsion \(R\)-modules.
    Then we have a fiber sequence
    \begin{equation*}
        P \widehat{\otimes}^L_R R' \to N \widehat{\otimes}^L_R R' \to M \widehat{\otimes}^L_R R'
    \end{equation*}
    in \(\widehat{\Ds}(R')\).
   As the first two terms are in degree zero by (1), the last term is concentrated in degree \(0\) if and only if the first map is injective.
\end{proof}

For example, one can deduce the following using \Cref{CompBaseChange}.

\begin{lemma}\label{torsion_and_flatness} Let $R\to R'$ be a $p$-completely flat map between derived $p$-complete rings. Suppose that $R$ is of bounded $p^{\infty}$-torsion.
  Suppose that $d\in R$ is a non-zero divisor. If $R/dR$ is of bounded $p^{\infty}$-torsion, then $d\in R'$ is also a non-zero divisor. In particular if $R$ is $p$-torsion-free, so is $R'$.

\end{lemma}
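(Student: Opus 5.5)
The natural route is to read the statement off \Cref{CompBaseChange}: non-zero-divisor properties become injectivity statements about maps of discrete modules after derived $p$-complete base change. Start from the short exact sequence
\[0\to R\xrightarrow{d} R\to R/dR\to 0,\]
which is exact because $d$ is a non-zero divisor in $R$. By hypothesis, all three terms have bounded $p^{\infty}$-torsion: $R$ and $R/dR$ by assumption, and the first $R$ being a copy of $R$.

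Next apply the derived $p$-completed base change $R'\widehat{\otimes}^L_R(-)$. This gives a fiber sequence in $\widehat{\Ds}(R')$
\[R\widehat{\otimes}^L_R R'\xrightarrow{d} R\widehat{\otimes}^L_R R'\to R/dR\widehat{\otimes}^L_R R'.\]
By \Cref{CompBaseChange}(1), every term is discrete, because each module has bounded $p^{\infty}$-torsion. The first two terms are $R'$ itself: $R'$ is derived $p$-complete, and the completed base change of $R$ is $R'$. The third term is the classical $p$-completion of $R'/dR'$.

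The key step is then the long exact sequence of homotopy groups. Since the cofiber is concentrated in degree $0$, we get $\pi_1(\text{cofiber})=0$, and so multiplication by $d$ on $R'$ is injective. This shows $d$ is a non-zero divisor in $R'$. One must check that the map in the fiber sequence really is multiplication by $d$ on $R'$; this holds by functoriality, since it is the base change of the $R$-linear map $d\colon R\to R$.

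For the last assertion, take $d=p$. If $R$ is $p$-torsion-free then $p$ is a non-zero divisor, $R/pR$ is killed by $p$ and so has bounded $p^{\infty}$-torsion, and $R$ trivially has bounded torsion. The first part then gives that $p$ is a non-zero divisor in $R'$.

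I expect no real obstacle. The only subtle point is making sure that discreteness of the cofiber is exactly what is needed for injectivity. This is precisely the criterion of \Cref{CompBaseChange}(2) applied to the presentation $R/dR=\coker(dR\subset R)$, so one could equally invoke (2) directly.
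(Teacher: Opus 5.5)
Your proof is correct and matches the paper's argument. The paper cites \Cref{CompBaseChange}(2) for the presentation $R/dR=\coker(R\xrightarrow{d}R)$, with the base-changed cokernel discrete by \Cref{CompBaseChange}(1). Your long exact sequence argument is exactly what that citation unpacks to.
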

\begin{proof}The map $R\xrightarrow{\cdot d} R$ stays injective after derived $p$-completed base change by $R'$ using \Cref{CompBaseChange} (2) because the cokernel is discrete by \Cref{CompBaseChange} (1).
\end{proof}

Note the following.

\begin{lemma}\label{fflat_classpcomplete}Let $R\to R'$ be a $p$-completely faithfully flat map between $p$-adically complete rings. Then $R\to R'$ is injective. In particular any $p$-completely faithfully flat maps between perfectoid rings is injective.
    
\end{lemma}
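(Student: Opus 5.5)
The plan is to reduce injectivity of $R\to R'$ to the injectivity of the reductions modulo $p^n$, which are honest faithfully flat maps, and then pass to the limit using $p$-adic completeness of $R$.

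\emph{Modulo $p^n$.} By definition of $p$-complete faithful flatness, $R/p^nR\to R'\otimes^L_R R/p^nR$ is a faithfully flat map of discrete rings; in particular $R'\otimes^L_R R/p^nR$ is discrete, equal to $R'/p^nR'$. Hence for every $n\geq 1$ the map
\[R/p^nR\to R'/p^nR'\]
is faithfully flat. A faithfully flat ring map is injective, since $M\to M\otimes_A B$ is injective for faithfully flat $A\to B$. Applying this with $M=A=R/p^nR$ shows that $R/p^nR\to R'/p^nR'$ is injective for every $n$.

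\emph{Passing to the limit.} Let $x\in R$ map to $0$ in $R'$. Then for each $n$ the class of $x$ in $R'/p^nR'$ vanishes, so by injectivity $x\in p^nR$ for all $n$. Thus $x\in\bigcap_n p^nR$. Since $R$ is $p$-adically complete, hence $p$-adically separated, $\bigcap_n p^nR=0$, and therefore $x=0$. Only separatedness of $R$ is used here; no hypothesis on $R'$ beyond the flatness statement is needed.

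\emph{The perfectoid case.} Perfectoid rings are classically $p$-adically complete (\Cref{rad_and_p_comp}), so the first part applies directly and gives the final claim.

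There is no real obstacle. The only care needed is to note that $p$-complete flatness forces the derived reduction $R'\otimes^L_R R/p^nR$ to be discrete and equal to $R'/p^nR'$, so that one is genuinely working with flat maps of ordinary rings modulo $p^n$.
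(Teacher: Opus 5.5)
Your proof is correct and is essentially the paper's argument: both reduce modulo $p^n$, where $R/p^nR\to R'/p^nR'$ is faithfully flat and hence injective, and then pass to the limit. The only difference is phrasing: the paper writes $R\to R'$ as $\varprojlim_n R/p^nR\to\varprojlim_n R'/p^nR'$, a limit of injections, while your elementwise version makes explicit that only $p$-adic separatedness of $R$ is needed.
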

\begin{proof}By definition that $R'$ is $p$-completely faithfully flat implies that $R'/^Lp^nR'$ is faithfully flat over $R/p^nR$. Therefore it is discrete and equal to $R'/p^nR'$.

We can write $R\to R'$ as a limit of maps
\[\varprojlim_{n\geq 1} R/p^n R\to \varprojlim_{n\geq 1}R'/p^nR'\]
all of which are faithfully flat, therefore injective. Now any such limit of injective maps is injective.
\end{proof}

Aiming to \Cref{PerfectoidCheckFFdescent} as a consequence we prove the lemma below.

\begin{lemma} \label{pFFlatDescentLemma}
    Let \(R\) be a discrete ring and let \(S \to S'\) be a map of \(E_{\infty}\)-\(R\)-algebras.
    Take a \(p\)-completely faithfully flat map \(R \to R'\) of derived \(p\)-complete rings.
    Then the map \(S \to S'\) is an isomorphism if and only if the \(p\)-completed derived base change \(S \widehat{\otimes}^L_R R' \to S' \widehat{\otimes}^L_R R'\) is an isomorphism.
\end{lemma}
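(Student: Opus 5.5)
The plan is to pass to the cone and reduce to a statement about modules. Let $C$ be the cofiber of $S\to S'$ in $\Ds(R)$; it is an $R$-module spectrum. Since $R'\widehat{\otimes}^L_R(-)$ is exact, the cofiber of $S\widehat{\otimes}^L_R R'\to S'\widehat{\otimes}^L_R R'$ is $C\widehat{\otimes}^L_R R'$. One direction is trivial, so it suffices to show: if $C\widehat{\otimes}^L_R R'\simeq 0$, then $C\simeq 0$.

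The statement only makes sense in the derived $p$-complete world. Here I take $S,S'$ to be derived $p$-complete, as all rings in our applications are, or else I interpret "isomorphism" after derived $p$-completion. So I replace $C$ by $\Lambda_p(C)$ and assume $C$ is derived $p$-complete. By derived Nakayama, $C\simeq 0$ as soon as $C\otimes^L_{\Z}\Z/p\simeq C\otimes^L_R R/^Lp\simeq 0$. On the other side we have
\[(C\widehat{\otimes}^L_R R')\otimes^L_{\Z}\Z/p\simeq C\otimes^L_R R'\otimes^L_{\Z}\Z/p\simeq (C/^Lp)\otimes^L_{R/^Lp}R'/^Lp,\]
because reducing mod $p$ kills the effect of completion. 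So this vanishes by hypothesis.

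Next I use $p$-complete faithful flatness. By definition, $R'/^Lp$ is discrete and faithfully flat over $R/^Lp$. If $R$ has $p$-torsion, then $R/^Lp$ is not discrete. I handle this with the standard fact (\cite[Definition 4.1 ff.]{bhatt2019Topologicala}) that $R'\otimes^L_R(-)$ sends $p$-torsion modules $N$, i.e.\ $R/p$-modules regarded over $R$, to $N\otimes_{R/p}R'/p$ concentrated in degree $0$, and that this functor is conservative on such $N$. Put $D=C/^Lp$. Its homotopy groups $\pi_i(D)$ are discrete $R$-modules killed by $p^2$, hence $p$-power torsion. Flatness gives
\[\pi_i(D\otimes^L_R R')\cong \pi_i(D)\otimes_R R',\]
which I get by the spectral sequence or Postnikov tower argument, where the flatness makes everything degenerate. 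Faithful flatness then gives $\pi_i(D)\otimes_R R'=0\Rightarrow \pi_i(D)=0$, since $\pi_i(D)$ is an $R/p^2$-module and $R/p^2\to R'/p^2$ is faithfully flat. Hence $D\simeq 0$, and so $C\simeq 0$.

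The main obstacle is the interchange just used. I have to check that $\pi_i(D\otimes^L_R R')$ really is $\pi_i(D)\otimes_R R'$ even when $D$ is unbounded, and that faithful flatness mod $p$ upgrades to faithful flatness mod $p^2$. For the first point, $D$ is an $R/^Lp^2$-module in the homotopy category, and the Tor-amplitude of $R'$ over $R$ against $p$-torsion modules is in degree $0$, so I would argue via Postnikov truncations and colimits, which tensor products commute with. For the second point I would use $0\to R/p\to R/p^2\to R/p\to 0$ at the level of $p$-torsion modules, together with the derived statement that $R'/^Lp^n$ is faithfully flat over $R/^Lp^n$. If these details prove subtle, I would cite \cite[Lemma 4.7]{bhatt2019Topologicala}.
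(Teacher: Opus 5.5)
Your proof is correct and follows the same route as the paper. Both take the cofiber $C$, show that its reduction mod $p$ vanishes using faithful flatness, and conclude by derived Nakayama; $C$ is derived $p$-complete because $S,S'$ are, a hypothesis the paper's proof also uses tacitly, as you rightly observe.

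The only difference is that the paper reduces with $-\otimes^L_R R/pR$ (the classical quotient) instead of forming $C/^{L}p$. It obtains $(C\otimes^L_R R/pR)\otimes^L_{R/pR}R'/pR'=0$, where $R'/pR'=R'\otimes^L_R R/pR$ is discrete and faithfully flat over $R/pR$ by the very definition of $p$-complete faithful flatness. This makes your Postnikov/$t$-exactness step and the detour through $R/p^2$ unnecessary. It also sidesteps your opening claim that $R'/^{L}p$ is discrete, which is false when $R$ has $p$-torsion.
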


\begin{proof}
    First note that \(R' \otimes^L_R R/pR\) is discrete and \(R'/pR'\) is faithfully flat over \(R/pR\). In particular, \(R'\) is also a discrete \(R\)-algebra by derived Nakayama lemma.
    The only if direction is obvious. Assume the converse. Set \(C\) be the cofiber of the map \(S \to S'\) in \(\mcalD(R)\).
    Since derived \(p\)-completeness is stable under limits, this cofiber \(C\) is also derived \(p\)-complete.
    Moreover the assumption implies \(C \widehat{\otimes}^L_R R' = 0\).
    Taking \(- \otimes^L_R R/pR\), we have isomorphisms in \(\mcalD(R/pR)\):
    \begin{equation*}
        (C \widehat{\otimes}^L_R R') \otimes^L_R R/pR \cong C \otimes^L_R R'/pR' \cong (C \otimes^L_R R/pR) \otimes^L_{R/pR} R'/pR' = 0.
    \end{equation*}
    By the faithfully flatness, the derived tensor product \(C \otimes^L_R R/pR\) vanishes.
    Again using derived Nakayama lemma, \(C\) is also zero and thus \(S \to S'\) is an isomorphism.
\end{proof}

\begin{lemma}
\label{PerfectoidCheckFFdescent} Let $R$ be a perfectoid ring. 
Let $S$ be a derived $p$-complete $R$-algebra. Let $R\to R'$ a $p$-completely faithfully flat map where $R'$ is a perfectoid ring. Then $S$ is perfectoid if and only if $S'\defeq S\widehat{\otimes}_{R}R'$ is perfectoid.
\end{lemma}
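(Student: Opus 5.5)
The plan is to reduce both directions to the compatibility of perfectoidization with base change along maps of perfectoid rings \cite[Proposition 8.13]{bhatt2022Prismsa}, together with the descent criterion \Cref{pFFlatDescentLemma}.

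\emph{The direction ``$S$ perfectoid $\Rightarrow$ $S'$ perfectoid''.} This should be formal. If $S$ is perfectoid, then $S'=S\widehat{\otimes}^L_R R'$ is a derived $p$-complete pushout of perfectoid rings $S\leftarrow R\to R'$. Hence it is discrete and perfectoid by \cite[Lecture IV, Proposition 2.11]{Bhatt2018Eilenberg}, which is the input already used in the proof of \Cref{pfdideals_along_fflatextension}. Faithful flatness is not needed for this direction.

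\emph{The converse.} Suppose $S'$ is perfectoid, and consider the canonical map $S\to S_{\pfd}$ of $E_\infty$-$R$-algebras. Here $S_{\pfd}$ is a priori only a coconnective $E_\infty$-ring.
\begin{enumerate}
    \item Apply $(-)\widehat{\otimes}^L_R R'$ to this map. Since $R\to R'$ is a map of perfectoid rings, \cite[Proposition 8.13]{bhatt2022Prismsa} gives
    \[S_{\pfd}\widehat{\otimes}^L_R R'\simeq (S\widehat{\otimes}^L_R R')_{\pfd}=S'_{\pfd}.\]
    \item Because $S'$ is perfectoid, $S'_{\pfd}=S'$ (see \Cref{pfdization_def}). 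So the base changed map $S'\to S'_{\pfd}$ is an isomorphism.
    \item \Cref{pFFlatDescentLemma}, applied to the $p$-completely faithfully flat map $R\to R'$, then shows that $S\to S_{\pfd}$ is itself an isomorphism.
    \item Consequently $S_{\pfd}$ is discrete, being isomorphic to the discrete ring $S$.
    \item By \cite[Corollary 7.3 and Section 8]{bhatt2022Prismsa}, a perfectoidization concentrated in degree $0$ is a perfectoid ring: its $H^0$ is always perfectoid. Hence $S\simeq S_{\pfd}$ is perfectoid.
\end{enumerate}

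\emph{Expected obstacle.} The main point to check is the exact form of the base change compatibility in step 1. It must hold for an arbitrary derived $p$-complete $R$-algebra $S$, with $S_{\pfd}$ possibly non-discrete, and not only for semiperfectoid or integral $S$. I expect \cite[Proposition 8.13]{bhatt2022Prismsa} to cover exactly this, since it is stated for perfectoidization of derived $p$-complete algebras over a perfectoid base.

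\emph{Fallback.} If that citation turns out to be insufficient, I would argue directly with Frobenius instead:
\begin{itemize}
    \item show that $S/pS$ is semiperfect by descending surjectivity of Frobenius along the faithfully flat map $R/p\to R'/p$;
    \item produce the element $\varpi$ of \Cref{intrisic_semipfd}(2) from $R$;
    \item descend the kernel conditions in the same way.
\end{itemize}
This route is messier, so I would try the perfectoidization argument first.
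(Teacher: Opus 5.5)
Your proposal is correct and is essentially the paper's proof. For the forward direction you use stability of perfectoid rings under derived $p$-complete base change; the paper cites \cite[Proposition 2.1.11 (b)]{cesnavicius2023purityflatcohomology} or \cite[Proposition 8.13]{bhatt2022Prismsa} where you cite Bhatt's notes. For the converse you base change $S\to S_{\pfd}$ along $R\to R'$ via \cite[Proposition 8.13]{bhatt2022Prismsa}, identify the result with the isomorphism $S'\to S'_{\pfd}$, and descend with \Cref{pFFlatDescentLemma}, exactly as the paper does.

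Your explicit final step, that a discrete perfectoidization is perfectoid (cf.\ \cite[Corollary 8.14]{bhatt2022Prismsa}, as used in \Cref{pfdization_def}), is left implicit in the paper. Your fallback argument is not needed.
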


\begin{proof}
    If $S$ is perfectoid, then $S'$ is also by \cite[Proposition 2.1.11 (b)]{cesnavicius2023purityflatcohomology} or \cite[Proposition 8.13]{bhatt2022Prismsa}.

Conversely if $S'$ is perfectoid, we have the following pushout squares of $E_{\infty}$-rings
\[\begin{tikzcd}
R_{\pfd}=R \arrow[r] \arrow[d] & S \arrow[d] \arrow[r] & S_{\pfd} \arrow[d] \\
R'_{\pfd}=R' \arrow[r]         & S' \arrow[r, "\cong"] & S'_{\pfd}         
\end{tikzcd}\]
by \cite[Proposition 8.13]{bhatt2022Prismsa}. Therefore the map $S\to S_{\pfd}$ is an isomorphism after the $p$-completely faithfully-flat base change by $- \widehat{\otimes}^L_R R'$, which implies the claim (\Cref{pFFlatDescentLemma}).
\end{proof}

\begin{rem}
\Cref{PerfectoidCheckFFdescent} sheds another point of view of the fact that if $R$ is a perfectoid ring and $R\to S$ is a finite étale map, then $S$ is perfectoid. 
Say $R\to R'$ is $p$-completely faithfully flat where $R'$ is absolutely integrally closed \cite[Theorem 7.14]{bhatt2022Prismsa}. Up to a further Zariski-refinement of $R'$, we can suppose that the finite \'etale extension $S'\defeq S\widehat{\otimes}_R R'$ of \(R'\) is isomorphic to a finite product $R'\times \cdots \times R'$.
Now, this product is perfectoid, see for example \cite[Proposition 2.1.11 (d)]{cesnavicius2023purityflatcohomology}. Then use that the perfectoid property can be checked up to $p$-completely faithfully flat base change as explained in \Cref{PerfectoidCheckFFdescent} to conclude.
\end{rem}

As an application of \Cref{pfdideals_along_fflatextension}, we show that $p$-completely (faithfully) flat maps are compatible with the decomposition of a perfectoid ring into a $p$-torsion free perfectoid ring and a $p$-torsion perfect ring (as in \cite[Section 2.1.3]{cesnavicius2023purityflatcohomology}).

\begin{proposition}
    \label{Structure_flatmaps_pfd} Let $R\to R'$ be a $p$-completely (faithfully) flat map between perfectoid rings. Then, if we write 
\[R\to T\times_{\overline{T}}\overline{R} \quad R'\to T'\times_{\overline{T'}}\overline{R'},\] 
for the decompositions of $R$ and $R'$ into $p$-torsion free perfectoid rings \(T\) and \(T'\) and their reduced modulo $p$-fibers (see \cite[Section 2.1.3]{cesnavicius2023purityflatcohomology}), then the induced maps $T\to T'$ and $\overline{R}\to \overline{R'}$ are also $p$-completely (faithfully) flat.
\end{proposition}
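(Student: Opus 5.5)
Recall from \cite[Section 2.1.3]{cesnavicius2023purityflatcohomology} that $T=R/R[p^{\infty}]$ and $\overline{R}=R/\sqrt{pR}=R/(p)_{\pfd}$, and similarly for $R'$. The proof therefore splits into two independent claims: one about the perfectoid quotient by $(p)_{\pfd}$, and one about the quotient by the $p^\infty$-torsion.

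\emph{The quotient $\overline{R}\to\overline{R'}$.} Since $(p)_{\pfd}=\sqrt{pR}$ is a perfectoid ideal, \Cref{pfdideals_along_fflatextension} identifies $\overline{R}\widehat{\otimes}^L_R R'$ with $R'/(pR')_{\pfd}=\overline{R'}$; in particular this base change is discrete. For any derived $p$-complete $\overline{R}$-module $M$ we then have
\[M\widehat{\otimes}^L_{\overline{R}}\overline{R'}\simeq M\widehat{\otimes}^L_R R'.\]
The right-hand side is discrete (resp.\ vanishes only if $M=0$) because $R\to R'$ is $p$-completely (faithfully) flat. Hence $\overline{R}\to\overline{R'}$ is $p$-completely (faithfully) flat.

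\emph{The torsion-free part $T\to T'$.} The same argument works once we show that $T\widehat{\otimes}^L_R R'\simeq T'$. Use the fiber sequence coming from $0\to R[p^\infty]\to R\to T\to 0$. Because $R$ is perfectoid, $R[p^\infty]=R[p]=R[\sqrt{pR}]$ by \cite[Section 2.1.3]{cesnavicius2023purityflatcohomology}, so the torsion is bounded. By \Cref{CompBaseChange}(1), the base change of $R[p^\infty]$ is then the discrete module $R[p^\infty]\otimes_R R'$, and the base change of $T$ is its classical $p$-completion $(T\otimes_R R')^{\wedge}$.

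Next apply \Cref{torsion_and_flatness} to $T\to T\widehat{\otimes}^L_R R'$, which is a $p$-completely flat map obtained by base change from $R\to R'$. Since $T$ is $p$-torsion-free, this shows $T\widehat{\otimes}_R R'$ is $p$-torsion-free. It is also perfectoid by \cite[Proposition 8.13]{bhatt2022Prismsa}, being a base change of perfectoid rings.

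It remains to identify $T\widehat{\otimes}_RR'$ with $T'$. We have a surjection $R'\to T\widehat{\otimes}_R R'$ whose kernel is the image of $R[p]\otimes_R R'$. That kernel is killed by $p$, so it lies in $R'[p^\infty]$. Conversely, the target is $p$-torsion-free, so the kernel must contain $R'[p^\infty]$. Hence $T\widehat{\otimes}_R R'=R'/R'[p^\infty]=T'$.

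With this identification in hand, the argument from the first part applies verbatim. For a derived $p$-complete $T$-module $M$ we get $M\widehat{\otimes}^L_T T'\simeq M\widehat{\otimes}^L_R R'$, which is discrete (resp.\ faithful). This proves that $T\to T'$ is $p$-completely (faithfully) flat.

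The main obstacle is this last identification $T\widehat{\otimes}^L_RR'\simeq T'$. It needs both the discreteness of the base change and the equality of the torsion ideals, and both rest on the boundedness $R[p^\infty]=R[p]$.
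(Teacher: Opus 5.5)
Your proposal is correct and is essentially the paper's argument: you identify $T\widehat{\otimes}^L_R R'\simeq T'$ and $\overline{R}\widehat{\otimes}^L_R R'\simeq \overline{R'}$ using \Cref{CompBaseChange}(1), \Cref{torsion_and_flatness} and \Cref{pfdideals_along_fflatextension}, and you pin down the kernel of $R'\to T\widehat{\otimes}^L_R R'$ by the two inclusions with $R'[p^{\infty}]$. Two small phrasing points: in the $T$ part, $p$-complete (faithful) flatness only needs to be tested on $T$-modules $M$ killed by $p$, for which $M\widehat{\otimes}^L_T T'\simeq M\otimes^L_R R'$ is discrete by hypothesis, rather than on arbitrary derived $p$-complete $M$, where discreteness is not directly given by the hypothesis; and the equality $(\sqrt{pR}\,R')_{\pfd}=(pR')_{\pfd}$ that you use implicitly deserves the one-line justification that perfectoid rings are reduced, so killing $\sqrt{pR}$ and killing $p$ have the same initial perfectoid $R'$-algebra.
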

\begin{proof}

It suffices to show that the derived $p$-complete base change of $R\to R'$ along $T\widehat{\otimes}^{L}_R (-)$ and $\overline{R}\widehat{\otimes}^{L}_R (-)$ are respectively $T\to T'$ and $\overline{R}\to \overline{R'}$.

First, note that as $T$ is perfectoid and $\overline{R}$ is $p$-torsion, both are of bounded $p^{\infty}$-torsion. Therefore by \Cref{CompBaseChange} (1) both base change are discrete. Now using \Cref{pfdideals_along_fflatextension} the kernel of the surjective maps 
\[R'\to T\widehat{\otimes}^{L}_R R' \quad R'\to \overline{R}\widehat{\otimes}^{L}_R R'\]
    are the $p$-adic closures of $R[p^{\infty}]R'$ and $\sqrt{pR}R'$. Note that $T\widehat{\otimes}^{L}_R R'$ is $p$-torsion free by \Cref{torsion_and_flatness}. Therefore, this implies that $R'[p^{\infty}]\subset \overline{R[p^{\infty}]R'}$. But note also that $R[p^{\infty}]R'\subset R'[p^{\infty}]$. As $R'[p^{\infty}]$ is $p$-adically closed (for example, the quotient by this ideal is perfectoid) we can conclude $\overline{R[p^{\infty}]R'}=R'[p^{\infty}]$ as desired. For $\overline{\sqrt{pR}R'}$ a similar argument holds: namely, as the quotient is perfect in characteristic $p$, we know that $\sqrt{pR'}\subset \overline{\sqrt{pR}R'}$. But also $\sqrt{pR}R'\subset \sqrt{pR'}$, which concludes as above.

\end{proof}

\subsection{Tameness of torsion in perfectoid rings and perfectoidizations}\label{tame_torsion_section}

We now prove \Cref{thm_torsion_intro}. This shows that torsion in perfectoid rings satisfies a strong tameness property.

    


\begin{thm}\label{torsion_is_almostzero}Let $R$ be a perfectoid ring and let \(I\) be an ideal of \(R\). Then the $I$-torsion in $R$ is $I_{\pfd}$-almost zero.
    
\end{thm}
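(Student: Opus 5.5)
The plan is to test the vanishing of \(I_{\pfd}\cdot R[I]\) on the canonical \(p\)-complete arc cover of \Cref{arc_remarks} (2), and to split the test into two cases according to whether \(I\) dies in the given valuation ring or not.

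\textbf{Step 1 (reduction to valuation rings).} Since \(R\) is perfectoid, \(R=R_{\pfd}\) and the identity map is initial among maps to perfectoid rings. Consider the \(p\)-complete arc cover
\[R\to \prod_{v\in \Spa(R,R)_{\leq 1}}V_v\]
of \Cref{arc_remarks} (2), whose target is perfectoid. By \Cref{archull} this map is injective. Fix \(x\in R[I]\) and \(y\in I_{\pfd}\). It then suffices to show that \(yx\) maps to \(0\) in each \(V_v\).

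\textbf{Step 2 (case analysis at a valuation).} Fix \(v\) and write \(\phi\colon R\to V_v\), where \(V_v\) is a perfectoid valuation ring and in particular a domain.

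First suppose \(\phi(I)\neq 0\). Choose \(i\in I\) with \(\phi(i)\neq 0\). Since \(ix=0\), we get \(\phi(i)\phi(x)=0\) in the domain \(V_v\), so \(\phi(x)=0\) and hence \(\phi(yx)=0\).

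Now suppose \(\phi(I)=0\). Then \(\phi\) factors through \(R/I\). Since \(V_v\) is perfectoid, the universal property in \Cref{def_pfdization_ideal} shows that \(\phi\) factors further through \((R/I)_{\pfd}=R/I_{\pfd}\). Hence \(\phi(y)=0\), and again \(\phi(yx)=0\).

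\textbf{Step 3 (conclusion).} In both cases \(yx\) vanishes in every factor, so \(yx=0\) by the injectivity from Step 1. Therefore \(I_{\pfd}\cdot R[I]=0\), which says exactly that \(R[I]\) is \(I_{\pfd}\)-almost zero.

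The only step needing care is Step 1. There we must check that \Cref{archull} applies with \(R_{\pfd}=R\): \(R\) is \(p\)-complete, and the identity is trivially initial among maps to perfectoid rings. We must also check that the product of the \(V_v\) is perfectoid, which holds because a product of perfectoid rings is perfectoid.

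Alternatively, one could first pass to an absolutely integrally closed \(R'\) via André's lemma. This would use \Cref{fflat_classpcomplete} for injectivity of \(R\to R'\) and the inclusion \(I_{\pfd}R'\subset (IR')_{\pfd}\). The direct valuative argument above avoids this reduction, so I would not take that route.
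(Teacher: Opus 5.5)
Your proof is correct, but it takes a different route from the paper's.

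The paper's route is flat-descent. It uses André's lemma to get a $p$-completely faithfully flat map $R\to R'$ to a perfectoid ring in which every $d\in I$ has a compatible system of $p$-power roots. It identifies $(IR')_{\pfd}$ with the $p$-adic closure of the ideal generated by all the $d^{1/p^n}$, and uses reducedness of $R'$ to get $d^{1/p^n}x=0$. It then descends using $I_{\pfd}\subset (IR')_{\pfd}$ (\Cref{pfdideals_along_fflatextension}) and injectivity of $R\to R'$ (\Cref{fflat_classpcomplete}).

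You instead argue pointwise on the canonical cover $R\hookrightarrow \prod_v V_v$. Each kernel $\ker(R\to V_v)$ is prime, so it contains either $I$ (hence $I_{\pfd}$, by the universal property of $(R/I)_{\pfd}$) or all of $R[I]$. Thus $I_{\pfd}\cdot R[I]$ lies in the intersection of these kernels, which is zero by \Cref{archull}. In effect you combine the valuative description $I_{\pfd}=\bigcap_{x\in \Spa(R/I,R/I)_{\leq 1}}\supp(x)$ from the remark following \Cref{arc_remarks} with primality of supports.

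Your argument is shorter and needs only two inputs: arc descent for perfectoid rings (injectivity into the product) and the universal property of $(R/I)_{\pfd}$. It avoids André's lemma, $p$-complete flat base change, and the explicit description of perfectoidizations of ideals generated by elements with $p$-power roots. The paper's argument instead exhibits the annihilation through explicit elements after a flat cover, in keeping with the toolkit developed in \Cref{tech_pcom_subsec}. For this statement that machinery is not needed.
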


\begin{proof}
Fix an element $x\in R$ such that $dx=0$ for any \(d \in I\). We will show that \(\alpha x = 0\) for any \(\alpha \in I_{\pfd}\).
By André's lemma \cite[Theorem 7.14]{bhatt2022Prismsa} we take $R\to R'$ a $p$-completely faithfully flat extension by a perfectoid ring with a compatible sequence $(d^{1/p^n})_{n\in \N}$ of \(p\)-power roots of all elements $d$ of \(I\) in $R'$.
By \Cref{Pfdizationideal_generated_by_p_power roots} the perfectoidization of $IR'$ is given by the $p$-adic closure of the ideal $\sum_{d \in I}\sum_{n\in \N}d^{1/p^n}R'$. 
Now because $R'$ is reduced we have that $d^{1/p^n}x=0$ for any $n\in \N$, see the discussion at the end of \cite[Section 2.1.3]{cesnavicius2023purityflatcohomology}. As a consequence, for any $\alpha\in (IR')_{\pfd}$ we have $\alpha x=0$. In particular, for any $\alpha\in I_{\pfd}\subset R$ we have $\alpha x=0$ in $R'$ since \(I_{\pfd}\) is contained in \((IR')_{\pfd}\) by \Cref{pfdideals_along_fflatextension}.
The injectivity of $R\to R'$, seen in  \Cref{fflat_classpcomplete} now concludes.
\end{proof}

As an application of \Cref{torsion_is_almostzero}, we propose an alternative proof to \cite[Proposition 4.7]{Dine_2024} -- the proof below is a bit different in nature, this one using perfectoidization of ideals instead of the topological closure and the $p$-saturation.

\begin{proposition}
    \label{tilt_untilt_domain} Let $R$ be a perfectoid ring. Then $R$ is a domain if and only if $R^{\flat}$ is. 
\end{proposition}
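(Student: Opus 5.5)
We treat the two implications separately, using \Cref{torsion_is_almostzero} for the harder direction ($R^{\flat}$ domain $\Rightarrow$ $R$ domain).

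\emph{From $R$ to $R^{\flat}$.} Suppose $R$ is a domain and $a,b\in R^{\flat}$ satisfy $ab=0$. Viewing $R^{\flat}=\varprojlim_{x\mapsto x^p}R$, write $a=(a_n)$ and $b=(b_n)$. Then $a_nb_n=0$ for every $n$. If $a\neq 0$, some $a_m\neq 0$, and since $a_n^{p^{n-m}}=a_m$ for $n\ge m$, we get $a_n\neq 0$ for all $n\ge m$. As $R$ is a domain, $b_n=0$ for all $n\ge m$, and hence $b_k=b_n^{p^{n-k}}=0$ for every $k$. Thus $b=0$. We also need $R^{\flat}\neq 0$. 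This holds because $R\neq 0$, so $R^{\flat}/\varpi^{\flat}\cong R/\varpi$ is nonzero: $\varpi$ is topologically nilpotent and hence lies in the Jacobson radical, so it is not a unit.

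\emph{From $R^{\flat}$ to $R$.} Suppose $R^{\flat}$ is a domain, and let $x,d\in R$ with $dx=0$. We must show $d=0$ or $x=0$. Take $I=(d)$, so $x\in R[I]$. By \Cref{torsion_is_almostzero}, $\alpha x=0$ for every $\alpha\in (d)_{\pfd}$. By \Cref{pfd_approximation} there are $d_n\in R^{\flat}$ with $\sharp(d_n)\in(d)_{\pfd}$ and $d=\sum_n\sharp(d_n)p^n$. Hence $\sharp(d_n)^{1/p^k}x=0$ for all $n,k$, where $\sharp(d_n^{1/p^k})$ is the obvious root. If $d\neq 0$, then some $d_n\neq 0$. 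So it suffices to prove the following claim: if $c\in R^{\flat}$ is nonzero and $\sharp(c^{1/p^k})x=0$ for all $k$, then $x=0$. The same argument applied with the roles of $x$ and $d$ swapped then handles the symmetric situation.

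\emph{The key step: transferring $x$ to the tilt.} I expect this to be the main obstacle, because $x$ itself need not have $p$-power roots. I would first apply \Cref{Roxane}/\Cref{pfd_approximation} to $x$ as well. Since $(x)_{\pfd}$ is generated, up to $p$-adic closure, by elements $\sharp(e_m)^{1/p^\infty}$ with $e_m\in R^{\flat}$ and $x=\sum_m\sharp(e_m)p^m$, it is enough to prove that $c\,e_m=0$ in $R^{\flat}$ whenever $x\neq 0$ leads to a contradiction. To get there, I would show that the annihilator $J$ of $\{\sharp(c^{1/p^k})\}_k$ is a perfectoid ideal. Indeed, $J$ is $p$-closed and radical, and by \Cref{torsion_is_almostzero} applied to $I=\overline{(\sharp(c)^{1/p^\infty})}$, which is perfectoid by \Cref{Roxane}, $J$ is also the annihilator of that perfectoid ideal. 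I would then aim to show $J$ is perfectoid using a reduced-quotient argument: $R/J$ should embed into a product of perfectoid rings. Granting this, \Cref{Roxane} (2) gives $J=\theta(W(J^{\flat}))$. Now $x\in J$ and $J^{\flat}\cdot c^{1/p^k}=0$ in $R^{\flat}$, because the components of the products vanish. Since $c\neq 0$ and $R^{\flat}$ is a domain, $J^{\flat}=0$, so $J=0$ and therefore $x=0$. If proving that $J$ is perfectoid turns out to be hard, my fallback is to use $(x)_{\pfd}\subset J$, which follows from the claim that $J$ is closed under $(\cdot)_{\pfd}$ via \Cref{pfd_ideals_sum} (2). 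This gives $\sharp(e_m)\sharp(c)=0$ and hence $\sharp(e_mc)=0$. A nonzero element of a perfect domain $R^{\flat}$ with $\sharp=0$ on all its $p$-power roots must be $0$, which forces $e_mc=0$, so $e_m=0$ for all $m$, and hence $x=0$.
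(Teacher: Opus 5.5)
Your argument is correct in substance, and it takes a different route from the paper. The step you single out as the main obstacle is in fact immediate. However, your justification of the fallback is circular as written. \Cref{pfd_ideals_sum} (2) only converts ``$J$ is closed under $(\cdot)_{\pfd}$'' into ``$J$ is perfectoid'' for $p$-closed $J$; it proves neither. The missing input is one more application of \Cref{torsion_is_almostzero}, with the roles swapped.

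If $y\in J$, then each $\sharp(c^{1/p^k})$ is $y$-torsion, so it is killed by $(y)_{\pfd}$; that is, $(y)_{\pfd}\subset J$.
\begin{itemize}
\item Taking $y=x$ gives your fallback inclusion $(x)_{\pfd}\subset J$ directly.
\item For general $y$, together with $p$-closedness and \Cref{pfd_ideals_sum} (2), it shows that $J$ is perfectoid. This is exactly the argument of \Cref{bounded_d_infinity_torsion}, which uses only earlier results, so no reduced-quotient embedding is needed for your main line.
\end{itemize}
In the fallback you also need $\sharp((e_mc)^{1/p^j})=0$ for every $j$, not just $j=0$. This holds because $(x)_{\pfd}$ is radical, so $\sharp(e_m^{1/p^j})\in (x)_{\pfd}\subset J$, and $J$ kills $\sharp(c^{1/p^j})$. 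An element of $R^{\flat}$ all of whose $p$-power roots have $\sharp=0$ is zero; the domain hypothesis is only used afterwards, to get $e_m=0$ from $e_mc=0$.

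\textbf{The paper's route.} It applies \Cref{torsion_is_almostzero} twice to $xy=0$ to get $(x)_{\pfd}(y)_{\pfd}=0$. It then rewrites this as an intersection via \Cref{intersection_product_pfdization} and tilts the ideals. Since $R^{\flat}$ is a domain, one of $(x)_{\pfd}^{\flat}$, $(y)_{\pfd}^{\flat}$ vanishes. The ideal is then recovered from its tilt via $I=\theta(W(I^{\flat}))$ from \Cref{Roxane} (2).

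\textbf{Your route.} Your main line via ``$J$ perfectoid'' is the same mechanism, with \Cref{Roxane} (2) applied to $J$ instead of $(x)_{\pfd}$. Your cleaned-up fallback is an elementwise version: you use the expansions of \Cref{pfd_approximation} for both factors. Their pieces, and all $p$-power roots of those pieces, lie in the respective perfectoidized ideals. So $(d)_{\pfd}(x)_{\pfd}=0$ forces $d_ne_m=0$ in $R^{\flat}$ for all $n,m$, and the domain property kills all pieces of one factor.

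\textbf{Comparison.} Your route avoids \Cref{intersection_product_pfdization} (hence one use of arc descent) and the formalism of tilting ideals, at the cost of expansion bookkeeping. Once you apply the theorem symmetrically from the start, the auxiliary element $c$ and the ideal $J$ become unnecessary.
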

\begin{proof}
    If $R$ is domain, the $R^{\flat}$ is also using its description as compatible sequences of $p$-power roots. Now suppose that $R^{\flat}$ is a domain. Let $x,y\in R$ with $xy=0$. By \Cref{torsion_is_almostzero}, we get that $0=(x)_{\pfd}(y)_{\pfd}=(x)_{\pfd}\cap (y)_{\pfd}$ where the second equality is from \Cref{intersection_product_pfdization}. Now we get that $0=(x)_{\pfd}^{\flat}\cap (y)_{\pfd}^{\flat}$ because $(-)^{\flat}$ conserves intersection, see \Cref{titl_ideal_def}. Now, because $(x)_{\pfd}^{\flat}$ and $(y)_{\pfd}^{\flat}$ are radical ideals in a perfect ring, we have $(x)_{\pfd}^{\flat}(y)_{\pfd}^{\flat}=(x)_{\pfd}^{\flat}\cap (y)_{\pfd}^{\flat}=0$. Therefore, if we suppose that $(y)_{\pfd}^{\flat}$ is not zero, then as $R^{\flat}$ is a domain, we get that $(x)_{\pfd}^{\flat}=0$. Now, it follows using \Cref{Roxane} (2), that 
    \begin{equation*}
        (x)\subset (x)_{\pfd}=\theta\left( W\left(((x)_{\pfd})^{\flat}\right) \right)=0
    \end{equation*}
    concluding.
\end{proof}

As a corollary of \Cref{torsion_is_almostzero} we also prove the following which studies more in detail torsion phenomena in perfectoid rings. This is well known when the ideal $I=(d)$ is principal and $d=p$, see for example \cite[Section 2.1.2]{cesnavicius2023purityflatcohomology}.



    


\begin{proposition}\label{bounded_d_infinity_torsion}
Let $R$ be a perfectoid ring and $I\subset R$ an ideal. Then for all $J\subseteq I_{\pfd}$ such that $J_{\pfd}=I_{\pfd}$ then 
\[R[I]=R[J].\]
In particular, note that we can take $J=I^n$ for any $n\in \N$.
Moreover, $R[I]$ is a perfectoid ideal.
    
\end{proposition}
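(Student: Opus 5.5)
First, the equality $R[I]=R[J]$. Note the hypothesis $J\subseteq I_{\pfd}$ does not require $J\subseteq I$, so two inclusions are needed. For $R[I]\subseteq R[J]$: by \Cref{torsion_is_almostzero}, every $x\in R[I]$ is killed by $I_{\pfd}$, and $J\subseteq I_{\pfd}$ gives $Jx=0$. For $R[J]\subseteq R[I]$: apply \Cref{torsion_is_almostzero} to $J$ instead of $I$. Every $x\in R[J]$ is killed by $J_{\pfd}=I_{\pfd}\supseteq I$, so $x\in R[I]$. Both directions also give $R[I]=R[I_{\pfd}]$. The example $J=I^n$ works because $I^n\subseteq I\subseteq I_{\pfd}$. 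Moreover, iterating \Cref{intersection_product_pfdization} gives $(I^n)_{\pfd}=I_{\pfd}\cap\dots\cap I_{\pfd}=I_{\pfd}$.

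For the perfectoidness of $R[I]$, we may replace $I$ by the perfectoid ideal $P\defeq I_{\pfd}$, since $R[I]=R[P]$. The aim is to identify $R[P]$ with a perfectoid ideal. By \Cref{Roxane} (3), write $P=\overline{(s^{1/p^\infty})_{s\in S}}$ for a set $S$ of elements admitting compatible $p$-power roots, so that $R[P]=\bigcap_{s\in S}R[s]$. This holds because annihilating the $p$-adic closure is the same as annihilating the generators: $R$ is $p$-adically separated and multiplication is continuous. Since $R$ is reduced, $R[s]=R[s^{1/p^n}]$ for all $n$. By \Cref{intersection_product_pfdization} and the remark after it, arbitrary intersections of perfectoid ideals are perfectoid. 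So it suffices to treat a single element $s$ that has compatible $p$-power roots.

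For such $s$, the candidate is $R[s]=R[(s)_{\pfd}]$, where by \Cref{Pfdizationideal_generated_by_p_power roots} $(s)_{\pfd}=\overline{(s^{1/p^\infty})}$. I would use criterion (2) of \Cref{pfd_ideals_sum}. First, $R[s]$ is $p$-adically closed: it is the kernel of the continuous map $R\xrightarrow{s}R$ into the $p$-adically separated ring $R$. Next, take $y\in R[s]$; I must show $(y)_{\pfd}\subseteq R[s]$. Since $sy=0$, applying \Cref{torsion_is_almostzero} to the ideal $(y)$ gives $(y)_{\pfd}\,s=0$, because $s\in R[(y)]$. Hence $(y)_{\pfd}\subseteq R[s]$. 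Then \Cref{pfd_ideals_sum} (2) shows $R[s]$ is perfectoid.

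This argument does not actually use that $s$ has $p$-power roots. Applied directly to each $i\in I$, it shows $R[i]$ is perfectoid, and then $R[I]=\bigcap_{i\in I}R[i]$ is perfectoid as an intersection of perfectoid ideals, so the reduction step can be skipped. The step needing the most care is the use of \Cref{pfd_ideals_sum} (2), namely checking its closedness hypothesis and the symmetric application of \Cref{torsion_is_almostzero}. An alternative route is to verify perfectoidness of $R/R[s]$ via \Cref{PerfectoidCheckFFdescent} after André's lemma, but the ideal-theoretic criterion should suffice.
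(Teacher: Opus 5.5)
Your proposal is correct and follows the paper's proof. The equality $R[I]=R[J]$ comes from applying \Cref{torsion_is_almostzero} in both directions, and perfectoidness comes from the $p$-adic closedness of the annihilator plus \Cref{pfd_ideals_sum} (2), with \Cref{torsion_is_almostzero} applied to the principal ideal $(y)$ so that the roles of the torsion element and the ideal are swapped. The only difference is that the paper applies this criterion directly to $R[I]$: for $y\in R[I]$, every $d\in I$ lies in $R[(y)]$, so $(y)_{\pfd}d=0$. Your reduction to $I_{\pfd}$ and your detour through the principal annihilators $R[i]$ and arbitrary intersections of perfectoid ideals are therefore unnecessary, though harmless.
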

\begin{proof}Say $x\in R$ is such that $xd=0$ for all \(d \in I\). Then $x$ is $d$-torsion implying that for every $d'\in J \subseteq J_{\pfd} = I_{\pfd}$ we have $xd'=0$ by \Cref{torsion_is_almostzero}. Now the symmetry of the situation concludes for the claimed equality $R[I]=R[J].$

Now, note that if $(x_n)_{n\in \N}$ is a sequence of elements in $R[I]$ converging in $R$ for the $p$-adic topology to an element $x_{\infty}\in R$, such that $x_nd=0$ for all \(d \in I\), then $x_{\infty}d=0$ by continuity of the multiplication. This shows that $R[I]$ is a $p$-adically closed ideal. Also if $x\in R[I]$, then $xd=0$ for any \(d \in I\). So $d$ is $x$-torsion. As a consequence, $(x)_{\pfd}d=0$ by \Cref{torsion_is_almostzero}, implying that $(x)_{\pfd}\subset R[I]$. Now, it follows that the ideal is a perfectoid ideal using \Cref{pfd_ideals_sum} (2).
    
\end{proof}

Also analogous to the situation for the principal ideal $(p)$ (see \cite[Section 2.1.3]{cesnavicius2023purityflatcohomology}) we get the following corollary of \Cref{bounded_d_infinity_torsion}, using arc descent. This is \Cref{thm_decomposition_intro} in the introduction.

\begin{thm}\label{dtorsion_decomposition}Let $R$ be a perfectoid ring and $I\subseteq R$ an ideal. Then the following consequences hold.
\begin{enumerate}
    \item The map $R\to R/R[I]$ is a surjection to a perfectoid ring which is inital with respect to maps from $R$ to $I$-torsion-free perfectoid $R$-algebras. Moreover this map is a $I_{\pfd}$-almost isomorphism.
    \item We have a pullback square
    \[\begin{tikzcd}
R \arrow[d] \arrow[r]  & R/R[I] \arrow[d]                 \\
R/I_{\pfd} \arrow[r] & {R/(R[I],I_{\pfd})}
\end{tikzcd}\]
In other words we have a canonical decomposition of the ring $R$
\[R\to R/R[I]\times_{R/(R[I],I_{\pfd})}R/I_{\pfd}\]
into a $I$-torsion-free perfectoid ring and a $I$-torsion perfectoid ring.
\end{enumerate}
    
\end{thm}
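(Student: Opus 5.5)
The plan is to deduce both parts from \Cref{bounded_d_infinity_torsion}, \Cref{torsion_is_almostzero}, and the arc-descent results for perfectoid ideals from \Cref{pfdideals_section}.

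\textbf{Part (1).} By \Cref{bounded_d_infinity_torsion}, $R[I]$ is a perfectoid ideal, so $R/R[I]$ is perfectoid.

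\emph{The quotient is $I$-torsion-free.} Let $\bar y\in R/R[I]$ with $I\bar y=0$, and lift it to $y\in R$. Then $dy\in R[I]$ for all $d\in I$, so $d d' y=0$ for all $d,d'\in I$, i.e.\ $y\in R[I^2]$. By \Cref{bounded_d_infinity_torsion} with $J=I^2$ we have $R[I^2]=R[I]$, hence $\bar y=0$.

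\emph{Universality.} If $R\to S$ is a map to an $I$-torsion-free perfectoid ring, then any $x\in R[I]$ maps to an element killed by $IS$, hence to $0$. So the map factors (uniquely, since $R\to R/R[I]$ is surjective) through $R/R[I]$.

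\emph{Almost isomorphism.} The kernel $R[I]$ is $I_{\pfd}$-almost zero by \Cref{torsion_is_almostzero}, and the cokernel is zero. So $R\to R/R[I]$ is an $I_{\pfd}$-almost isomorphism.

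\textbf{Part (2).} The square is the standard square $R/(A\cap B)\to R/A\times_{R/(A+B)}R/B$ with $A=R[I]$ and $B=I_{\pfd}$. For any ideals this map is an isomorphism: it is injective by definition, and it is surjective because if $a\equiv b \bmod A+B$, say $a-b=\alpha+\beta$ with $\alpha\in A$, $\beta\in B$, then $r=a-\alpha=b+\beta$ is a common lift. It therefore suffices to show $R[I]\cap I_{\pfd}=0$.

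If $x\in R[I]\cap I_{\pfd}$, then by \Cref{torsion_is_almostzero} $x$ is killed by $I_{\pfd}$, and $x\in I_{\pfd}$ itself, so $x^2=0$. Since $R$ is reduced (\Cref{rad_and_p_comp}), $x=0$.

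This gives the decomposition. The corner $R/(R[I],I_{\pfd})$ is perfectoid, and no $p$-closure issues arise: by \Cref{pfd_ideals_sum}~(3) the finite sum of the perfectoid ideals $R[I]$ and $I_{\pfd}$ is again perfectoid. The factor $R/I_{\pfd}$ is $I$-torsion in the sense that $I$ maps to zero in it.

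\textbf{Main obstacle.} The only delicate step is the $I$-torsion-freeness of $R/R[I]$, which needs the equality $R[I^2]=R[I]$. That equality is exactly the input supplied by \Cref{bounded_d_infinity_torsion}.
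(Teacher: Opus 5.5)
Your proof is correct. Part (1) is the paper's argument with the details filled in. The paper only says it follows from \Cref{bounded_d_infinity_torsion}, and your use of $R[I^2]=R[I]$ (the case $J=I^2$ there) is exactly what makes $R/R[I]$ $I$-torsion-free.

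For part (2) you take a genuinely different route. The paper applies the arc-excision square of \Cref{812prisms} to the surjection $R\to R/R[I]$, which is an isomorphism away from $I$. It then needs \Cref{pfd_ideals_sum}~(3) to identify the corner $(R/(R[I]+I))_{\pfd}$ with $R/(R[I],I_{\pfd})$.

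You instead use the elementary fact that $R\to R/A\times_{R/(A+B)}R/B$ is always surjective with kernel $A\cap B$. You then prove $R[I]\cap I_{\pfd}=0$ directly: such an $x$ satisfies $x^2=0$ by \Cref{torsion_is_almostzero}, hence $x=0$ because $R$ is reduced.

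Your version is more elementary. The only nontrivial inputs are \Cref{torsion_is_almostzero} and \Cref{bounded_d_infinity_torsion}, and no arc descent is needed for the isomorphism itself. Perfectoidness of the sum ideal enters only in your side remark that the corner is perfectoid. It also makes the ring-theoretic content explicit: $R\hookrightarrow R/R[I]\times R/I_{\pfd}$.

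The paper's route produces the square directly as a derived pullback of $E_\infty$-rings. It also stays within the excision framework that is reused in \Cref{I_tors and I_tors_free existence}. Nothing is lost in your version, though: all maps in the square are surjective, so the classical fiber product you compute agrees with the derived one.
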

\begin{proof}
    The first point follows entirely from \Cref{bounded_d_infinity_torsion}. As for the second point it is a direct application of \Cref{812prisms} for the map $R\to R/R[I]$ which is a $I_{\pfd}$-almost isomorphism, in particular an isomorphism away from \(I\), using that $(R[I],I_{\pfd})$ is a perfectoid ideal by \Cref{pfd_ideals_sum} (3).
\end{proof}

\subsection{Application to existence of perfectoidizations}\label{exist_pfd_sec}

We use \Cref{dtorsion_decomposition} and the arc topology to deduce the following theorem about existence of perfectoidization.

\begin{proposition}\label{I_tors and I_tors_free existence} Let $S$ be a ring and $I\subset S$ an ideal. Then the following are equivalent.

\begin{enumerate}
    \item There is a perfectoid $S$-algebra $S\to S_{\pfd}$ which is initial with respect to maps from $S$ to perfectoid rings.
    \item There are perfectoid $S$-algebras $S\to S_{I\tf}$ and $S\to S_{I\tors}$ such that $S\to S_{I\tf}$ is initial with respect to maps from $S$ to $I$-torsion-free perfectoid algebras, and $S\to S_{I\tors}$ is initial with respect to maps from $S$ to $I$-torsion perfectoid algebras.
\end{enumerate}
    
\end{proposition}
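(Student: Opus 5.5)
The two implications are handled separately. For $(1)\Rightarrow(2)$, assume $S\to S_{\pfd}$ exists and set $R\defeq S_{\pfd}$. The candidates are
\[S_{I\tf}\defeq R/R[IR], \qquad S_{I\tors}\defeq R/(IR)_{\pfd}.\]
By \Cref{dtorsion_decomposition} (1), applied to the ideal $IR\subset R$, the ring $R/R[IR]$ is perfectoid and $IR$-torsion-free. It is also initial among perfectoid $R$-algebras without $IR$-torsion. Any map from $S$ to an $I$-torsion-free perfectoid ring factors uniquely through $R$, and $I$-torsion-freeness over $S$ coincides with $IR$-torsion-freeness over $R$. So the universal property of $R$ together with that of $R/R[IR]$ gives the first universal property.

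For $S_{I\tors}$ I interpret an ``$I$-torsion perfectoid algebra'' as one in which $I$ becomes zero, matching the decomposition in \Cref{dtorsion_decomposition}. Then $R/(IR)_{\pfd}$ is initial among perfectoid $R$-algebras killing $IR$ by \Cref{def_pfdization_ideal}. Composing with the universal property of $R$ gives the second universal property.

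For $(2)\Rightarrow(1)$, the candidate is the fibre product
\[P\defeq S_{I\tf}\times_{(S_{I\tf}/IS_{I\tf})_{\pfd}} S_{I\tors}.\]
First, $S_{I\tors}$ is a perfectoid ring killing $I$, so $S_{I\tf}\to (S_{I\tf}/I)_{\pfd}$ should factor through it. Indeed, by \Cref{Pfdizationideal_generated_by_p_power roots}, $(S_{I\tf}/I)_{\pfd}$ is a perfectoid $S$-algebra killing $I$, so the universal property of $S_{I\tors}$ gives a map $S_{I\tors}\to (S_{I\tf}/I)_{\pfd}$. Both maps into $(S_{I\tf}/I)_{\pfd}$ are surjective, and a fibre product of perfectoid rings along surjections is perfectoid (this is used in \Cref{pfd_ideals_sum} (3) via \cite[Proposition 2.1.11]{cesnavicius2023purityflatcohomology}). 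Hence $P$ is perfectoid, and it receives a map from $S$.

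To check that $P$ is initial, take $S\to T$ with $T$ perfectoid. By \Cref{dtorsion_decomposition} (2), applied to $T$ and the ideal $IT$,
\[T\cong T/T[IT]\times_{T/(T[IT],(IT)_{\pfd})}T/(IT)_{\pfd}.\]
The first factor is $I$-torsion-free perfectoid and the second kills $I$. The hypotheses therefore give unique maps $S_{I\tf}\to T/T[IT]$ and $S_{I\tors}\to T/(IT)_{\pfd}$. It remains to see that they agree on $(S_{I\tf}/I)_{\pfd}$. The map $S_{I\tf}\to T/T[IT]\to T/(T[IT],(IT)_{\pfd})$ kills $I$, so it factors through $(S_{I\tf}/I)_{\pfd}$. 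The two resulting composites $S_{I\tors}\to T/(T[IT],(IT)_{\pfd})$ then agree by uniqueness in the universal property of $S_{I\tors}$, since the target is perfectoid by \Cref{pfd_ideals_sum} (3) and kills $I$. This produces $P\to T$, and uniqueness follows from uniqueness on each factor. By \Cref{pfdization_def}, $P=S_{\pfd}$.

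The main obstacle is that $(S_{I\tf}/I)_{\pfd}$ has to exist as a discrete perfectoid ring. The saving point is that $S_{I\tf}$ is already perfectoid, so the perfectoidization of its quotient is given by \Cref{def_pfdization_ideal}. The other point to handle carefully is that the two ring maps into the gluing ring genuinely coincide; this comes from the uniqueness clause of the universal properties as above, not merely from their existence.
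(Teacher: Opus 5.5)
\paragraph{Comparison with the paper.} Your $(1)\Rightarrow(2)$ is the paper's argument. For $(2)\Rightarrow(1)$ you take a genuinely different route. The paper applies arc-excision (\Cref{812prisms}) to $S\to S/S[I^{\infty}]$. It identifies the corners of the resulting $E_\infty$-pullback with $S_{I\tf}$, $S_{I\tors}$ and $S_{I\tf}/(IS_{I\tf})_{\pfd}$, deduces that $S_{\pfd}$ is discrete and perfectoid via \cite[Proposition 2.1.4]{cesnavicius2023purityflatcohomology}, and gets initiality for free from the limit formula in \Cref{pfdization_def}.

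You instead build $P=S_{I\tf}\times_Q S_{I\tors}$, with $Q=(S_{I\tf}/IS_{I\tf})_{\pfd}$, and test it against every perfectoid $T$ using \Cref{dtorsion_decomposition} for $T$. This never needs the arc-cohomology of the arbitrary ring $S$, nor the identifications of $(S/S[I^{\infty}])_{\pfd}$ and $(S/(I,S[I^{\infty}]))_{\pfd}$. Your construction of $P\to T$ is correct, including the compatibility over $T/(T[IT],(IT)_{\pfd})$ obtained from uniqueness for $S_{I\tors}$. The price of this route is that initiality must be checked by hand, and that is where the gap is.

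\paragraph{The gap.} ``Uniqueness follows from uniqueness on each factor'' does not follow as stated. The universal properties only control $S$-maps \emph{out of} $S_{I\tf}$ and $S_{I\tors}$. For an arbitrary $S$-map $h\colon P\to T$, you must first show that $h_1\colon P\to T_1=T/T[IT]$ factors through $\mathrm{pr}_1\colon P\to S_{I\tf}$, and that $h_2\colon P\to T_2=T/(IT)_{\pfd}$ factors through $\mathrm{pr}_2\colon P\to S_{I\tors}$.

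A priori $h_1$ only factors through $\mathrm{pr}_1(P)$. This is all of $S_{I\tf}$ exactly when $S_{I\tors}\to Q$ is surjective, which you asserted without proof. At that stage only $S_{I\tf}\to Q$ is known to be surjective, and that is also all the paper has when it invokes the fibre-product result.

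\paragraph{How to repair it.} Both points can be fixed once $P$ is known to be perfectoid.

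First, $\ker(\mathrm{pr}_1)=0\times\ker(S_{I\tors}\to Q)$ is killed by $I$, while $\mathrm{pr}_1(P[IP])\subseteq S_{I\tf}[I]=0$. Hence $\ker(\mathrm{pr}_1)=P[IP]$. Now compose the universal $S$-map $S_{I\tf}\to P/P[IP]$ (from \Cref{dtorsion_decomposition}~(1) applied to $P$) with the injection $P/P[IP]\hookrightarrow S_{I\tf}$. The composite is the identity by uniqueness. So $\mathrm{pr}_1$ is surjective, $P/P[IP]\cong S_{I\tf}$, and $h_1=f_1\circ\mathrm{pr}_1$.

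Second, $\ker(\mathrm{pr}_2)=K\times 0$ with $K=(IS_{I\tf})_{\pfd}$. The ring $P/((IP)_{\pfd}+P[IP])\cong S_{I\tf}/\mathrm{pr}_1((IP)_{\pfd})$ is perfectoid by \Cref{bounded_d_infinity_torsion} and \Cref{pfd_ideals_sum}~(3), and it kills $I$. Therefore $\mathrm{pr}_1((IP)_{\pfd})\supseteq K$, so each $k\in K$ lifts to some $(k,b)\in(IP)_{\pfd}$. Then $(k^2,0)=(k,b)(k,0)\in(IP)_{\pfd}$, and $(k,0)\in(IP)_{\pfd}$ because perfectoid ideals are radical (\Cref{rad_and_p_comp}). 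Since $h_2$ kills $(IP)_{\pfd}$, it follows that $h_2=f_2\circ\mathrm{pr}_2$.

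Injectivity of $T\to T_1\times T_2$ then gives uniqueness, and a posteriori the surjectivity of $S_{I\tors}\to Q$ you claimed.

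\paragraph{Minor.} The existence of $Q$ as a perfectoid ring is \Cref{def_pfdization_ideal}, not the corollary on ideals generated by elements with $p$-power roots. The perfectoidness of $P$ is the fibre-product result the paper cites as \cite[Proposition 2.1.4]{cesnavicius2023purityflatcohomology}; it does not come from \Cref{pfd_ideals_sum}~(3).
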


\begin{proof} If (1), holds, then \Cref{dtorsion_decomposition}, concludes using respectively
\begin{equation*}
    S_{\pfd}/S_{\pfd}[IS_{\pfd}] \quad \text{and} \quad S_{\pfd}/(IS_{\pfd})_{\pfd}.
\end{equation*}

For the converse, note that
\[\begin{tikzcd}
S \arrow[d] \arrow[r] & {S/S[I^{\infty}]} \arrow[d] \\
S/IS \arrow[r]        & {S/(I,S[I^{\infty}])}     
\end{tikzcd}\]
becomes a pullback square as in \Cref{812prisms} because $S\to S/I \times S/S[I^{\infty}]$ is an arc-cover. But note that, using the end of \Cref{pfdization_def} and our hypothesis $(S/I)_{\pfd}=S_{I\tors}$ and $(S/S[I^{\infty}])_{\pfd}=S_{I\tf}$. Also, using the universal property, we see that $(S/(I,S[I^{\infty}]))_{\pfd}=S_{I\tf}/(IS_{\tf})_{\pfd}$ since the right hand side being the universal $I$-torsion perfectoid $S_{I\tf}$-algebra. Therefore we have a pullback square of $E_{\infty}$-rings
\[\begin{tikzcd}
S_{\pfd} \arrow[d] \arrow[r] & S_{I\tf} \arrow[d]         \\
S_{I\tors} \arrow[r]         & S_{I\tf}/(IS_{\tf})_{\pfd}
\end{tikzcd}\]
But as the three terms except the top left are discrete in this pullback, we conclude that the same goes for $S_{\pfd}$. Now, using \cite[Proposition 2.1.4]{cesnavicius2023purityflatcohomology} that the pullback $S_{\pfd}$ is perfectoid. By the end of \Cref{pfdization_def}, we deduce that $S_{\pfd}$ is an initial perfectoid $S$-algebra.
\end{proof}

\begin{rem}\label{I=p existence of pfd} In \Cref{I_tors and I_tors_free existence}, if $I\supset pS$ then the existence of $S_{I\tors}$ is free: indeed, it is perfection of the characteristic $p$-algebra $(S/I)_{\perf}$. As a corollary, for a ring $S$, there is an initial perfectoid $S$-algebra if and only there is a $p$-torsion-free perfectoid $S$-algebra.
    
\end{rem}

As an application of \Cref{I=p existence of pfd}, we prove the following decomposition of the perfectoidization.
This originally belongs to the present work. However, as it was needed in \cite[Proposition 2.7]{ishizuka2025Graded}, a paper co-authored by the first-named author, it was first stated there without proof, announcing that its proof would be provided in a future joint work by the present authors. We now give the complete proof.

\begin{cor} 
\label{DealingWithTorsion}
Let $R$ be a ring, and suppose that there exist an initial map to a perfectoid ring $R\to R_{\pfd}$. Denote $R'=R/R[p^{\infty}]$. 
Then \((R')_{\pfd}\) exists and the natural map to a fiber product of discrete rings
\[R_{\pfd}\to (R')_{\pfd}\times_{\overline{(R')_{\pfd}}}(R/pR)_{\perf}\]
is an isomorphism, where $\overline{(-)}$ denotes the reduced modulo $p$-fiber.
\end{cor}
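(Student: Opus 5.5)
We plan to deduce the statement from \Cref{I_tors and I_tors_free existence} and \Cref{dtorsion_decomposition} with $I=pR$; we only need to identify the two pieces. Write $P\defeq R_{\pfd}$, a discrete perfectoid ring. Applying \Cref{dtorsion_decomposition} to $P$ and the ideal $pP$, and noting that $(pP)_{\pfd}=\sqrt{pP}$ (the quotient $P/\sqrt{pP}$ is perfect, reduced and initial among characteristic $p$ perfect quotients), we obtain a pullback square
\[P\cong P/P[p]\times_{P/(P[p],\sqrt{pP})}P/\sqrt{pP}.\]
By \Cref{bounded_d_infinity_torsion}, $P[p]=P[p^n]$ for all $n$, so $P[p]=P[p^\infty]$ and $P/P[p]$ is $p$-torsion-free.

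\textbf{Identifying the $p$-torsion part.} We claim $P/\sqrt{pP}\cong (R/pR)_{\perf}$. A map from $R$ to a perfect $\F_p$-algebra $A$ factors uniquely through $P$, since $A$ is perfectoid. It then kills $p$, hence factors uniquely through $P/\sqrt{pP}$, because $A$ is reduced. Conversely, $(R/pR)_{\perf}$ is perfectoid, so it receives a map from $P$. Comparing the two universal properties gives the identification.

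\textbf{Identifying the $p$-torsion-free part.} This is the step needing the most care. We must show that $R\to P\to P/P[p^\infty]$ is initial among maps from $R'=R/R[p^\infty]$ to perfectoid rings. First, this map factors through $R'$: an element $r\in R$ with $p^n r=0$ maps to an element of $P[p^n]$, which is killed in $P/P[p^\infty]$.

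Next, let $R'\to S$ be a map to a perfectoid ring $S$, and decompose $S\cong T\times_{\overline T}\overline S$ as in \cite[Section 2.1.3]{cesnavicius2023purityflatcohomology}. The composite $R\to P\to S$ sends $P[p^\infty]$ into $S[p^\infty]$ but not obviously to zero, so we argue by descent. Set $S_1\defeq S/S[p^\infty]=T$ and $S_2\defeq S/\sqrt{pS}$, and apply the universal property of $P$ to $R\to S_1$ and to $R\to S_2$.

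For $S_1$: since $S_1$ is $p$-torsion-free, the map $P\to S_1$ kills $P[p^\infty]$.

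For $S_2$: the map $P\to S_2$ factors through $P/\sqrt{pP}$. We need the induced map on $P/P[p^\infty]$ to land compatibly in the fiber product, and this is where $R'$ enters. The map $R'\to S_2$ kills the image of $R[p^\infty]$; we hope to show it also kills $P[p^\infty]$ mod $\sqrt{pP}$. Our first attempt is to note that $P[p^\infty]$ is a perfectoid ideal with $p$-torsion quotient structure, and to compute $P[p^\infty]+\sqrt{pP}$ via \Cref{pfd_ideals_sum}(3).

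Since handling $S_2$ directly seems tricky, we propose as the fallback the arc-topological route. Observe that $R\to R'\times R/pR$ is a $p$-complete arc cover which is an isomorphism away from $p$, so \Cref{812prisms}-style excision gives a pullback of $E_\infty$-rings
\[R_{\pfd}\simeq R'_{\pfd}\times_{(R'/p)_{\pfd}}(R/p)_{\pfd}.\]
We would then read off the discreteness of $R'_{\pfd}$ from the identification $R'_{\pfd}=\varprojlim_{R'\to S'}S'$ together with the fact that it is a retract-like piece of $P$. Specifically, $R'_{\pfd}$ should be the $p$-torsion-free universal object, i.e.\ $S_{p\tf}$, which exists by \Cref{I_tors and I_tors_free existence} applied with $I=pR$ because $R_{\pfd}$ exists. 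Since perfectoid $p$-torsion-free rings admit maps from $R$ exactly when they admit maps from $R'$, we get $S_{p\tf}=(R')_{\pfd}$.

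Finally, $\overline{(R')_{\pfd}}$ is $R'_{\pfd}/(p)_{\pfd}$, which matches the corner $P/(P[p^\infty],\sqrt{pP})$. Substituting these identifications into the pullback square above yields the claimed isomorphism.

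The main obstacle is making sure that $S_{p\tf}$ agrees with $(R')_{\pfd}$. This requires checking that every map $R\to S$ to a $p$-torsion-free perfectoid ring kills $R[p^\infty]$, which is immediate.
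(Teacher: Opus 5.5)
Your proposal has a genuine gap at the identification $S_{p\tf}=(R')_{\pfd}$. Both your existence claim for $(R')_{\pfd}$ and your identification of $\overline{(R')_{\pfd}}$ with the corner $P/(P[p^{\infty}],\sqrt{pP})$ rest on it. The observation that $p$-torsion-free perfectoid rings receive maps from $R$ exactly when they receive maps from $R'$ only shows that $R'\to P/P[p^{\infty}]$ is initial among \emph{$p$-torsion-free} perfectoid $R'$-algebras. But $(R')_{\pfd}$ must be initial among \emph{all} perfectoid $R'$-algebras, for instance also $(R'/pR')_{\perf}$. The two coincide if and only if $(R')_{\pfd}$ is $p$-torsion-free, i.e.\ if and only if $P[p^{\infty}]$ dies in every perfectoid $R'$-algebra. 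This is exactly where your first attempt (the factor $S/\sqrt{pS}$) stalled. The check you call immediate in your last sentence is only the trivial direction. The excision square does not supply this either. It gives $P\to R'_{\pfd}$ surjective with kernel contained in $P[p]$, and equality of that kernel with $P[p]$ is again the same statement.

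The missing idea is \cite[Lemma A.2]{ma2022Analogue} (see \Cref{pfd_is_p_torsionfree}): a $p$-torsion-free ring with discrete perfectoidization has $p$-torsion-free perfectoidization. This is not formal from universal properties. Since $p$ lies in no minimal prime of $R'$, the rank one valuations with $p$ in their support can be discarded from the cover $R'\to\prod_x V_x$, so $(R')_{\pfd}$ embeds into a product of $p$-torsion-free valuation rings. The paper obtains $(R')_{\pfd}$ from \Cref{I_tors and I_tors_free existence} applied to $R'$. It then uses this lemma to get $(R')_{\pfd}=S_{p\tf}$ for $R$, and applies \Cref{I_tors and I_tors_free existence} to $R$.

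Alternatively, your excision square does prove the statement as written without torsion-freeness, if argued differently. First, the long exact sequence of the pullback shows $R'_{\pfd}$ is discrete. Next, $P\to R'_{\pfd}$ is surjective because $(R/pR)_{\perf}\to(R'/pR')_{\perf}$ is. Writing $R'_{\pfd}$ as the limit of all perfectoid $R'$-algebras, its kernel is an intersection of kernels of maps between perfectoid rings, hence a perfectoid ideal. So $R'_{\pfd}$ is perfectoid and initial. Finally, the corner $(R'/pR')_{\perf}$ is identified with $\overline{(R')_{\pfd}}$ by the same universal-property argument you used for $P/\sqrt{pP}$, not with $P/(P[p^{\infty}],\sqrt{pP})$. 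As written, your proposal does neither.
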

\begin{proof}
    The existence of \((R')_{\pfd}\) as a discrete ring comes from \Cref{I_tors and I_tors_free existence} since \(R' \to R_{\pfd}/R_{\pfd}[p^\infty]\) is initial among maps to \(p\)-torsion-free perfectoid \(R'\)-algebras.
    Moreover, \((R')_{\pfd}\) is a \(p\)-torsion-free perfectoid ring by \cite[Lemma A.2]{ma2022Analogue} (see also \Cref{pfd_is_p_torsionfree}) and the map \(R \to (R')_{\pfd}\) is initial among \(p\)-torsion-free perfectoid \(R\)-algebras.
    Therefore, \Cref{I_tors and I_tors_free existence} says that \(R_{\pfd}\) can be identified with the map \((R')_{\pfd}\times_{\overline{(R')_{\pfd}}}(R/pR)_{\perf}\).
    

    
\end{proof}

As an application of \Cref{DealingWithTorsion}, we compute the perfectoidization of semiperfectoids of the form $R/(pr)$ when $R/(r)$ and $R$ are $p$-torsion-free.
\begin{proposition}\label{ptorsion_semipfd_typical}Let $R$ be a perfectoid ring and $r\in R$ such that $R/(r)$ is a $p$-torsion-free ring. Then the natural map
\[R/(pr)\to R/(r)_{\pfd}\times_{\overline{R}/(\overline{r}^{1/p^{\infty}})}\overline{R}\]
is the perfectoidization map. Here $\overline{R}$ denotes the perfection (reduction) of $R/pR$ and $(\overline{r}^{1/p^{\infty}})$ denotes the radical of the ideal generated by the image of $r$ in $\overline{R}$.
\end{proposition}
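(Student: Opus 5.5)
Set $S\defeq R/(pr)$; it is semiperfectoid, hence admits a discrete initial perfectoidization. We identify $S_{\pfd}$ with the fiber product via \Cref{DealingWithTorsion} applied to $S$, which gives
\[S_{\pfd}\cong (S')_{\pfd}\times_{\overline{(S')_{\pfd}}}(S/pS)_{\perf},\qquad S'\defeq S/S[p^{\infty}].\]
Two steps remain: compute the $p$-torsion-free part $S'$ and its perfectoidization, and compute the characteristic-$p$ part.

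\textbf{Characteristic $p$ part.} We have $S/pS=R/(p,pr)=R/pR$. Hence $(S/pS)_{\perf}=(R/pR)_{\perf}=\overline{R}$, using that perfection equals reduction since $R/pR$ is semiperfect.

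\textbf{Torsion-free part.} We claim $S[p^{\infty}]=(r)/(pr)$, so that $S'=R/(r)$. One inclusion is clear: $p\cdot r=0$ in $S$. Conversely, let $x\in R$ satisfy $p^nx\in (pr)\subset (r)$. Since $R/(r)$ is $p$-torsion-free, $x\in(r)$. Thus $S'=R/(r)$, and $(S')_{\pfd}=R/(r)_{\pfd}$. This ring is $p$-torsion-free by \Cref{pfd_is_p_torsionfree}, since $R/(r)$ is $p$-torsion-free and its perfectoidization is discrete. Alternatively, one can read it off from the proof of \Cref{DealingWithTorsion}.

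\textbf{Reduced mod-$p$ fiber.} It remains to identify the reduced mod-$p$ fiber of $R/(r)_{\pfd}$, namely $(R/((r)_{\pfd},p))_{\red}$, with $\overline{R}/(\overline{r}^{1/p^{\infty}})$. Both are the perfection of $R/(p,r)$: the first is the initial perfect $\mathbb{F}_p$-algebra under $R/(r)_{\pfd}$, hence under $R/(p,r)$ by universality of $(r)_{\pfd}$; the second is $(R/(p,r))_{\perf}$. Concretely, both equal $R/\sqrt{(p,r)}$, since $(r)_{\pfd}\subset\sqrt{(p,r)}$ modulo $p$ (perfectoid ideals being radical and contained in any radical ideal with perfect quotient containing $r$) and $r\in(r)_{\pfd}$.

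\textbf{Compatibility of maps.} Finally we check that the maps in the fiber product are the natural ones coming from $S\to R/(r)_{\pfd}$ and $S\to\overline{R}$, so that the composite $S\to$ fiber product is the perfectoidization map.

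The main obstacle is the identification $S[p^\infty]=(r)/(pr)$ together with the claim that the $p$-torsion-free part of $S_{\pfd}$ is exactly $(S')_{\pfd}$. Both follow directly from \Cref{DealingWithTorsion}. Everything else is bookkeeping of radicals modulo $p$.
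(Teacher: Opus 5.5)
Your proposal is correct and follows essentially the paper's route: apply \Cref{DealingWithTorsion} to $R/(pr)$ and identify its $p$-torsion-free part with $R/(r)_{\pfd}$, using \Cref{pfd_is_p_torsionfree} (or \cite[Lemma A.2]{ma2022Analogue}) for $p$-torsion-freeness. Your explicit computation $S[p^{\infty}]=(r)/(pr)$ is a concrete version of the paper's universal-property argument (that $r$ is $p$-torsion, hence killed in any $p$-torsion-free perfectoid algebra), and you also spell out the identification of the base of the fiber product with $R/\sqrt{(p,r)}$, which the paper leaves implicit.
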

\begin{proof}
    Using \Cref{DealingWithTorsion}, we need to show that the $p$-torsion-free perfectoidization is $R/(r)_{\pfd}$. First, note that $r$ is of $p$-torsion in $R/(pr)$. So $r$ is zero in the $p$-torsion-free perfectoidization. Therefore, note that if $R/(pr)\to S$ is a map to $p$-torsion-free perfectoid ring $S$, then has $r$ is sent to zero we see that it factors uniquely through $R/(r)_{\pfd}$. Note that has $R/(r)$ is $p$-torsion-free by assumption, then $R/(r)_{\pfd}$ is also $p$-torsion-free by \cite[Lemma A.2]{ma2022Analogue} -- see also \Cref{pfd_is_p_torsionfree}. This observation concludes.
\end{proof}

\begin{example}
\label{TorsionExample}

We will use \Cref{DealingWithTorsion} to compute the perfectoidization of $R\abracket{x^{1/p^{\infty}}}/(px)$.


We want to compute the initial $p$-torsion-free perfectoid ring $R'$ with a map $R\abracket{x^{1/p^{\infty}}}/(px)\to R'$.
As the image of $x$ is $p$-torsion in $R\abracket{x^{1/p^{\infty}}}/(px)$ and that every perfectoid ring is reduced \cite[Subsection 2.1.3]{cesnavicius2023purityflatcohomology}, we see that the quotient map $R\abracket{x^{1/p^{\infty}}}/(px)\to R$ evaluating every $x^{1/p^n}$ at zero for $n\geq 0$ has to factorize the $p$-torsion-free perfectoidization $R'$. But as this quotient is already perfectoid and $p$-torsion-free by assumption, this shows that this map \(R\abracket{x^{1/p^\infty}}/(px) \to R\) is the initial map to $p$-torsion-free perfectoid rings.

Using \Cref{DealingWithTorsion}, we can conclude that the perfectoidization of the semiperfectoid ring $R\abracket{x^{1/p^{\infty}}}/(px)$ is identified to the map
\[R\abracket{x^{1/p^{\infty}}}/(px)\to R\times_{R/\sqrt{p R}}R/\sqrt{p R}[x^{1/p^{\infty}}],\]
that sends the class in the quotient of a power series $f\in R\abracket{x^{1/p^{\infty}}}$ to $(f(0),\overline{f})$ where $\overline{f}$ denotes the reduced modulo $p$ quotient.

Therefore we deduce that (see \Cref{intersection_product_pfdization})
\[(px)_{\pfd}=(p)_{\pfd}\cap(x)_{\pfd},\]
the kernel of the natural map from $R\abracket{x^{1/p^{\infty}}}$ to the product $R\times R/\sqrt{p R}[x^{1/p^{\infty}}]$.
In words, this ideal $(px)_{\pfd}$ consists of power series in $R\abracket{x^{1/p^{\infty}}}$ such that every coefficient of the power series is in $\sqrt{p R}$ and the constant coefficient is equal to zero.
\end{example}

As an application we also get a general formula for the perfectoidization of a semi-perfectoid ring, using \Cref{semipfdproot}. The second-named author uses this result in another paper (\cite[Corollary 4.12]{ishizuka2025Graded}) in which the proof crucially relies on \Cref{DealingWithTorsion}. So for completeness, we give the proof as follows.

\begin{proposition}\label{semipfd_gen_computation}
Let \(S\) be a ring whose \(p\)-adic completion \(\widehat{S}\) is a semiperfectoid ring.
Let $\widetilde{S}$ be the $p$-completion of the $p$-root closure of $S$ in $S[\frac{1}{p}]$. Then the perfectoidization identifies to
\[S\to \widetilde{S}\times_{(\widetilde{S}/p\widetilde{S})_{\red}}(S/pS)_{\red}.\]  
\end{proposition}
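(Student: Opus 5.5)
The plan is to reduce to \Cref{DealingWithTorsion} by splitting off the \(p\)-torsion of \(S\), and then to identify both factors of the resulting fiber product. Since \(S\) and its \(p\)-completion \(\widehat{S}\) have the same \(p\)-complete arc site (\Cref{arc_top_def}), they have the same perfectoidization. As \(\widehat{S}\) is semiperfectoid, its perfectoidization is a discrete perfectoid ring, and it is initial among maps to perfectoid rings (\cite[Corollary 7.3]{bhatt2022Prismsa}, \Cref{pfdization_def}). So \(S\to S_{\pfd}\) exists and is initial. Set \(S'=S/S[p^{\infty}]\). Then \Cref{DealingWithTorsion} gives
\[S_{\pfd}\cong (S')_{\pfd}\times_{\overline{(S')_{\pfd}}}(S/pS)_{\perf},\]
and we must match this with the claimed formula.

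The first identification is of the \(p\)-torsion factor. The \(p\)-adic completion of \(S/pS\) is semiperfect, because it is a quotient of \(\widehat{S}\) and \(\widehat{S}/p\widehat S= S/pS\). A reduced semiperfect ring of characteristic \(p\) is perfect, so \((S/pS)_{\perf}=(S/pS)_{\red}\). The same argument gives \(\overline{(S')_{\pfd}}=((S')_{\pfd}/p)_{\red}\).

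The second identification is of the \(p\)-torsion-free factor, via \Cref{semipfdproot} applied to \(S'\). The ring \(S'\) is \(p\)-torsion-free. Its \(p\)-adic completion is a quotient of \(\widehat{S}\), since \(S\to S'\) is surjective, so it is again semiperfectoid. This last point should be checked carefully: one needs \(\widehat{S'}\) to be \(p\)-complete and a quotient of a perfectoid ring, and the surjection \(\widehat S\to\widehat{S'}\) together with \Cref{intrisic_semipfd} (2) handles this. Then \((S')_{\pfd}\) is the \(p\)-completion of the \(p\)-root closure of \(S'\) in \(S'[1/p]\). Since \(S[1/p]=S'[1/p]\), and the \(p\)-root closure of \(S\) in \(S[1/p]\) only depends on the image of \(S\) in \(S[1/p]\), which is \(S'\), this closure is \(\widetilde{S}\). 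Hence \((S')_{\pfd}=\widetilde{S}\). Combined with the first identification, this gives \(\overline{(S')_{\pfd}}=(\widetilde{S}/p\widetilde{S})_{\red}\), and the displayed fiber product becomes exactly \(\widetilde{S}\times_{(\widetilde{S}/p\widetilde{S})_{\red}}(S/pS)_{\red}\).

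Finally, the maps should be checked to be the natural ones: \(S\to\widetilde{S}\) factors through \(S'\), and \(S\to (S/pS)_{\red}\) is the reduction map. The main obstacle I expect is the subtle interpretation of the \(p\)-root closure when \(S\) has \(p\)-torsion, namely that "the \(p\)-root closure of \(S\) in \(S[1/p]\)" means that of the image \(S'\). A secondary point is to confirm that \(\widehat{S'}\) is semiperfectoid, so that \Cref{semipfdproot} applies; the concern is \(p\)-adic completion of a quotient possibly failing to preserve properties, but right exactness of classical completion for surjections of these rings suffices.
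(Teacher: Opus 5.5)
Your proposal is correct and follows the paper's proof. As in the paper, you pass to $S'=S/S[p^{\infty}]$, note that $\widehat{S'}$ is semiperfectoid as a quotient of $\widehat{S}$, identify $(S')_{\pfd}=\widetilde{S}$ via \Cref{semipfdproot} (using that $S'$ is the image of $S$ in $S[1/p]$), and conclude with \Cref{DealingWithTorsion}; your extra checks (existence of an initial $S\to S_{\pfd}$, and $(S/pS)_{\perf}=(S/pS)_{\red}$ by semiperfectness) only make explicit what the paper leaves implicit.
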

\begin{proof}
    Now $S'=S/S[p^{\infty}]$ is a $p$-torsion-free and its \(p\)-adic completion \(\widehat{S'}\) is a quotient of a semiperfectoid ring, therefore \(\widehat{S'}\) is a semiperfectoid ring. 
    Since the image of \(S\) in \(S[1/p]\) is equal to \(S'\), the taken \(\widetilde{S}\) is the \(p\)-adic completion of the \(p\)-root closure of \(S'\).
    Therefore, the perfectoidization of $S'$ is $\widetilde{S}$ by \Cref{semipfdproot}. But as any map from $S$ to a $p$-torsion-free ring uniquely factorizes through $S\to S'$, we conclude by \Cref{DealingWithTorsion}.
\end{proof}

\end{document}